\newcommand{\ee}{{\bf e}}
\newcommand{\ha}{\textstyle{\frac{1}{2}}\displaystyle}
\newcommand{\qu}{\textstyle{\frac{1}{4}}\displaystyle}
\newcommand{\RR}{\mathbb R}
\newcommand{\ii}{{\rm i}}
\newcommand{\sF}{{\scriptscriptstyle F}} 
\newcommand{\sP}{{\scriptscriptstyle P}}
\newcommand{\sV}{{\scriptscriptstyle V}} 
\newcommand{\sT}{{\scriptscriptstyle T}}
\newcommand{\sC}{{\scriptscriptstyle C}}
\newtheorem{theorem}{Theorem}[section]
\newtheorem*{prop*}{Proposition}
\newtheorem{prop}[theorem]{Proposition}
\newtheorem{lemma}[theorem]{Lemma}
\newtheorem*{slemma}{\bf Separating $2$-jet Lemma}
\newtheorem{coGspecial}[theorem]{}
\title{Contact with circles and Euclidean invariants of smooth surfaces in $\RR^3$}
\author{Peter Giblin \and Graham Reeve \and Ricardo Uribe-Vargas}
\date{}
\begin{document}

\theoremstyle{remark}
\newtheorem{rem}[theorem]{\bf Remark}
\newtheorem*{remark*}{\bf Remark}
\newtheorem{rems}[theorem]{\bf Remarks}

\theoremstyle{definition}
\newtheorem{defs}[theorem]{\bf Definition}
\newtheorem*{definition*}{\bf Definition}
\newtheorem{note}[theorem]{}

\maketitle

\abstract{We investigate the vertex curve, that is the set of points in the hyperbolic region of a smooth
surface in real 3-space at which there is a circle in the tangent plane having at least 5-point contact
with the surface. The vertex curve is related to the differential geometry of planar sections of the surface
parallel to and close to the tangent planes, and to the symmetry sets of isophote curves, that is
level sets of intensity in a 2-dimensional image. We investigate also the relationship of the vertex curve
with the parabolic and flecnodal curves, and the evolution of the vertex curve 
in a generic 1-parameter family of smooth surfaces. }

\medskip
\noindent
MR Classification: 57R45, 53A05, 58K05\\
Keywords: Vertex curve, Euclidean invariant, surface in real 3-space and 1-parameter family,
 contact with circles, cusp of Gauss (godron), flecnodal curve,  parabolic curve,
 flat umbilic.
\section{Introduction}
This article is a contribution to the study of Euclidean invariants of surfaces,
and generic families of surfaces, in Euclidean space $\RR^3$. There have been
many previous such studies, involving among others
 contact of surfaces with spheres (ridge
curves, see for example~\cite{Porteous,Mumford,BGT3}),  right circular cylinders~\cite{Fukui}, and, as in the present article, circles. In \cite{Bruce} Bruce, following on from earlier work of
Montaldi~\cite{Montaldi2}, considers the contact of circles with surfaces, but the problems studied are different from ours. Another approach
is given in Porteous's book~\cite[Ch.15]{Porteous}.

In \cite{diatta-giblin} Diatta and the first author studied  vertices and inflexions of sections of a smooth surface $M$ in
$\RR^3$ by planes parallel to, and close to, the tangent plane $T_pM$ at a point $p$.  
This was in the context of families of curves which have a singular member (namely the section of $M$ by 
the tangent plane itself) and the behaviour of the symmetry sets of the curves in
such a family. (The corresponding evolution of symmetry sets of a 1-parameter family
of {\em smooth} plane curves was classified in \cite{BGsymmsets}.)
 This in turn was motivated by the fact that isophotes (lines of equal intensity) in a camera
image can be regarded as level sets of a function of position in the image, namely the intensity function. The evolution of vertices on planar sections parallel to $T_pM$ changes when $p$ crosses a certain curve on $M$, first studied in \cite{diatta-giblin}, and 
which we call the vertex curve (V-curve) in this article. {\em Thus the V-curve is a Euclidean invariant bifurcation set on $M$.}

\medskip

We first recall that the sign of the Gauss curvature $K$ distinguishes three types of points of a generic surface\,: 
{\em elliptic region} ($K>0$), {\em hyperbolic region} ($K<0$) and {\em parabolic curve} ($K=0$);  
and that a {\em vertex} of a plane curve $\gamma$ is a point where $\gamma$ has higher order of contact 
than usual with its osculating circle (at least $4$-point contact). 
\textit{At a vertex the radius of curvature of $\gamma$ is critical.}

\medskip
 
\noindent
\textbf{\small {\color{black}Vertex curve.}}
Let $M$ be a smooth surface in $\RR^3$.  The \textit{vertex curve}, or \textit{V-curve}, 
on $M$ is the closure of the set of points $p$ in the hyperbolic region of $M$ for which there 
exists a circle, lying in the tangent plane $T_p{\color{black}M}$ to $M$ at $p$, having (at least) 5-point 
contact with $M$ at $p$. Such a point $p$ is also called a \textit{vertex point}, or \textit{V-point}, of $M$. (The V-curves in this article were called `VT-sets' in~\cite{diatta-giblin}).
\medskip

At a hyperbolic point $p\in M$ the tangent plane $T_pM$ {\color{black}cuts the surface along two smooth 
transverse branches. Thus,} for a circle lying in the tangent plane
$T_pM$, 5-point contact
with $M$ at $p$ can be expected to mean that the circle meets one branch transversally (one of the five contacts)
and the other branch at a vertex
of that branch (the other four contacts); compare~\cite[\S3.4(1)]{diatta-giblin}.  
{\color{black}W}hence the name V-curve.

We study the structure of the V-curve and its interactions with the parabolic and flecnodal curves for a generic smooth surface of $R^3$ 
and investigate the changes which occur on V-curves during a generic 1-parameter deformation of the underlying surface.

The article is organized as follows.
In \S\ref{s:contact} we give two complementary methods for measuring the contact between a
surface $M$ at $p\in M$ and a circle  lying in the tangent plane to $M$ at $p$.  In \S\ref{s:vt-hyper}
we show that the V-curve is smooth on the hyperbolic region of $M$, and in \S\ref{ss:maxmin} we
show how to distinguish between vertices which are maxima or minima of the absolute radius of curvature.
In \S\ref{s:coG} we study the V-curve near a special parabolic point of $M$, namely a `cusp of Gauss' or `godron'
(defined below), showing in Proposition~\ref{prop:VatCoG} that at any `hyperbolic cusp of Gauss'
the V-curve has two smooth branches tangent to the parabolic curve (it is empty near an `elliptic cusp of Gauss'). In \S\ref{sss:projective} we introduce a Euclidean invariant of a cusp of Gauss, defined in two geometric ways.
In \S\ref{s:flecCoG} we find the interactions between the
V-curve and the flecnodal curve of $M$; the various possibilities are illustrated in Figure~\ref{fig:CoG1}.
In \S\ref{s:cod1} we investigate, partly experimentally, the evolution of the V-curve 
in a generic 1-parameter family of surfaces and finally in \S\ref{s:further} we mention some further ongoing
work.

\section{Contact function and contact map}\label{s:contact}
Here,  we describe two alternative ways to calculate the contact between a circle 
and the surface $M$. For the majority of this article we adopt the `standard calculation' 
below, in which we parametrize the circle and use a local equation $z=f(x,y)$ for $M$.
But for some purposes in \S\ref{s:cod1} we have found another useful 
approach in \S\ref{ss:contact2}: we parametrize the surface and use two equations
 for the circle.  The general theory of contact is contained in~\cite{Montaldi}.
 
\subsection{Computing the contact by the contact map}\label{ss:standard}

We assume $M$ to be locally given in Monge form $z=f(x,y)$, where $f$ and its partial 
derivatives $f_x, f_y$ vanish at the origin $(0,0)$, so that the tangent plane to $M$ 
at the origin is the coordinate plane $z=0$. In this method, we calculate the contact by 
composing a parametrization of the circle with the equation $z=f(x,y)$. 
Consider a circle or line through the origin in, say, the $(x_1, y_1)$-plane, given by
\begin{equation}
r(x_1^2+y_1^2)+sx_1+y_1=0,  \mbox{ with curvature } \frac{2r}{\sqrt{1+s^2}} \mbox{ and centre } \left( -\frac{s}{2r}, \ -\frac{1}{2r}\right), r\ne 0.
\label{eq:circlex1y1}
\end{equation}
The fact that this can represent a line ($r=0$) will be useful later.

We shall map this circle isometrically to a circle in the tangent plane $T_p$ at $p\in M$ by choosing an orthonormal
basis for $\mathbb R^3$ as follows. Write $p=(x_0, y_0,f(x_0,y_0))$ and let $f_x, f_y$ stand for the
partial derivatives of $f$ at $x=x_0, y=y_0$.
\begin{equation}
\ee_1=\frac{(1+f_y^2, -f_xf_y, f_x)}{||(1+f_y^2, -f_xf_y, f_x)||}, \ \ee_2=\frac{(0, 1, f_y)}{||(0, 1, f_y)||}, \ \ee_3=\frac{(-f_x, -f_y, 1)}{||(-f_x, -f_y, 1)||}.
\label{eq:e1e2e3}
\end{equation}
Thus $\ee_1, \ee_2$ span the tangent plane at $p$ and $\ee_3$ is a unit normal to $M$ at $p$. For $p=(0,0)$ the
three vectors form the standard basis for $\mathbb R^3$. We map a point
$(x_1,y_1)$ of the circle (\ref{eq:circlex1y1}) to
\[ (X,Y,Z)=(x_0, y_0, f(x_0,y_0))+x_1\ee_1+y_1\ee_2,\]
which lies on an arbitrary circle through $p$, lying in the tangent plane to $M$ at $p$.
When $x_0=y_0=0$ the map takes the circle in the $(x_1,y_1)$-plane
 identically to the
same circle in the $(x,y)$-plane which is the tangent plane to $M$ at the origin.

We shall parametrize the circle (\ref{eq:circlex1y1}) by $x_1$ close to the origin in the $(x_1,y_1)$ plane;
then the {\em contact function} between the corresponding circle in $T_p$ and the surface $M$ is
\begin{equation}
G(x_1,x_0,y_0,r,s) = Z-f(X,Y),
\label{eq:contact-hyper}
\end{equation}
where on the right-hand side $y_1$ is written as a function of $x_1$.  The vertex curve is the locus of points
$(x_0,y_0,f(x_0,y_0))$ for which the contact is at least five, and we shall need to find this curve close to the origin
$(x_0,y_0)=(0,0)$. The contact is at least five provided the first four derivatives of $G$ with respect
to $x_1$ vanish at $(0,x_0,y_0,r,s)$.

An important observation is that $x_1^2$ is a {\em factor} of the function $G$, 
that is $G(0,x_0,y_0,r,s)\equiv 0, \ G_{x_1}(0,x_0,y_0,r,s) \equiv 0$.
This is because the circle (\ref{eq:circlex1y1}) {\em always} has at least 2-point 
contact with the surface for $x_1=0$,
at the point $(x_0,y_0,f(x_0,y_0))$, since it passes through the intersection of 
the two curves in which the surface is met by
its tangent plane (or, at a parabolic point, through the corresponding singularity of the intersection).
\begin{defs} {\rm The smooth function $H$ determined by the equality 
\[ G(x_1,x_0,y_0,r,s) = x_1^2H(x_1,x_0,y_0,r,s)\]
will be called the} reduced contact function.
\label{def:H}
\end{defs}
We can now re-interpret the conditions that the first four derivatives of $G$ with respect to $x_1$ vanish
at $x_1=0$ in terms of the function $H$, as follows.
\begin{eqnarray*}
G_{x_1}&=&2x_1H+x_1^2H_{x_1} \\
G_{x_1x_1} &=& 2H+4x_1H_{x_1}+x_1^2H_{x_1x_1} \\
G_{3x_1}&=& 6H_{x_1}+6x_1H_{x_1x_1}+x_1^2H_{3x_1} \\
G_{4x_1}&=& 12H_{x_1x_1}+8x_1H_{3x_1}+x_1^2H_{4x_1}.
\end{eqnarray*}
Thus we now require $H= H_{x_1}=H_{x_1x_1}=0$ at $x_1=0$, that is we consider the map
\begin{eqnarray}
 \widetilde{H} &:& (\RR^4, 0)\to (\RR^3,0),  \nonumber\\
(x_0,y_0,r,s) &\mapsto & (H(0,x_0,y_0,r,s), H_{x_1}(0,x_0,y_0,r,s), H_{x_1x_1}(0,x_0,y_0,r,s)).
\label{eq:Htilde}
\end{eqnarray}
{\color{black}The projection to the $(x_0,y_0)$-plane of $\widetilde{H}^{-1}(0,0,0)$ is the set of points 
on $M$, near the origin, at which there is a circle on the tangent plane having $5$-point contact 
or higher with the surface. }

\subsection{An alternative approach}\label{ss:contact2}

Instead of parametrizing the circle and using an equation $z=f(x,y)$ for the surface $M$ we can parametrize the surface by
$(x,y)\mapsto (x,y,f(x,y))$ and write down two equations for the circle. We can specify a plane, namely the tangent plane to
$M$ at a given point $P_0=(x_0,y_0,f(x_0,y_0))$, and a sphere centred at a point of this plane and passing through $P_0$.
This gives a contact  map $\RR^2\to\RR^2$, which we can reduce using contact equivalence
($\mathcal K$-equivalence).   We shall use this method in \S\ref{s:cod1} as it makes the direct computations
much easier. 

With the notation above, let $(u,v,w)$ be a point in the tangent plane to $M$ at $P_0$. The equation of this
tangent plane is $G_1(x_0,y_0;x,y,z)=0$, given by the inner product
\[{\color{black}G_1=\langle (x-x_0,y-y_0,z-f(x_0,y_0))\,,\,(-f_x,-f_y,1)\rangle =0\,,}\]
and the partial derivatives are evaluated at $P_0$.  
Thus $w=(u-x_0)f_x+(v-y_0)f_y+f(x_0,y_0).$ The equation of the sphere centred at 
$(u,v,w)$ and passing through $P_0$ is $G_2(x_0,y_0,u,v;x,y,z)=0$ where 
$G_2=(x-u)^2+(y-v)^2+(z-w)^2-(x_0-u)^2-(y_0-v)^2-(z_0-w)^2$, $w$ being substituted as above.
The intersection of this sphere with
the tangent plane is the circle whose contact with $M$ at $P_0$ we wish to calculate.

To calculate the contact we
must parametrize $M$ close to $P_0$.  Thus let $(x_0+p, y_0+q)$ be parameters for $M$, where $p$ and
$q$ are small. The contact map, with variables $p,q$ and for fixed $x_0,y_0,u,v$, is then the composite of the parametrization
\[ (x_0,y_0;p,q) \mapsto (x,y,z)=(x_0+p,y_0+q,f(x_0+p,y_0+q))\]
with the map
\[ (x_0,y_0,u,v;x,y,z) \mapsto \left(G_1(x_0,y_0;x,y,z), \ G_2(x_0,y_0,u,v;x,y,z)\right).\]
We shall call the components of this composite map $(H_1(x_0,y_0,p,q), H_2(x_0,y_0,u,v,p,q))$.  Note that
when we use a polynomial approximation to $f$ both $H_1$ and $H_2$ are {\em polynomial} functions.

For fixed
$x_0,y_0,u,v$ it is a map (germ) $H:\mathbb R^2,0\to \mathbb R^2,0$ and its $\mathcal K$-class is an
alternative way of measuring the contact between a circle in the tangent plane to $M$ and the
surface $M$.  The parametrization of the V-curve consists of those 
$x_0,y_0$ for which, for some $u,v$, this map has
the contact type $A_4$ or higher at $p=q=0$.

\section{{\color{black}Vertex curve properties in the hyperbolic domain}}\label{s:vt-hyper}

We start with some basic background. 
A generic smooth surface $M$ of $\RR^3$ has three (possibly empty) parts\,:  
($H$) an open {\em domain of hyperbolic points}\,: at such points there are 
two tangent lines having greater than $2$-point contact with $M$, called {\em asymptotic lines};
($E$) an open {\em domain of elliptic points}\,: at such points there is no such line; \ 
 and  ($P$) a smooth {\em curve of parabolic points}\,: at such points there is a unique (double) asymptotic 
line. 

If $M$ is generic and locally given in Monge form $z=f(x,y)$ around $p$, then $p$ is hyperbolic, elliptic 
or parabolic if and only of the quadratic part of $f$,
called {\em second fundamental form of $M$ at $p$},
is respectively indefinite, definite or degenerate. 
The zeroes of this quadratic form (for $p$ hyperbolic or parabolic) are the asymptotic tangent lines at $p$. 
\smallskip

The integral curves of the fields of asymptotic tangent lines are called {\em asymptotic curves}. 
\medskip

\noindent 
\textbf{\small Left and right}. Fix an orientation in $\RR^3$. 
A regularly parametrized smooth space curve is said to be a \textit{left} (\textit{right})
\textit{curve} on an interval if its first three derivatives at each point form a negative 
(resp. positive) frame. Thus a left (right) curve has negative (resp. positive) torsion 
and twists like a left (resp. right) screw.
\medskip


\noindent
\textbf{Fact}. 
\textit{At each hyperbolic point $p$ one asymptotic curve is left and the other is right} (cf. \cite{ricardo}).
\medskip

The respective tangents $L_\ell$, $L_r$, called \textit{left} and \textit{right asymptotic lines}, 
are tangent to the smooth branches of the section $M\cap T_pM$. We call them {\em left} 
and {\em right} branches respectively. 
\medskip

\textit{This left-right distinction depends only on the orientation of $\RR^3$, but not of the surface.} 
\medskip

\noindent 
\textbf{\small Left and right vertex curve}. 
The \textit{left} (\textit{right}) vertex curve $V_\ell$ (resp. $V_r$) of a surface $M$ consists of
the points $p$ for which the left (resp. right) branch of $M\cap T_pM$ has a vertex.

\medskip\noindent
We study the behaviour of the V-curve close to the parabolic curve in \S\ref{s:coG}.
For more information on the behaviour of asymptotic curves close to the
parabolic curve see for example~\cite[Ch.\ 3]{BGM}, \cite[Ch.\ 6]{Shyuichi-et-al}.

{\footnotesize 
\begin{remark*}
At an elliptic point $p\in M$ the intersection with the tangent plane consists of two complex conjugate curves.  
In principle one can ask whether a complex circle in that tangent plane could have 5-point 
contact with $M$ at $p$. A calculation shows that this imposes two conditions on the point $p$, 
which implies that it is only possible at isolated points of a generic surface $M$.  
The two conditions imposed on the (real) coefficients in the Monge form of the surface do not appear 
to have any other geometrical meaning.
\end{remark*}
}

\subsection{Smoothness of the vertex curve at a hyperbolic point}\label{ss:hyper}
{\color{black}We shall take a surface in local Monge form at a hyperbolic point $p$ so that one 
asymptotic tangent line at $p$ is the $x$-axis $y=0$ and the other one is the line $x=ay$\,:} 
\begin{equation}
f(x,y)=xy-ay^2+b_0x^3+b_1x^2y+b_2xy^2+b_3y^3+c_0x^4+c_1x^3y+c_2x^2y^2+c_3xy^3+c_4y^4 + d_0x^5+\ldots,
\label{eq:hyperbolic}
\end{equation}
where, if they are needed, the degree $5$ terms will have coefficients $d_0, \ldots, d_5$, and so on.

In what follows, we shall consider the asymptotic direction along the $x$-axis.

\begin{prop}
{\color{black}In a generic smooth surface $M$, each branch (the left $V_\ell$ and the right $V_r$) of the 
$V$-curve is nonsingular on the hyperbolic domain.}  
 \label{prop:singV}
\end{prop}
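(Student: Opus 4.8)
The plan is to collapse the three-equation description of the V-curve into a single defining equation in the base coordinates $(x_0,y_0)$ and then to verify that this equation cuts out a regular curve. First I would substitute the Monge form (\ref{eq:hyperbolic}) into the frame $\ee_1,\ee_2,\ee_3$ and into $G$, and read off the three functions $h_0=H(0,x_0,y_0,r,s)$, $h_1=H_{x_1}(0,x_0,y_0,r,s)$, $h_2=H_{x_1x_1}(0,x_0,y_0,r,s)$ that make up the map $\widetilde H$ of (\ref{eq:Htilde}) (this is the routine part). Geometrically $h_0=0$ says the circle is tangent to one branch of $M\cap T_pM$, $h_1=0$ adds that it osculates that branch, and $h_2=0$ is the vertex condition. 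Because at a hyperbolic point the two asymptotic directions are distinct, the tangency condition $h_0=0$ has two separate solutions for the tangent-direction parameter $s$ (one near $s=0$ for the $x$-axis branch, one near the other asymptotic slope), and this is exactly what splits $\widetilde H^{-1}(0)$ into the two sheets yielding $V_\ell$ and $V_r$. As announced before the statement, I would work out the $x$-axis branch, the other being handled by the second root.

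Next, on that sheet, I would solve the pair $h_0=0,\ h_1=0$ for $(s,r)$ as smooth functions of $(x_0,y_0)$ near the origin by the implicit function theorem. This requires the $2\times2$ Jacobian $\partial(h_0,h_1)/\partial(s,r)$ to be nonsingular at the point under study. Since the $x$-axis branch is a genuine regular plane curve with a well-defined tangent and a finite nonzero curvature at a hyperbolic point, varying $s$ alters the circle's tangent direction (so $\partial h_0/\partial s\neq0$) while varying $r$ alters its curvature once tangency holds (so $\partial h_1/\partial r\neq0$), and the Jacobian is essentially triangular and nonzero. Substituting the resulting $s(x_0,y_0),\ r(x_0,y_0)$ into the remaining equation gives a single smooth function $\Phi(x_0,y_0):=h_2\bigl(x_0,y_0,r(x_0,y_0),s(x_0,y_0)\bigr)$ whose zero set is the branch $V_r$, and likewise $\Phi_\ell$ for $V_\ell$.

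It then remains to show that $\Phi$ has nonvanishing gradient along $\{\Phi=0\}$, for then each branch is a nonsingular curve. I would expand everything as a power series in $x_0,y_0$ (and in $x_1$), keeping only the orders needed to extract the constant and linear terms of $\Phi$ in terms of the jet coefficients $a,b_i,c_i,d_i$ of (\ref{eq:hyperbolic}); the vanishing of the constant term marks the origin as a V-point, and the linear coefficients determine the tangent to the branch. One checks these linear coefficients are not both zero, so $\nabla\Phi\neq0$. If this holds for every hyperbolic point directly, smoothness on the whole hyperbolic domain follows at once; any residual degenerate configuration is excluded by genericity, since a singular point of $V_r$ would satisfy the three equations $\Phi=\Phi_{x_0}=\Phi_{y_0}=0$, which as conditions on the jet of $f$ at a point have codimension three, whereas the base point ranges over the two-dimensional surface, so a standard Thom transversality argument forbids such points on a generic $M$.

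The step I expect to be the real obstacle is the last one: isolating $\nabla\Phi$ and proving it is nonzero. The earlier reductions are bookkeeping once the frame and $G$ are expanded, but the gradient of $\Phi$ is buried by the two successive eliminations of $r$ and $s$, which interact, so the linear part of $\Phi$ emerges only after the relations $h_0=h_1=0$ are used to cancel the dominant contributions. I would therefore organize the computation around these relations, carry the expansion just far enough to read the linear coefficients, and confirm they do not both vanish; repeating the calculation with the second root of $h_0=0$ disposes of $V_\ell$.
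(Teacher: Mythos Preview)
Your plan is correct and essentially coincides with the paper's: both amount to showing that the $3\times4$ Jacobian of $\widetilde H$ has rank $3$ with independent $(r,s)$-columns, and both finish with a genericity/codimension count (the paper makes this explicit by observing that the two vanishing-minor conditions solve for $d_0$ and $d_1$). The only difference is packaging---you eliminate $(r,s)$ first via the implicit function theorem and then study $\nabla\Phi$, whereas the paper writes out the full Jacobian and its $3\times3$ minors directly---but the linear algebra is identical, and the triangular structure you anticipate for $\partial(h_0,h_1)/\partial(s,r)$ is exactly what the paper's explicit matrix displays.
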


\begin{proof}
Applying \S\ref{ss:standard} to $x_0=y_0=0$, we clearly need $s=0$ for the circle to be tangent to the
branch of $f=0$ tangent to the $x$-axis (for $f$ given in \eqref{eq:hyperbolic}). The contact function then becomes
\[ (r-b_0)x_1^3+(ar^2+b_1r-c_0)x_1^4+(r^3-b_2r^2+c_1r-d_0)x_1^5+ \ \mbox{higher terms},\]
so that for 4-point contact we need $r=b_0$ and for exactly 5-point contact we add
\begin{equation}
ab_0^2+b_1b_0-c_0=0, \ {\color{black}\mbox{ and }A\neq 0 \ \mbox{with } A:=b_0^3-b_2b_0^2+c_1b_0-d_0}\,.
\label{eq:Vatorigin}
\end{equation}
The {\color{black}condition $r=b_0$ ensures that} the circle osculates the branch of $f=0$ 
tangent to the $x$-axis, that is $b_0(x^2+y^2)+y=0$ is the equation, in $(x,y)$-coordinates 
in the plane $z=0$, of the osculating circle of this branch, with centre $(0,-\frac{1}{2b_0})$ 
and curvature $2b_0$.
{\color{black}The additional condition $ab_0^2+b_1b_0-c_0=0$ ensures} that the origin is a V-point, while  
{\color{black}the condition $A\neq 0$, with $A$ as given in $(6)$, ensures that the corresponding circle has
exactly $5$-point contact with the surface}.  

Referring to (\ref{eq:Htilde}), we need to study $\widetilde{H}^{-1}(0,0,0)$ and its 
Jacobian matrix at $(0,0,r_0,s_0)$ for suitable values of $r_0$ and $s_0$, that is for 
values which correspond to those for a circle which {\em does} have 5-point contact with 
the surface at the origin.  Of course this requires the origin on the surface $M$ to be 
a vertex point. As we shall see, for a hyperbolic point on the surface this gives a single 
condition on the point, meaning that vertex points generically lie on a curve on the surface. 
Thus for a smooth vertex curve---the locus of vertex points---at $p$ we require that
\begin{enumerate}
\item $\widetilde{H}(0,0,r_0,s_0)={\color{black}(0,0,0)}$ for some $r_0,s_0$; this is the same as (\ref{eq:Vatorigin}) above, that is
$r_0=b_0, s_0=0$ and $ab_0^2+b_1b_0-c_0=0$,
\item the $3\times 4$ Jacobian matrix of $\widetilde{H}$ at $(0,0,r_0,s_0)$ has rank 3, and
\item the third and fourth columns of the Jacobian matrix are independent.
\end{enumerate}
The second condition ensures that $\widetilde{H}^{-1}(0,0,0)$ is smooth at $(0,0,r_0,s_0)$ 
and the third condition ensures that the projection of this set to the $(x_0,y_0)$-plane is 
also smooth at $p=(0,0)$.

From now on in this section we assume condition (\ref{eq:Vatorigin}) on $c_0$.
The Jacobian matrix $J$ of $\widetilde{H}$ at $(0,0,b_0,0)$ takes the form (from a direct calculation)
\[
J=\left( \begin{array}{cccc}
-3b_0 & -b_1& 0 & 1 \\
-4ab_0^2-2b_0b_1 & 2b_0b_2-c_1 & 1 & 2ab_0+b_1 \\
-2b_0^2b_2+6b_0c_1-10d_0 &-6b_0^2b_3+4b_0c_2-2d_1 & 4ab_0+2b_1 & 4b_0^2-4b_0b_2+2c_1
\end{array}
\right).
\]
The last two columns of $J$ are always independent, so that provided one of the minors consisting of columns
1,3,4 or 2,3,4 is nonzero, the whole matrix has rank 3 and the vertex curve is
smooth in a neighbourhood of our point $p$. Putting both these minors equal to
zero gives formulas for $d_0$ and $d_1$ in terms of $b_0,b_1,b_2,c_1,c_2$, bearing in mind that $c_0=ab_0^2+b_0b_1$.
This imposes two additional conditions on the point, and hence does not occur on a generic surface. 
\end{proof}

A generic surface may have isolated points at which the circle has higher contact\,:
\medskip

\noindent
{\color{black}\textbf{\small Bi-vertex}. A point of the surface where a circle in the tangent plane has $6$-point 
contact with $M$, that is where one branch of the curve $M\cap T_pM$ has a degenerate vertex, 
is called a \textit{bi-vertex}.}

\begin{rems}\label{condition-bivertex}
{\color{black}
(1) On a generic surface, the condition $A\neq 0$ holds along the $V$-curve, except at the bi-vertices.
Since the equality $A=0$ (implying $6$-point contact) does not affect the proof for the $V$-curve to be smooth 
(Proposition~\ref{prop:singV}), the $V$-curve is still smooth at a bi-vertex. }

\noindent 
(2) The above proof shows that the tangent vector to the vertex curve at $p$ depends on the terms 
$b_0, b_1, b_2, c_1,c_2, d_0$ and $d_1$. This tangent vector comes to
\begin{eqnarray}
&& (4a^2b_0^2b_1+4ab_0^2b_2+4ab_0b_1^2-2ab_0c_1-2b_0^2b_1+3b_0^2b_3+4b_0b_1b_2+b_1^3-2b_0c_2-2b_1c_1+d_1,
 \nonumber \\
&& -4a^2b_0^3-4ab_0^2b_1+6b_0^3-7b_0^2b_2+b_0b_1^2+6b_0c_1-5d_0).
\label{eq:singV}
\end{eqnarray}
\noindent
(3) If the second component of  {\rm (\ref{eq:singV})} is $0$ and the first is nonzero then
 the V-curve is tangent to the corresponding branch of  the intersection of $M$ with its tangent
 plane at the origin.
\end{rems} 

{\color{black}A generic surface may have also the following isolated points\,:
\medskip

\noindent
\textbf{\small Vertex-crossing} or \textbf{\small V-crossing}. 
A point of transverse intersection of $V_\ell$ and $V_r$, the left and 
right (smooth) branches of the $V$-curve, is called \textit{vertex-crossing} or \textit{V-crossing}. 
\smallskip

Thus, at a vertex-crossing each of the two smooth curves comprising 
$M\cap T_pM$ has a vertex. }

Assuming $c_0=ab_0^2+b_0b_1$ for $M$ locally given in Monge form $(5)$ so that the branch of 
$M\cap T_pM$ tangent to the $x$-axis has a vertex, the additional condition for the branch tangent 
to $x=ay$ to have a vertex is the following, which can be regarded as a condition on $c_4$\,: 
\begin{eqnarray*}
b_2b_3-c_4+(2b_1b_3+b_2^2-2b_3^2-c_3)a+(3b_0b_3+3b_1b_2-3b_2b_3-c_2-c_4)a^2+&&\\
(4b_0b_2+2b_1^2-2b_1b_3-b_2^2-c_1-c_3)a^3+(4b_0b_1-b_0b_3-b_1b_2-c_2)a^4+&& \\
(2b_0^2-c_1)a^5=0.&&
\end{eqnarray*}
The osculating circle of this branch at the origin is of the form (\ref{eq:circlex1y1}) with
 \[r=-\frac{a^3b_0+a^2b_1+ab_2+b_3}{a(a^2+1)} \mbox{ and }  s=-\frac{1}{a},\]
 provided $a\ne 0$. (The form (\ref{eq:circlex1y1}) is not adapted to circles 
 whose centre is on the $x$-axis.)

\subsection{Maximum and minimum points}\label{ss:maxmin}

We now seek to distinguish between {\em maximum} and {\em minimum} points. This means:
consider the intersection $X=M\cap T_pM$ at a hyperbolic point, where $p$ belongs to the V-curve. 
Then one branch of $X$, say $X_{\color{black}\ell}$, has a vertex at $p$. Does this vertex correspond 
to a maximum or a minimum of the (absolute) radius of the osculating circle at points of 
$X_{\color{black}\ell}$? 

{\color{black}
\begin{prop}\label{prop:maxmin-hyper}
{\color{black}Let $p\in M$ be a hyperbolic point of the $V$-curve, and take $A$ as in \eqref{eq:Vatorigin}.

\noindent
$(a)$ The absolute radius of curvature $|\kappa^{-1}|$ of the corresponding branch 
of $M\cap T_pM$ has a minimum (maximum) if and only if $rA>0$ $($resp. $rA<0$$)$. 
\smallskip

\noindent
$(b)$ The corresponding branch of $M\cap T_pM$ has a degenerate (double) vertex if and only if $A=0$. 
In this case, $p$ is a bi-vertex and locally separates the $V$-curve into a half-branch of maxima and 
a half-branch of minima. } {\rm (See Figure\,\ref{fig:relevant-v-points}, left.)}
\end{prop}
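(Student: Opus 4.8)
The plan is to reduce everything to the geometry of a single plane curve, namely the branch of $M\cap T_pM$ tangent to the $x$-axis, working in the normal form \eqref{eq:hyperbolic} with $p$ at the origin. Since $T_pM=\{z=0\}$, this branch is exactly the graph $y=y(x)$ of the solution of $f(x,y)=0$ tangent to the $x$-axis, and it lies in the plane $z=0$; so the assertion about the (absolute) radius of curvature of the branch becomes an assertion about the signed planar curvature $\kappa(x)=y''/(1+y'^2)^{3/2}$ of this graph at $x=0$, where by hypothesis $p$ is a vertex, that is $\kappa'(0)=0$.

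First I would solve $f(x,y(x))\equiv 0$ order by order for the Taylor coefficients of $y(x)=\alpha_2x^2+\alpha_3x^3+\alpha_4x^4+\cdots$. The leading orders give $\alpha_2=-b_0$ and, once the vertex condition $c_0=ab_0^2+b_0b_1$ from \eqref{eq:Vatorigin} is imposed, $\alpha_3=0$; continuing one step gives $\alpha_4=-b_2b_0^2+c_1b_0-d_0$. Substituting into the curvature, the vanishing of $\alpha_3$ makes $\kappa'(0)=6\alpha_3=0$ automatic (the vertex), and a short expansion of $\kappa$ to order $x^2$ yields the key identity $\kappa''(0)=24(\alpha_4+b_0^3)=24A$, with $A$ exactly as in \eqref{eq:Vatorigin}. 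Thus the same cubic combination that governed the passage from $5$- to $6$-point contact reappears, up to the positive factor $24$, as the second derivative of the curvature at the vertex.

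With this identity in hand, part (a) is a sign count. Since $\kappa(0)=-2b_0=-2r\neq 0$, the curvature does not vanish near the vertex, so $|\kappa|=\operatorname{sign}(-r)\,\kappa$ there and, using $\kappa'(0)=0$, one gets $(|\kappa|)''(0)=\operatorname{sign}(-r)\,\kappa''(0)=-24\,\operatorname{sign}(r)\,A$. The absolute radius $|\kappa^{-1}|$ has a minimum exactly when $|\kappa|$ has a maximum, that is when $(|\kappa|)''(0)<0$, which is $rA>0$; the maximum case is $rA<0$, giving (a). For part (b), $A=0$ is precisely $\kappa''(0)=0$, so together with $\kappa'(0)=0$ the branch has a degenerate (double) vertex at $p$, equivalently the circle has $6$-point contact with $M$, so $p$ is a bi-vertex by definition. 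Finally, the value $\kappa''(0)=24A$ is a geometric quantity varying smoothly along the V-curve, which is smooth at a bi-vertex by Proposition~\ref{prop:singV} and Remark~\ref{condition-bivertex}(1); generically its restriction to the V-curve has a simple zero at the bi-vertex, so $A$ changes sign there while $r=b_0$ keeps its sign. Hence $rA$ changes sign and the bi-vertex locally separates the half-branch $rA>0$ of minima from the half-branch $rA<0$ of maxima.

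I expect the only real labour to be the order-by-order solution for $y(x)$ and the second-order expansion of $\kappa$; these are routine but must be carried out attentively to pin down both the constant $24$ and the sign, since the whole statement hinges on the clean identity $\kappa''(0)=24A$. The one genuinely non-computational point is the transversality claim in (b)---that $A$ restricted to the V-curve vanishes simply at a bi-vertex---which I would justify by a standard genericity argument, the bi-vertex being cut out on the surface by the two independent conditions (the vertex condition and $A=0$).
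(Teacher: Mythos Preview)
Your argument is correct and follows essentially the same route as the paper: both compute the expansion $\kappa=-2b_0+12Ax^2+\ldots$ of the curvature of the branch (you via the explicit Taylor solution $y(x)$, the paper by asserting the result), do the same sign count to reach (a), and invoke genericity for the simple zero of $A$ along the V-curve to obtain (b). The paper adds the observation $\partial^3 H/\partial x_1^3(0,0,0,b_0,0)=6A$ to tie $A=0$ directly to $6$-point contact via the reduced contact function, but your reference to \eqref{eq:Vatorigin} serves the same purpose.
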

}

\begin{proof} $(a)$ The curvature of the component of $X$ which is tangent to the $x$-axis comes to
\begin{equation}
 \kappa = -2b_0+12Ax^2+ \ldots \mbox{ where } A=b_0^3-b_0^2b_2+b_0c_1-d_0, \mbox{ as in } (\ref{eq:Vatorigin}).
 \label{eq:A}
 \end{equation}
{\color{black}This implies the absolute radius of curvature $|\kappa^{-1}|$ has a minimum (maximum) at $x=0$ 
if and only if $b_0A>0$ (resp. $b_0A<0$).}
Further, as above (\ref{eq:Vatorigin}), $b_0$ is the value of $r$ at $p$. To find $A$ we consider the
next  derivative of the reduced contact function $H$ (Definition~\ref{def:H}).  We get 
\[ \frac{\partial^3H}{\partial x_1^3}(0,0,0,b_0,0)=6A\,,\]
which is zero if and only if the branch of the intersection $M\cap T_pM$ tangent to the $x$-axis has a degenerate vertex\,:
a circle in the tangent plane has 6-point contact with the surface {\color{black}(a bi-vertex; see
Remarks\,\ref{condition-bivertex}(1))}.  

\noindent
{\color{black}$(b)$ In a generic surface, the function $A$ has only simple zeroes on the V-curve 
(at the bi-vertices). Thus a bi-vertex $p$ locally separates the V-curve into two half-branches\,: 
in one branch $A>0$ and in the other $A<0$. 
Item $(b)$ follows from item $(a)$ because $r$ does not change sign at $p$.}
\end{proof}

\subsection{Flecnodal curve and biflecnodes}\label{ss:flec}

\textbf{\small Flecnodal curve}. In the closure of the hyperbolic domain of $M$ there is a smooth 
immersed {\em flecnodal curve} $F$ formed by the points satisfying 
any of the equivalent conditions (F1)-(F4):
\begin{enumerate}
\item[(F1)]  An asymptotic line (left or right) exceeds $3$-point contact with $M$ at $p$.
\item[(F2)]   An asymptotic curve through $p\in M$ (left or right) has an inflexion---that
is, for a regular parametrization the first two derivatives are dependent (proportional) vectors. 
\item[(F3)]  A smooth branch (left or right) of the tangent section $M\cap T_pM$ at $p$ has an inflexion.
\item[(F4)]  In terms of the Monge form (\ref{eq:hyperbolic}), $b_0=0$ and $c_0\ne 0$.
\end{enumerate}

To see why (F4) is equivalent to both (F2) and (F3), note that  the asymptotic curve through
$p$, corresponding to the asymptotic direction $y=0$ in (\ref{eq:hyperbolic}), has expansion (as a space curve)
\[x\mapsto \left(\,x, \,-\textstyle{\frac{3}{2}}\displaystyle b_0 x^2 + \ha(6a b_0^2+5b_0 b_1 - 4c_0)x^3 + \ldots,\,0\,\right)\]
and the corresponding branch of the plane curve $M\cap T_pM$ has expansion
\[x\mapsto \left(\,x, \,-b_0x^2+(ab_0^2+b_0b_1-c_0)x^3+\ldots\,\right).\]

\noindent
\textbf{\small Left and Right Flecnodal Curve}. 
The {\em left} ({\em right}) {\em flecnodal curve} $F_\ell$ (resp. $F_r$) of $M$ 
consists of the points of $F$ at which the over-osculating asymptotic line
is of left (resp. right) type.  
\medskip

A generic surface may have isolated points of transverse 
intersection of the left and right branches of the flecnodal curve, called {\em hyperbonodes}.
The presence of hyperbonodes is necessary for the metamorphosis of the parabolic curve 
in generic $1$-parameter families of surfaces \cite{Uribeevolution}. 
A detailed study on the geometry of hyperbonodes 
was done in \cite{Uribeinvariant}, \cite{Kazarian-Uribe}. 
We can also find isolated points of the flecnodal curve at which the asymptotic line 
exceeds $4$-point contact\,:
\medskip

{\color{black}
\noindent
\textbf{\small Biflecnode}. 
A point at which a line has 5-point contact with the surface is called \textit{biflecnode}. 
\medskip

Hence a biflecnode is a V-point with $r=0$ (a circle of infinite radius). 
Therefore \textit{a biflecnode is a point of transverse intersection of the left (or right) branches 
of the flecnodal and vertex curves}.}  
At a {\em biflecnode} both the asymptotic curve and the intersection 
curve $M\cap T_pM$ have a {\em second order} inflexion.

{\color{black}
\begin{rem}
We obviously get a biflecnode from (\ref{eq:hyperbolic}) by taking $b_0=c_0=0, \ d_0\ne0$.
\label{prop:biflecnode-conditions}
\end{rem}

\begin{prop}
A left (right) biflecnode locally separates the left (resp. right) $V$-curve into a half-branch 
of maxima and a half-branch of minima. 
\label{prop:biflec-maxmin}
\end{prop}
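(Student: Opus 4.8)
The plan is to obtain the maximum/minimum alternative directly from the sign of the product $rA$ furnished by Proposition~\ref{prop:maxmin-hyper}(a), and then to show that this sign reverses across the biflecnode because $r=b_0$ changes sign there while $A$ does not. First I would record the local data at a biflecnode. By Remark~\ref{prop:biflecnode-conditions} a left biflecnode is, in the Monge form (\ref{eq:hyperbolic}), the point where $b_0=c_0=0$ and $d_0\neq0$. Substituting into the expression $A=b_0^3-b_0^2b_2+b_0c_1-d_0$ of (\ref{eq:A}) gives $A=-d_0\neq0$ at the biflecnode, so by continuity $A$ keeps this constant nonzero sign throughout a neighbourhood of it. Recall also, from the proof of Proposition~\ref{prop:singV}, that at each point of the left V-curve the parameter $r$ of the osculating circle of the branch $X_\ell$ of $M\cap T_pM$ tangent to the $x$-axis equals the local coefficient $b_0$.

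The decisive step is the sign change of $r=b_0$ along the V-curve. Viewing $b_0$ as a function on $M$ (via the Monge normalisation at each point), the left flecnodal curve is cut out by $b_0=0$ (condition (F4)), and a left biflecnode is by definition a point of transverse intersection of that flecnodal curve with the left V-curve. Transversality means precisely that the restriction of $b_0$ to the left V-curve has nonzero derivative at the biflecnode, so $b_0=r$ changes sign as one traverses the V-curve through that point. Since $A=-d_0$ has constant nonzero sign nearby, the product $rA=b_0A$ therefore changes sign across the biflecnode. By Proposition~\ref{prop:maxmin-hyper}(a), the condition $rA>0$ characterises minima and $rA<0$ maxima of the absolute radius of curvature; hence the two half-branches of the left V-curve meeting at the biflecnode consist respectively of maxima and minima, which is the assertion. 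The argument for a right biflecnode and the right V-curve is identical after interchanging the two branches.

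The only point requiring care is the claimed sign change of $b_0$ along the V-curve, and this is exactly the transversality of the V-curve to the flecnodal curve at the biflecnode, which is built into the definition of a biflecnode as a transverse crossing of these two curves; so the main work is really just the evaluation $A=-d_0\neq0$. I would note, to forestall confusion, that at the biflecnode itself $b_0=0$ forces $\kappa(0)=-2b_0=0$ in (\ref{eq:A}), i.e. the branch $X_\ell$ there has an inflexion and its radius of curvature is infinite; the maximum and minimum statements concern the two adjacent half-branches, where $b_0\neq0$ and the vertex has finite radius of curvature.
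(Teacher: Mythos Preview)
Your proof is correct and follows essentially the same route as the paper's: both argue that $A=-d_0\neq 0$ at the biflecnode while $r=b_0$ changes sign there, and then invoke Proposition~\ref{prop:maxmin-hyper}(a). Your version is more explicit in justifying the sign change of $b_0$ via the transversality of $F$ and $V$ built into the definition of a biflecnode, whereas the paper simply asserts that $b_0$ changes sign.
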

\begin{proof}
The statement follows from Proposition~\ref{prop:maxmin-hyper}\,$(a)$ because at a biflecnode $p$ 
of a generic surface we have $A\neq 0$ and the value of $r$ (i.e., of $b_0$) changes sign at $p$ 
(cf. Remark\,\ref{prop:biflecnode-conditions}). 
\end{proof}

\subsection{Stable isolated vertex points in the hyperbolic domain}
Some of the different possibilities for the above isolated vertex points are shown in 
Figure\,\ref{fig:relevant-v-points} (see Proposition~\ref{prop:maxmin-hyper} and \ref{prop:biflec-maxmin}). 
A bi-vertex may be left or right; there are four types of V-crossings (the branches $V_\ell$ and 
$V_r$ may consists of maxima or of minima); a biflecnode may be left or right. }
\begin{figure}[h]
\centerline{\includegraphics[width=4.6in]{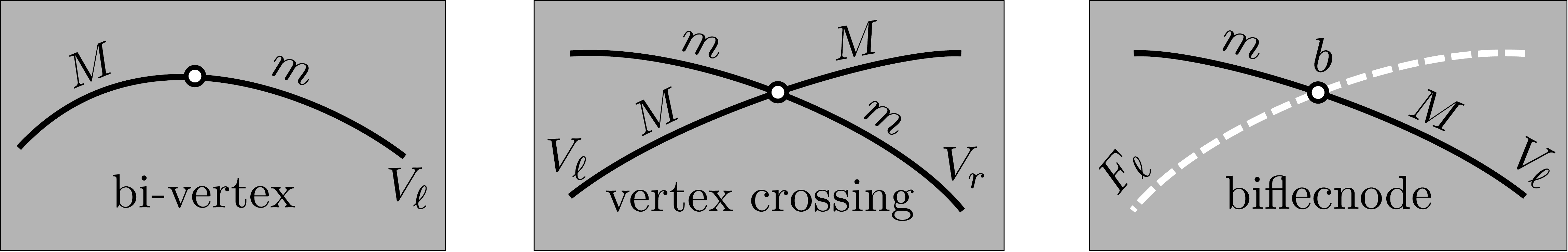}}
\caption{\small A left bi-vertex, a vertex crossing (with $V_\ell$-Max, $V_r$-min) and a left 
biflecnode.}
\label{fig:relevant-v-points}
\end{figure}

A corollary of Proposition~\ref{prop:maxmin-hyper}\,$(b)$ and Proposition~\ref{prop:biflec-maxmin} 
is the 
\begin{theorem}
  On each connected component of the V-curve of a compact generic surface in $\RR^3$ the number 
  of bi-vertices plus the number of biflecnodes is even. \\
  On each connected component of the V-curve of a compact generic orientable surface of $\RR^3$ 
  there is an even number {\rm (possibly $0$)} of bi-vertices and an even number {\rm (may be $0$)} 
  of biflecnodes. 
\end{theorem}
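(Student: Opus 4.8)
The plan is to treat the maximum/minimum classification of Proposition~\ref{prop:maxmin-hyper} as a $\mathbb{Z}/2$-valued label along the $V$-curve and to run a parity argument on each connected component. On the regular part of the $V$-curve I would record the \emph{type} $t:=\operatorname{sign}(rA)\in\{+1,-1\}$, which by Proposition~\ref{prop:maxmin-hyper}$(a)$ distinguishes a minimum ($t=+1$) from a maximum ($t=-1$) of the absolute radius of curvature of the relevant branch of $M\cap T_pM$; being defined through an \emph{absolute} radius of curvature, $t$ is a genuine geometric invariant, independent of any orientation. Propositions~\ref{prop:maxmin-hyper}$(b)$ and~\ref{prop:biflec-maxmin} say precisely that $t$ changes sign exactly when one crosses a bi-vertex (a simple zero of $A$, with $r\neq 0$) or a biflecnode (a simple zero of $r$, with $A\neq 0$), and that it is locally constant elsewhere. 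Since a locally constant $\{+1,-1\}$-valued function on a circle must return to its starting value, the number of sign changes around a component is even; as these are exactly the bi-vertices and biflecnodes, the first statement follows.

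For a compact \emph{orientable} surface I would separate the two kinds of points by factoring $t=\operatorname{sign}(r)\cdot\operatorname{sign}(A)$. The biflecnodes are precisely the zeros of $r$, that is the points at which the relevant branch of $M\cap T_pM$ has an inflection, equivalently where its signed curvature $\kappa=-2b_0$ vanishes; by genericity these zeros are simple, so $\kappa$ changes sign at each. A global orientation of $M$ orients every tangent plane coherently, so $\kappa$ becomes a well-defined continuous real function along the component, and a continuous function on a loop has an even number of transverse zeros. Hence the number of biflecnodes is even, and subtracting from the first statement shows the number of bi-vertices is even as well. Orientability enters only here: on a non-orientable immersed surface the sign of $\kappa$ (and of $A$) is defined only up to a global ambiguity, so only the orientation-independent product $\operatorname{sign}(rA)$ is globally meaningful, which is exactly why one then recovers the combined parity alone.

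The substance of the argument is in showing that each connected component really behaves like a circle carrying such a continuous label, and here I expect two issues. The vertex-crossings are the easy one: there $V_\ell$ and $V_r$ meet transversally, each branch passing through with a nondegenerate vertex, so the cleanest bookkeeping is to argue on $V_\ell$ and $V_r$ as individual smooth curves and to sum the parities over the circles that make up the component. The real obstacle is the behaviour at the cusps of Gauss, where by Proposition~\ref{prop:VatCoG} two branches of the $V$-curve run tangentially into the parabolic curve and the section $M\cap T_pM$ degenerates, so that $r$, $A$ and $\kappa$ must be controlled in the limit. I expect the crux to be verifying that at a hyperbolic cusp of Gauss the two incoming branches join so that the components close into loops, and that the type extends continuously across such a junction with no spurious extra change of sign; this requires the local normal-form analysis of \S\ref{s:coG} rather than the soft parity argument above. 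Once that is established, the mod~$2$ count is immediate.
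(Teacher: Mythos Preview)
Your proposal is correct and is exactly the argument the paper intends: the paper offers no proof text beyond declaring the theorem ``a corollary of Proposition~\ref{prop:maxmin-hyper}(b) and Proposition~\ref{prop:biflec-maxmin}'', and your parity argument on the sign of $rA$ (and, in the orientable case, on the signs of $r$ and $A$ separately) is the natural way to cash that sentence out.

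Your caution about cusps of Gauss is well placed, and in fact the paper glosses over this. The clean way to run the argument is not to follow $V_\ell$ and $V_r$ through a cusp of Gauss $g$ (these switch between $V^1$ and $V^2$ there, by Proposition~\ref{prop:left-righ_Vcurve-at-g}, so the type could jump), but to regard the V-curve as an immersed compact $1$-manifold whose sheets near $g$ are the smooth curves $V^1$ and $V^2$ of Proposition~\ref{prop:VatCoG}. Proposition~\ref{prop:maxmin-CoG} then shows that the maximum/minimum type is constant on each of $V^1,V^2$ near $g$, so no spurious sign change occurs; since $V^1$ and $V^2$ meet at $g$, they lie in the same connected component, and summing the (even) counts over the immersed circles gives the result. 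With that one adjustment your outline goes through.
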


\section{Vertex curves at a cusp of Gauss}\label{s:coG}

\medskip\noindent
One of the most remarkable  points of a generic surface $M$ is a
\smallskip 

\noindent
\textbf{\small Cusp of Gauss}. Assume that the parabolic curve of $M$ is smooth.
A {\em cusp of Gauss} is a parabolic point at which the unique (but double) asymptotic  line is tangent to the parabolic curve. 

\medskip
\noindent{\small{\bf Note on terminology}} Two other common names for a cusp of Gauss
(that is, a cusp of the Gauss map)
are `godron', favoured by Ren\'{e} Thom, and `ruffle', used in J.Koenderink's well-known book
\cite{solidshape}. These names have the advantage that they do not suggest a Euclidean setting, and
indeed the cusp of Gauss is actually a {\em projectively} invariant concept; see \S\ref{sss:projective} below.
This article is about Euclidean concepts so we shall stick to `cusp of Gauss', except in circumstances
where this would prove unwieldy, as in `flecgodron' (\S\ref{ss:flecgodron}).

\subsection{Some basic properties of cusps of Gauss}
Cusps of Gauss have lots of interesting properties. Let us mention two of them\,: 
\medskip

\noindent
 \textit{All curves on $M$ tangent to the parabolic curve at a cusp of  Gauss 
$g$ have torsion zero at $g$},
 \cite{ricardo}. 
 \smallskip
 
Therefore the space of $2$-jets of such tangent curves, $J_g^2:=\{(t,\frac{1}{2}ct^2,0):c\in\RR\}\approx \RR$, 
is identified (up to a factor $\frac{1}{2}$) with the set of their curvatures $\{c\in\RR\}$.

\begin{slemma}[\cite{ricardo}]
Given a cusp of Gauss $g\in M$, there exists a unique $2$-jet $($curvature$)$ $\sigma$ in $J_g^2$ {\rm (called 
{\em separating $2$-jet at $g$})} satisfying the following properties\,$:$ 
\smallskip

\noindent
$(a)$ The images, by the Gauss map $\Gamma:M\rightarrow\mathbb{S}^2\subset\RR^3$, of all curves of 
$M$ tangent to the asymptotic line at $g$ and whose curvature at $g$ is different from $\sigma$ 
are semi-cubic cusps of $\mathbb{S}^2$ sharing the same tangent line at $\Gamma(g)$. 
\medskip

\noindent
$(b)$ {\sl Separating property}\,$:$ The images under $\Gamma$ of any two curves tangent to the asymptotic 
line at $g$, whose $2$-jets $($curvatures$)$ are separated by $\sigma$, are cusps pointing in opposite directions.  
\end{slemma}

\noindent
\textbf{\small Separating invariant.} The number $\sigma$ given in the above lemma is a Euclidean invariant 
of the cusp of Gauss $g\in M$ that we call the \textit{separating invariant}. 
\medskip

\noindent 
\textbf{\small Monge form}. 
Let $p\in M$ be a parabolic point. We shall take $p$ as the origin and the asymptotic line 
at $p$ as the $x$-axis. Then the (degenerate) quadratic part of the Monge form is $y^2$\,: 
\begin{equation}
 z = y^2+b_0x^3+b_1x^2y+b_2xy^2+b_3y^3 + c_0x^4+ c_1x^3y+ c_2x^2y^2+c_3xy^3+c_4xy^4+d_0x^{\color{black}5}+ \ldots.
 \label{eq:coG}
\end{equation}
\begin{lemma}\label{lemma:Monge godron}
Assume as before that the parabolic curve is smooth.
A parabolic point of a  surface in Monge form\,\eqref{eq:coG} is a cusp
of Gauss if and only 
if $b_0=0$ and $b_1\neq 0$. {\rm (We will see below that $-b_1$ is a Euclidean invariant of cusps of Gauss.)} 
\end{lemma}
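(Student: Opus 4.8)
The plan is to work directly from the paper's definition of a cusp of Gauss---a parabolic point at which the (double) asymptotic line is tangent to the parabolic curve---rather than from the Gauss map. Since $M$ is given in the Monge form \eqref{eq:coG} with tangent plane $z=0$, the Gaussian curvature equals $\Delta/(1+|\nabla f|^2)^2$ where $\Delta := f_{xx}f_{yy}-f_{xy}^2$, so it has the sign of the Hessian determinant and the parabolic curve is exactly the zero locus $\{\Delta=0\}$. First I would compute the partial derivatives of $f$ from \eqref{eq:coG} and extract the lowest-order part of $\Delta$. Using $f_{yy}(0,0)=2$, $f_{xx}=6b_0x+2b_1y+\cdots$ and $f_{xy}=2b_1x+2b_2y+\cdots$, the term $f_{xy}^2$ contributes only at order two, so the linear part of $\Delta$ is $12b_0x+4b_1y$.

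From this the characterization falls out. Under the standing assumption that the parabolic curve is smooth at the origin, its gradient $\nabla\Delta(0,0)=(12b_0,4b_1)$ must be nonzero, which forces $(b_0,b_1)\neq(0,0)$; moreover the tangent line to the parabolic curve is the kernel of the linear part, namely $\{3b_0x+b_1y=0\}$. On the other hand, the (double) asymptotic direction at the origin is the null direction of the degenerate quadratic part $y^2$, i.e. the $x$-axis with direction $(1,0)$. Hence the asymptotic line is tangent to the parabolic curve precisely when $(1,0)$ lies on $\{3b_0x+b_1y=0\}$, i.e. when $3b_0=0$, equivalently $b_0=0$.

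Finally I would combine these two observations: a parabolic point of \eqref{eq:coG} is a cusp of Gauss iff $b_0=0$, and smoothness of the parabolic curve then forces the surviving component $b_1$ of $\nabla\Delta$ to be nonzero; conversely $b_0=0,\ b_1\neq0$ gives $\nabla\Delta(0,0)=(0,4b_1)\neq0$ (so the parabolic curve is smooth, with tangent line the $x$-axis) together with tangency of the asymptotic line, hence a cusp of Gauss. This yields the stated equivalence. There is essentially no hard obstacle here---the whole content is the computation of the linear part of $\Delta$---but the one point requiring care is the bookkeeping showing that the $f_{xy}^2$ and the quartic $c_i$ terms do not contribute at linear order, so that the tangent direction of the parabolic curve is governed entirely by $b_0$ and $b_1$.
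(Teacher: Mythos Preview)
Your proposal is correct and follows essentially the same approach as the paper's own proof: compute the linear part of the Hessian determinant $f_{xx}f_{yy}-f_{xy}^2$ (the paper writes it as $3b_0x+b_1y+\ldots$, your $12b_0x+4b_1y$ differs only by the harmless factor of $4$), identify the asymptotic line as the $x$-axis, and read off the tangency condition $b_0=0$ together with the smoothness condition $b_1\neq0$. Your version is slightly more explicit about the converse and about why higher-order terms do not affect the linear part, but the argument is the same.
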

\begin{proof}
The local equation of the parabolic curve $P$, \, $f_{xx}f_{yy}-f_{xy}^2=0$,\, 
starts with the terms 
\[ 3b_0x+b_1y+\ldots=0\,. \]
Thus the asymptotic line at $p$ ($y=0$) is tangent to the parabolic curve at $p$ 
if and only if $b_0=0$ and $b_1\neq 0$ (because the parabolic curve is smooth). 
\end{proof}


\begin{coGspecial}\label{item:specialcoG}
{\rm \textbf{\small Simple and special cusps of Gauss} 
(a) The condition for the image of the Gauss map at the origin to be an
ordinary (semi-cubical) cusp, using the above form (\ref{eq:coG}), is
$b_1^2-4c_0\ne 0$. When this holds, we say the cusp of Gauss is
{\em simple} (sometimes called {\em nondegenerate}). {\em On a generic surface all
cusps of Gauss are simple.}

\noindent
(b) The condition $b_1^2-4c_0\ne 0$  is also the condition for the
height function $z(x,y)$ in the normal direction $(0,0,1)$ at the origin, that is the
contact function between $M$ and its tangent plane at the origin, to have type
exactly $A_3$.

\smallskip\noindent
(c) The height function can degenerate in two ways: to type $A_4$ or to $D_4$.
 Both these are non-generic for a single surface but occur generically in
 1-parameter families; we explore such families in \S\ref{s:cod1}.
 
 \smallskip\noindent
 In the case of $A_4$, also called a double 
 cusp of Gauss, or {\em bigodron}, the parabolic curve remains smooth
 ($b_1\ne 0$), and 
 $b_1^2-4c_0=0, \ b_1^2b_2-2b_1c_1+4d_0\ne 0$. 
 This can be regarded as the collapse of two simple cusps of Gauss, one elliptic
 and one hyperbolic. See \S\ref{ss:degen-coG}. (This is also sometimes
 called a {\em degenerate} cusp of Gauss but the term is ambiguous and `double'
 is a more descriptive term.)
 
 \smallskip\noindent
 In the case of $D_4$, 
 also called a {\em flat umbilic},
 the parabolic curve becomes singular. See \S\ref{ss:D4}.
 
 \smallskip\noindent
 (d) There is also the possibility that the parabolic curve undergoes a `Morse transition',
 becoming singular at the moment of transition. See \S\ref{ss:Morse}.
 }
\end{coGspecial}

\bigskip
\noindent
We now show the following.
\textit{The only common points of the vertex curve and the parabolic curve are cusps of Gauss\,}:
\smallskip

\noindent
\textbf{Proposition}. 
\textit{If a parabolic point $p$ of a generic surface is a vertex point, then $p$ is a cusp of Gauss.} 

\begin{proof}
Let $p$ be a parabolic point of $M$, for $M$ is locally given in Monge form\,\eqref{eq:coG}. 
If $p$ is also a vertex point, it is easy to check that the contact function 
(\ref{eq:contact-hyper}) at $p=(0,0)$ takes the form
\[ b_0x_1^3+(r^2-b_1r+c_0)x_1^4+\ldots.\]
Referring to the circle given by (\ref{eq:circlex1y1}), we must have
$s=0$ to ensure that the 5-point contact circle is tangent to the intersection curve $M\cap T_pM$. 
This implies the 
\begin{lemma}\label{prop:rs}
There is 5-point contact at $(x_0,y_0)=(0,0)$ if and only if $b_0=0$, $r$ is a real solution of
$r^2-b_1r+c_0=0$, and $s=0$.  The  curvature of this circle is $2r$. 
\end{lemma}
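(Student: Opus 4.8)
The plan is to obtain all three conditions, together with the curvature, by reading off the low-order coefficients of the contact function \eqref{eq:contact-hyper} at $p=(0,0)$, using the convention that ``(at least) $5$-point contact'' means the vanishing of the coefficients of $x_1^2,x_1^3,x_1^4$ (those of $x_1^0$ and $x_1^1$ vanish automatically, as observed after \eqref{eq:contact-hyper}). Since $p$ is the origin, the frame \eqref{eq:e1e2e3} is the standard basis, so $(X,Y,Z)=(x_1,y_1,0)$ and the contact function becomes $G(x_1)=-f(x_1,y_1)$, where $y_1=y_1(x_1)$ is the branch through the origin of the circle \eqref{eq:circlex1y1}; its expansion is found by solving $r(x_1^2+y_1^2)+sx_1+y_1=0$, which gives $y_1=-sx_1-r(1+s^2)x_1^2+\cdots$.

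First I would substitute this series into the Monge form \eqref{eq:coG}. Because the only quadratic monomial of $f$ is $y^2$, the coefficient of $x_1^2$ in $f(x_1,y_1)$ is $s^2$, so contact of order $\ge3$ already forces $s=0$. This is precisely the analytic content of the sentence preceding the lemma: the section $M\cap T_pM=\{f=0\}$ has a branch tangent to the $x$-axis, and a circle meeting it to order $\ge3$ must share that tangent, i.e.\ have $s=0$. Imposing $s=0$ gives $y_1=-rx_1^2+O(x_1^4)$, whereupon every monomial of $f$ other than $y^2,\,b_0x^3,\,b_1x^2y,\,c_0x^4$ contributes only at order $\ge5$; collecting terms then yields, up to the overall sign already present in $G=-f$, the displayed expansion $b_0x_1^3+(r^2-b_1r+c_0)x_1^4+\cdots$. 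Hence the coefficient of $x_1^3$ vanishes iff $b_0=0$ and, granted $b_0=0$, that of $x_1^4$ vanishes iff $r^2-b_1r+c_0=0$. Since the circle must be a genuine circle in $T_pM$, the relevant $r$ is a \emph{real} root of this quadratic, and as every step is an equivalence the converse direction is immediate. The curvature is finally read from \eqref{eq:circlex1y1}: it equals $2r/\sqrt{1+s^2}=2r$ once $s=0$.

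The underlying computation is routine, and the only step that genuinely needs care is the order bookkeeping: after imposing $s=0$ one must confirm that the substitution $y_1=-rx_1^2+\cdots$ really does push the contributions of $b_2xy^2,\,b_3y^3,\,c_1x^3y,\dots$ beyond order $x_1^4$, so that the order-$4$ coefficient collapses cleanly to $r^2-b_1r+c_0$. Although not strictly part of the stated lemma, I would also record here the observation used in the proposition this lemma serves: the quadratic $r^2-b_1r+c_0$ has real roots exactly when $b_1^2-4c_0\ge0$, with two distinct real roots---two osculating circles, hence the two branches of the V-curve---at a hyperbolic cusp of Gauss and none in the elliptic case. This is consistent with $b_1^2-4c_0$ being the simple-cusp discriminant of \ref{item:specialcoG} and with the cusp-of-Gauss criterion of Lemma~\ref{lemma:Monge godron}.
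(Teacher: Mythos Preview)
Your proof is correct and follows essentially the same approach as the paper: expand the contact function at the origin (where the frame \eqref{eq:e1e2e3} is the standard basis), read off the coefficients of $x_1^2,x_1^3,x_1^4$, and identify the curvature from \eqref{eq:circlex1y1}. The paper compresses this into the displayed expansion $b_0x_1^3+(r^2-b_1r+c_0)x_1^4+\ldots$ together with a one-line geometric justification of $s=0$ (tangency to the branch of $M\cap T_pM$), whereas you obtain $s=0$ algebraically from the $x_1^2$ coefficient $s^2$ and make the order bookkeeping explicit; the content is the same.
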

\noindent 
Lemma\,\ref{lemma:Monge godron} and Lemma\,\ref{prop:rs} imply that $p$ is a cusp of Gauss for which $b_1^2-4c_0>0$. 
\end{proof} 

\begin{defs}\label{def:coG}
A cusp of Gauss  is said to be {\em hyperbolic} if the intersection with the tangent plane is 
two tangential curves, that is $b_1^2-4c_0>0$. 
A cusp of Gauss is said to be {\em elliptic} if the intersection with the tangent plane is 
an isolated point, that is $b_1^2-4c_0<0$. \\
(In \cite{ricardo} there are five other geometric characterisations of elliptic and hyperbolic cusps of Gauss.)
\end{defs}

\noindent 
\textit{A cusp of Gauss belongs to the vertex curve if and only if it is hyperbolic.} (By Lemma~\ref{prop:rs}.)  

\medskip
\noindent
\textbf{\small Remark} \
At a hyperbolic cusp of Gauss neither of the two tangential curves comprising $M\cap T_pM$  
has a vertex at $p$, {\color{black}but their respective osculating circles have $5$-point contact 
with the surface ($3$-point contact with the osculating branch and $2$-point contact with the other 
tangent branch).}

\subsubsection{Projective and Euclidean invariants of cusps of Gauss}\label{sss:projective}
In fact cusps of Gauss are projectively invariant. 
Platonova's (projective) normal form of the $4$-jet of a surface at a cusp of Gauss
 $g$ is 
$z=\frac{1}{2}y^2-x^2y+\frac{1}{2}\rho x^4$, where $\rho$ is a projective invariant 
defined in \cite{ricardo} as a cross ratio. 
A cusp of Gauss $g$ is hyperbolic (resp.\ elliptic) if and only if $\rho<1$ 
(resp. $\rho>1$), and simple if and only if $\rho\ne 1$.
Computing the cross-ratio invariant $\rho$ in Monge form\,\eqref{eq:coG} (with $b_0=0$), 
we get $\rho=4c_0/b_1^2$.

In our Euclidean case, other coefficients of \eqref{eq:coG} will also play a role. For example, 

\begin{prop*}{\rm (Uribe-Vargas, unpublished.)}\label{prop:curvature-separating}
At a cusp of Gauss, the curvature of the line of $($zero$)$ principal curvature is equal to the 
separating invariant $\sigma$. In Monge form\,\eqref{eq:coG}, with $b_0=0$, this curvature is equal to $-b_1$. 
\end{prop*}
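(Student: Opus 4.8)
The plan is to verify the two assertions separately in the Monge form \eqref{eq:coG} with $b_0=0$, showing that both the curvature of the line of zero principal curvature and the separating invariant $\sigma$ equal $-b_1$. Since the point is parabolic, one principal curvature vanishes and its principal direction coincides with the (double) asymptotic direction, the $x$-axis; the ``line of (zero) principal curvature'' is then the line of curvature through $g$ tangent to the $x$-axis, and its curvature at $g$ is the quantity to be computed.

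First I would compute this curvature. Writing the line of curvature as $y=\frac12 k x^2+\ldots$ (so $k$ is its signed plane curvature at the origin) and substituting into the line-of-curvature equation
\[
\begin{vmatrix} dy^2 & -dx\,dy & dx^2 \\ E & F & G \\ L & M & N \end{vmatrix}=0,
\]
I note that at the origin $E=G=1$ and $F=0$, and that along the curve $E,G$ differ from $1$ only at order $x^4$ while $F=O(x^4)$. Hence to leading order the equation reduces to $-M\,dy^2+(N-L)\,dx\,dy+M\,dx^2=0$ with $L=z_{xx}$, $M=z_{xy}$, $N=z_{yy}$. Using $M\approx 2b_1x$ and $N-L\approx 2$ near the origin, together with $dy/dx\approx kx$, the leading $O(x)$ balance gives $2kx+2b_1x=0$, so $k=-b_1$. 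Since $b_0=0$ forces $z=O(x^4)$ along this curve, the curvature vector at $g$ lies in the tangent plane and the signed curvature equals $k=-b_1$, as claimed.

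Next I would compute the separating invariant directly from the Separating $2$-jet Lemma. Representing the Gauss map near $g$ by its component tangent to $T_gM=\{z=0\}$, namely $x\mapsto(-z_x,-z_y)$, I evaluate it along a curve tangent to the $x$-axis with $2$-jet $y=\frac12 c x^2+\ldots$ and obtain
\[
\bigl(-z_x,-z_y\bigr)=\bigl(-(b_1c+4c_0)x^3+\ldots,\ -(c+b_1)x^2+\ldots\bigr).
\]
For $c\neq -b_1$ this is a semicubic cusp whose tangent at $\Gamma(g)$ is the common line $\{x=0\}$, while the cusp points in the direction determined by $\mathrm{sign}\bigl(-(c+b_1)\bigr)$; this sign reverses exactly as $c$ crosses $-b_1$. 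By parts $(a)$ and $(b)$ of the Separating $2$-jet Lemma this identifies $\sigma$ with the value $c=-b_1$. (At $c=-b_1$ both components are $O(x^3)$, so the image degenerates --- consistent with $\sigma$ being the exceptional $2$-jet, and with simplicity $b_1^2-4c_0\neq0$ guaranteeing the cubic coefficient $4c_0-b_1^2$ of $-z_x$ is nonzero.) Comparing the two computations yields the curvature of the line of zero principal curvature equal to $\sigma$, both equal to $-b_1$.

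The conceptual reason behind the coincidence, which also serves as a check, is Rodrigues' formula: along the line of curvature of the zero principal curvature one has $d\Gamma=-\kappa\,d\mathbf r$ with $\kappa=0$ at $g$, so the Gauss image of this particular curve has a critical point rather than a generic cusp --- precisely the degenerate separating behaviour isolated by the lemma. The main obstacle I anticipate is the bookkeeping of signs and orientations: one must measure the curvature of the line of curvature and the $2$-jet parameter $c$ of the lemma with matching conventions (both as signed curvatures in $J^2_g$ relative to the $y$-axis), and one must carry the expansions to cubic order in $x$ in order to pin down both the common cusp tangent and the value of $c$ at which the cusp direction reverses.
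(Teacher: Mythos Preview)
The paper does not actually prove this proposition: it is stated as ``(Uribe-Vargas, unpublished)'' and then used without further argument. So there is no paper proof to compare against, and your proposal must be judged on its own merits.

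Your argument is correct in both halves. For the line-of-curvature computation, the reduction of the determinant equation to $-M\,dy^2+(N-L)\,dx\,dy+M\,dx^2=0$ is valid because along the curve $y=\tfrac12kx^2+\ldots$ one has $z_x=O(x^3)$ and $z_y=O(x^2)$, so $E-1,G-1,F$ are all of order $\ge x^4$; the leading balance $2kx+2b_1x=0$ then gives $k=-b_1$, and your remark that $z=O(x^4)$ along this curve justifies identifying the space-curve curvature with the planar signed curvature $k$. For the separating invariant, your expansion of $(-z_x,-z_y)$ along $y=\tfrac12cx^2+\ldots$ is accurate, and the observation that the cusp direction is governed by $\mathrm{sign}(-(c+b_1))$, flipping precisely at $c=-b_1$, is exactly what parts (a) and (b) of the Separating $2$-jet Lemma require to pin down $\sigma=-b_1$. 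The Rodrigues-formula remark is a nice conceptual consistency check.

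Two small points worth tightening when you write it up: be explicit that ``curvature'' here is the signed curvature in $J_g^2\simeq\RR$ (the paper's convention just above the lemma), so that the equality $\sigma=-b_1$ is a statement about signed quantities; and avoid reusing the symbol $x$ for the first coordinate of the Gauss-map image, since $x$ already names a Monge coordinate.
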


Then the coefficient $-b_1$ represents the geometric and purely Euclidean invariant $\sigma$. Thus we shall write
\begin{equation}
\label{eq:sigma-rho}
 b_1=-\sigma, \ \ c_0=\textstyle{\frac{1}{4}}\displaystyle \sigma^2\rho. 
 \end{equation}

\subsection{Tangency of the parabolic and vertex curves at a cusp of Gauss}
\textbf{\small `Naturally' oriented coordinates}. At each elliptic point $p$ the surface lies 
locally on one of the two half-spaces determined by its tangent plane at $p$, called the
\textit{positive half-space at $p$}. By continuity, the positive half-space is well defined 
at parabolic points. At a cusp of Gauss $g$, direct the positive $z$-axis to the positive half-space at $g$, 
the positive $y$-axis towards the hyperbolic domain, and the positive $x$-axis in such way that any basis 
$(e_x, e_y, e_z)$ of $x, y, z$ forms a positive frame of the oriented $\RR^3$. 

Using the local Monge form of $M$ at a cusp of Gauss (see (\ref{eq:sigma-rho})) 
\begin{equation}
z=f(x,y)=y^2-\sigma x^2y+b_2xy^2+b_3y^3+\textstyle{\frac{1}{4}}\displaystyle \sigma^2\rho x^4 
+c_1x^3y+c_2x^2y^2+c_3xy^3+c_4y^4+\ldots
\label{eq:hypCoG}
\end{equation}
we find that, when $b_1<0$, the elliptic domain is on the side $y<0$ of the tangent line
$y=z=0$ to the parabolic curve at the origin and the positive $z$-axis is 
the limit of normals to $M$ directed into the positive
half-space supporting $M$ at these elliptic points. Therefore the $x,y,z$ axes
are naturally oriented as above.

 \smallskip
{\em We therefore assume $b_1 =-\sigma < 0$ from now on.}
\smallskip

\begin{prop}
Let $g$ be a simple (\S\ref{item:specialcoG})
hyperbolic cusp of Gauss  of a generic smooth surface $M$.   
In a neighbourhood of $g$, the $V$-curve consists of two smooth curves, 
tangent to the parabolic curve at $g$, and having at least
$3$-point contact with each other. 
For $M$ locally given in Monge form \eqref{eq:hypCoG} the  condition 
for exactly 3-point contact is $c_1+\sigma b_2\neq 0$.
\label{prop:VatCoG}
\end{prop}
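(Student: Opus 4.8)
The plan is to work throughout with $M$ in the cusp-of-Gauss Monge form \eqref{eq:hypCoG} (so $b_0=0$, $b_1=-\sigma<0$, $c_0=\tfrac14\sigma^2\rho$, with $\rho<1$ by hyperbolicity and $\rho\neq1$ by simplicity) and to realise the V-curve as the projection to the $(x_0,y_0)$-plane of $\widetilde H^{-1}(0,0,0)$, where $\widetilde H$ is the map \eqref{eq:Htilde} built from the reduced contact function $H$ of Definition~\ref{def:H}. First I would record the fibre of this projection over the origin: by Lemma~\ref{prop:rs} it is the pair of points $P_\pm=(0,0,r_\pm,0)$ with $r_\pm=\tfrac{\sigma}{2}\bigl(-1\pm\sqrt{1-\rho}\,\bigr)$, the two real distinct roots of $r^2-b_1r+c_0=0$ (generically nonzero, i.e.\ genuine circles, since $\rho\neq0$). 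Hence the V-curve germ at $g$ has at most two branches, one emanating from each $P_\pm$, and it is the symmetric functions $r_++r_-=\sigma$ and $r_+r_-=\tfrac14\sigma^2\rho$, together with $r_+-r_-=\sigma\sqrt{1-\rho}$, that will drive the argument.

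Next I would establish smoothness of each branch just as in the proof of Proposition~\ref{prop:singV}: compute the $3\times4$ Jacobian of $\widetilde H$ at each $P_\pm$ and verify that it has rank $3$ with its $(r,s)$-columns independent. This lets me solve $H_{x_1}=H_{x_1x_1}=0$ for the auxiliary parameters and then impose $H=0$, so that near $P_\pm$ the set $\widetilde H^{-1}(0)$ is a smooth arc projecting to a smooth curve germ in the $(x_0,y_0)$-plane. Since each such branch turns out to be tangent to the $x$-axis, I would write it as a graph $y_0=\psi_\pm(x_0)=\tfrac12 k_\pm x_0^2+\tfrac16 m_\pm x_0^3+\cdots$, whose Taylor coefficients are obtained by evaluating common rational expressions in $r$ at the leading value $r=r_\pm$. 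The first output is $\psi_\pm'(0)=0$; because the parabolic curve has tangent $y=0$ at $g$ (its linear part is $-\sigma y+\cdots$ by Lemma~\ref{lemma:Monge godron}), this is exactly the statement that both branches are tangent to the parabolic curve.

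The heart of the matter is the comparison of the two branches, and here I would exploit the root structure directly. Each coefficient of $\psi$ is a rational expression in $r$ (with the $b_i,c_i$ as constants) which, after clearing denominators and reducing modulo the defining relation $r^2=-\sigma r-\tfrac14\sigma^2\rho$, becomes an affine function $A+Br$ of $r$; its two branch values therefore differ by $B(r_+-r_-)=B\sigma\sqrt{1-\rho}$, vanishing (as $\rho\neq1$) iff $B=0$. The assertion of at least $3$-point contact between the branches is then precisely that the reduced linear coefficient $B$ attached to the curvature is zero, i.e.\ $k_+=k_-$, whence $\psi_+-\psi_-=O(x_0^3)$; and exactly $3$-point contact is that the analogous coefficient at order $x_0^3$ is nonzero, which the computation should identify with a nonzero multiple of $c_1+\sigma b_2$. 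The main obstacle is thus computational-structural: carrying the expansions of $H,H_{x_1},H_{x_1x_1}$ for \eqref{eq:hypCoG} to third order in $x_0$ and checking that the curvature coefficient is genuinely symmetric in $r_\pm$ while the cubic coefficient is not. I expect the step $k_+=k_-$ — that $V_\ell$ and $V_r$ share a curvature at $g$ despite arising from different roots — to be the one place where this symmetric-function reduction is indispensable rather than merely convenient.
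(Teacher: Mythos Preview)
Your proposal is correct and follows essentially the same route as the paper: identify the two fibre points $P_\pm$ via Lemma~\ref{prop:rs}, check rank of the Jacobian of $\widetilde H$ at each to get smoothness and local parametrization by $x_0$, read off tangency to $P$ from the first row, and then compare the Taylor expansions of the two branches as graphs over $x_0$. The paper carries this out by direct computation, finding both branches have quadratic part $\tfrac12\sigma\rho\,x_0^2$ and cubic difference $B_1-B_2=8\sqrt{1-\rho}\,(\sigma b_2+c_1)$; your symmetric-function/affine-in-$r$ reduction is a clean way to organise the same calculation (note a slip: $r_++r_-=-\sigma$, not $\sigma$, though your relation $r^2=-\sigma r-\tfrac14\sigma^2\rho$ is correct).
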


\begin{proof}
At a hyperbolic cusp of Gauss $g$ there are two distinct circles having 5-point contact 
with the surface at $g$ (Lemma\,\ref{prop:rs}). 
Thus there are two branches of the vertex curve through the cusp of Gauss $g$.  
We shall prove that these branches are smooth and tangential there.\footnote{
In \cite[p.86]{diatta-giblin} it is stated that  a V-curve does not always exist in a 
neighbourhood of a hyperbolic cusp of Gauss.  This is incorrect.}
\medskip

Following the method of \S\ref{s:contact}, we evaluate the Jacobian matrix $J$ of the map 
$\widetilde{H}$ at $(0,0,r_0,0)$ (see  \eqref{eq:Htilde}), where 
$r_0=\frac{1}{2}(-\sigma+\sqrt{\sigma^2-4c_0})=-\frac{1}{2}\sigma(1-\sqrt{1-\rho})$ 
is one of the two  values of $r$, we obtain a matrix whose third and fourth columns are 
\[ \left(0,0,-2\sigma\sqrt{1-\rho}\right)^\top \mbox{ and } \left(0,-\sigma\sqrt{1-\rho},2c_1-4b_2r_0\right)^\top,\]
which are  independent since $\rho\ne 1$ for a simple cusp of Gauss.
The $3\times 3$ minors formed by columns 1,3,4 and 2,3,4 are respectively
0 and $-2\sigma^3(1-\rho)$, therefore the branch of $\widetilde{H}^{-1}(0)$ and the
corresponding branch of the vertex curve of the surface $M$ at the origin are smooth and
can both be parametrized locally by $x_0$, provided {\color{black}the cusp of Gauss is simple.}

The first row of the Jacobian matrix is $(0,\sigma,0,0)$ and this implies that (given $\sigma\ne 0$) 
a kernel vector of this matrix has the form $(\xi_1, 0, \xi_3, \xi_4)$ for some $\xi_1, \xi_3, \xi_4$ 
where $\xi_1\ne 0$ since the projection of the tangent vector to the first two coordinates is not zero.  
Hence the tangent to this local branch of the vertex curve at $g$ is $(1,0)$ 
in the $(x,y)$-plane, or $(1,0,0)$ in the ambient 3-space.  

The same applies to the other local branch of the vertex curve, and therefore {\color{black}both} local
branches are tangent {\color{black}to the parabolic curve at $g$.}

Applying the same method as \S\ref{s:vt-hyper} to \eqref{eq:hypCoG}, we find the initial terms of the parametrization 
of the two local branches of the vertex curve 
\begin{equation}
V^1: y=\ha\sigma\rho x^2 + B_1x^3+\ldots, \qquad V^2: y =\ha\sigma\rho x^2 + B_2x^3+\ldots,
\label{eq:vt-CoG}
\end{equation}
where $B_1-B_2=8\sqrt{1-\rho}\,(\sigma b_2+c_1)$. 
Thus provided $c_1+\sigma b_2\neq 0$,
the two local branches have exactly 3-point contact, and therefore will cross
tangentially at $g$.  
\end{proof}

\begin{rem}\label{left-right-orient-F}
{\color{black}
It is well known that at every cusp of Gauss of a generic smooth surface 
the flecnodal curve $F$ is also tangent to the parabolic curve $P$. 
Moreover, \textit{cusps of Gauss locally separate the flecnodal curve into 
left and right half-branches, and the local right-to-left orientation of $F$, at a hyperbolic cusp of Gauss, 
coincides with the negative-to-positive orientation of the $x$-axis 
in our oriented coordinates} \cite{ricardo}.} 
For the V-curve we have a similar statement\,: 
\end{rem}

\begin{prop}\label{prop:left-righ_Vcurve-at-g}
At a hyperbolic cusp of Gauss $g$, each tangential component $V^1$, $V^2$ of the {\rm V}-curve is 
locally separated by $g$ into left and right half-branches. The local right-to-left orientation 
of the curve $V^2$ coincides with the negative-to-positive orientation of the $x$-axis 
{\rm (like the flecnodal curve $F$)} and is opposite to that of $V^1$.  
\end{prop}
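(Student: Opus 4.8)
The plan is to decide the left/right type of a V-point $p$ on each branch $V^1,V^2$ by the sign of the torsion of the asymptotic curve tangent to the vertex-carrying branch of $M\cap T_pM$, and then to track how this sign behaves as $p$ passes through the cusp of Gauss $g$. I work throughout in the naturally oriented Monge form \eqref{eq:hypCoG}, so that $\sigma>0$ and $\rho<1$ and all signs are pinned down. For a hyperbolic point $p=(x_0,y_0)$ near $g$ I first compute the second fundamental form $L,M,N$ of $M$ at $p$ and the two asymptotic slopes $m_\pm$, the roots of $N\mu^2+2M\mu+L=0$; to leading order $m_\pm=\sigma x_0\pm\sigma\sqrt{1-\rho}\,|x_0|$, symmetric about the first principal direction, and the two branches of $M\cap T_pM$ are tangent to $m_+$ and $m_-$. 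By Beltrami--Enneper the two asymptotic curves at $p$ have torsions $\pm\sqrt{-K(p)}$, and since left/right is by definition the sign of the frame determinant $\det[\gamma',\gamma'',\gamma''']$ (i.e. the sign of the torsion), the $m_+$-branch and the $m_-$-branch have opposite handedness. A short geodesic-torsion estimate, $\tau_g\approx(\kappa_2-\kappa_1)\sin\theta\cos\theta$ with $\theta\approx\sigma\sqrt{1-\rho}\,|x_0|$, shows the leading term is $\propto|x_0|$, so \emph{each} of $m_+,m_-$ keeps a fixed handedness on both sides of $g$.

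Next I identify, for each V-branch, which asymptotic direction actually carries the vertex. Using \S\ref{s:contact}, at a point $p$ on $V^j$ I solve the $5$-point-contact conditions for the circle parameters $(r,s)$ as functions of $x_0$, recovering at $x_0=0$ the two values $r_0^{(1,2)}=-\tfrac12\sigma(1\mp\sqrt{1-\rho})$ of Lemma~\ref{prop:rs}; the tangent line of this osculating circle at $p$ is the tangent to the vertex-carrying branch, hence one of $m_+,m_-$. The point to extract is that this assignment \emph{swaps} as $x_0$ changes sign: the branch whose osculating circle tends to the one of curvature $2r_0^{(2)}$ is tangent to opposite asymptotic directions on the two sides of $g$ (and the $V^1$-branch to the other direction). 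Because $m_+$ and $m_-$ have opposite fixed handedness, this swap is exactly what separates each $V^j$ into a left and a right half-branch at $g$, which proves the first assertion.

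Combining the fixed handedness of $m_\pm$ with the swap gives the torsion sign of the vertex branch as a function of $\mathrm{sgn}(x_0)$ along each $V^j$, which is odd in $x_0$ to leading order; reading off the signs places the right half-branch of $V^2$ on the $x_0<0$ side and its left half-branch on the $x_0>0$ side, so the right-to-left orientation of $V^2$ agrees with increasing $x$, matching the flecnodal orientation recalled in Remark~\ref{left-right-orient-F}. The opposite orientation of $V^1$ then comes for free: the two V-branches carry their vertices on the two \emph{distinct} branches of $M\cap T_pM$ (the two $5$-point-contact circles at $g$, of curvatures $2r_0^{(1)}\neq 2r_0^{(2)}$), which are the opposite-torsion asymptotic branches, so $V^1$ inherits the opposite handedness at each $x_0$ and hence the opposite right-to-left orientation.

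I expect the main obstacle to be the sign bookkeeping in the middle step: correctly determining, with the fixed orientation conventions, which of $m_\pm$ carries the vertex on each side of $g$ together with the torsion sign of that direction. The qualitative separation into left and right half-branches is robust (it follows from the swap and from $m_\pm$ having opposite fixed handedness), but the \emph{absolute} orientation asserted in the statement — and its agreement with the $x$-axis and with $F$ — depends on getting both of these signs right simultaneously, since only their combination yields the claimed negative-to-positive direction.
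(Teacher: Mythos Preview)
Your plan is structurally the same as the paper's: for $p$ on $V^j$ near $g$, identify which of the two asymptotic directions carries the vertex, determine its handedness, and track this across $x_0=0$. The execution differs in two places.

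First, the paper avoids torsion entirely. It uses only the elementary observation that near $g$ the left asymptotic line has the smaller slope (in the $(x,y)$-plane) than the right one; since the unique asymptotic slope at a nearby parabolic point is $\sigma x_0$ to first order, ``left'' becomes the inequality $P<\sigma x_0$ and ``right'' becomes $P>\sigma x_0$. Your Beltrami--Enneper plus geodesic-torsion estimate is aimed at the same conclusion (that your $m_+$ keeps a fixed handedness on both sides of $g$), but it is a longer and more delicate route to something the paper gets in one line, and it is the source of the sign anxiety you flag at the end.

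Second, and more importantly, the ``swap'' you single out as the key step is exactly right, but in your proposal it is only asserted. The paper supplies the missing computation: the slope of the vertex-carrying asymptotic direction is $\sigma(1+\sqrt{1-\rho})x_0$ along $V^1$ and $\sigma(1-\sqrt{1-\rho})x_0$ along $V^2$, both \emph{linear} in $x_0$. Comparing with your own $m_\pm=\sigma x_0\pm\sigma\sqrt{1-\rho}\,|x_0|$ makes the swap transparent. The absolute orientation then drops out of the two inequalities
\[
\sigma(1+\sqrt{1-\rho})\,x_0>\sigma x_0 \iff x_0>0,
\qquad
\sigma(1-\sqrt{1-\rho})\,x_0<\sigma x_0 \iff x_0>0,
\]
with no residual sign ambiguity. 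If you replace the torsion estimate by this slope criterion and actually carry out the vertex-carrying-slope computation, your argument collapses to the paper's and the orientation worry disappears.
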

\begin{proof}
For a point $p$ of the V-curve close to $g$ we shall write the asymptotic 
directions on $M$ at $p$, projected to the plane $z=0$, as $(1,P)$. One can easily verify that 
the two asymptotic directions (at the hyperbolic points near $g$), projected to the 
$(x,y)$-plane, satisfy that the slope of the left asymptotic line is $<$ 
the slope of the right one. 
 
For a point $(x_0,y_0(x_0), f(x_0, y_0))$ of $V^1$ close to $g$
we find that to first order in $x_0$ the asymptotic direction tangent to the branch 
of $M\cap T_pM$, having a vertex, is $(1, \sigma(1+\sqrt{1-\rho})x_0,0)$, and  
the respective asymptotic direction for $(x_0,y_0(x_0), f(x_0, y_0))$ of $V^2$ is 
$(1,\sigma(1-\sqrt{1-\rho})x_0,0)$.

For a parabolic point near $g$ the unique asymptotic direction, to first order in $x_0$, is 
$(1,\sigma x_0, 0)$. 

Then for the hyperbolic points, with fixed $x=x_0$, near $g$ the slope $P_\ell$ 
of their left asymptotic line must satisfy $P_\ell<\sigma x_0$. the condition for the right 
asymptotic lines is $P_r>\sigma x_0$. 

Thus for points $p=(x_0,y_0,f(x_0,y_0))$ of the tangential component $V^1$ or $V^2$ of the V-curve close to $g$ we have 
\[ V^1 \ \mbox{is right at } p \ \Longleftrightarrow \sigma(1+\sqrt{1-\rho})x_0 > \sigma x_0 \Longleftrightarrow x_0 > 0\,,\]
\[ V^2 \ \mbox{is left at } p \ \Longleftrightarrow \sigma(1-\sqrt{1-\rho})x_0 < \sigma x_0 \Longleftrightarrow x_0 > 0.\]
Therefore Proposition~\ref{prop:left-righ_Vcurve-at-g} is proved.
\end{proof}


\section{Further interactions at cusps of Gauss}\label{s:flecCoG}

\subsection{Configurations of geometrically defined curves at cusps of Gauss}
Write $T_-$ and $T_+$ for the two branches of the tangent section $M\cap T_gM$ at $g$. 
We shall determine the relative positions (near $g$) of the vertex curves $V^1, V^2$, 
the flecnodal curve $F$, the parabolic curve $P$, the branches $T_\pm$ of $M\cap T_{g}M$, 
and the line $C$ of (zero) principal curvature through $g$. 
Since all these curves are tangent to the asymptotic line at $p$, 
their $2$-jet is a curve on the tangent plane of the form $y=\frac{1}{2}cx^2+\ldots$. 
Therefore the local configurations of $F$, $P$, $V$, $T_\pm$, $C$ and the asymptotic line at $g$ 
are determined by the relative positions of their respective curvatures 
$c_{\sF}$, $c_{\sP}$, $c_{\sV}$, $c_{\sT_-}$, $c_{\sT_+}$ and $c_{\sC}=\sigma$ on the real line. 

\begin{theorem}\label{rho-classification_tangent}
Given a simple hyperbolic cusp of Gauss $g$ of $M$, there are seven possible configurations of the 
curves $F$, $P$, $V$, $T_\pm$, $C$ and the asymptotic tangent line at $g$ 
{\rm (Figure~\ref{fig:godron-class_section})}. 
The actual configuration depends on which of the intervals defined by the 
exceptional values $\cos \frac{5\pi}{6}$, $\cos \frac{4\pi}{6}$, $\cos \frac{3\pi}{6}=0$, 
$\cos \frac{2\pi}{6}$, $\cos \frac{\pi}{6}$, $8/9$,  
the invariant $\rho$ belongs to, respectively\: 
\[
\begin{array}{lcl}
\rho\in(\,\,-\infty\,\,,\,\cos \frac{5\pi}{6}) 
&  \iff &  c_{\sP}<c_{\sV}<c_{\sT_-}<\sigma<c_{\sT_+}<c_{\sF}\,;\\
\rho\in(\cos \frac{5\pi}{6},\cos\frac{4\pi}{6}) 
&  \iff &  c_{\sP}<c_{\sV}<c_{\sT_-}<\sigma<c_{\sF}<c_{\sT_+}\,;\\
\rho\in(\cos \frac{4\pi}{6},\cos\frac{3\pi}{6}) 
&  \iff &  c_{\sP}<c_{\sV}<c_{\sT_-}<c_\sF<\sigma<c_{\sT_+}\,;\\
\rho\in(\cos\frac{3\pi}{6},\cos\frac{2\pi}{6})  
& \iff & c_\sP<c_\sF<c_{\sT_-}<c_\sV<\sigma<c_{\sT_+}\,;\\
\rho\in(\cos \frac{2\pi}{6},\cos \,\frac{\pi}{6}\,)  
& \iff &  c_\sP<c_\sF<c_{\sT_-}<c_\sV<\sigma<c_{\sT_+}\,;\\
\rho\in(\cos \,\frac{\pi}{6}\,,\frac{8}{9})  
&  \iff &  c_\sP<c_{\sT_-}<c_\sF<c_\sV<\sigma<c_{\sT_+}\,;\\
\rho\in(\frac{8}{9},1) &  \iff &  c_{\sT_-}<c_\sP<c_\sF<c_\sV<\sigma<c_{\sT_+}\,.
\end{array}
\]
\end{theorem}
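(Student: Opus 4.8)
The plan is to turn the statement into a one–variable ordering problem. Near $g$ each listed object---the flecnodal curve $F$, the parabolic curve $P$, the two branches $V^1,V^2$ of the vertex curve, the two branches $T_-,T_+$ of $M\cap T_gM$, the line $C$ of zero principal curvature, and the asymptotic line itself---is tangent to the asymptotic line, hence is a graph $y=\tfrac12 c\,x^2+\dots$ over the $x$-axis recorded by the single number $c$. Two such graphs cross tangentially at $g$, the one of larger curvature lying above the other on both sides, so the germ of the whole picture is fixed by the positions on $\RR$ of the six curvatures $c_\sF,c_\sP,c_\sV,c_{\sT_-},c_{\sT_+},c_\sC$ together with the value $0$ supplied by the asymptotic line. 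As $\sigma=-b_1>0$ factors out of every curvature (standing assumption of \S\ref{s:coG}), the theorem reduces to tracking the ordering of a fixed list of functions of $\rho$ on $(-\infty,1)$.

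First I would read off the curvatures from \eqref{eq:hypCoG}. Three are immediate: $c_\sC=\sigma$ (the unpublished Proposition identifying the curvature of $C$ with $\sigma$) and $c_\sV=\sigma\rho$ for both branches, taken directly from \eqref{eq:vt-CoG}. Treating $f(x,y)=0$ as a quadratic in $y$ gives the tangent branches $c_{\sT_\pm}=\sigma\bigl(1\pm\sqrt{1-\rho}\bigr)$; in particular $c_{\sT_-}<\sigma<c_{\sT_+}$ for all $\rho<1$, which already explains why $c_\sC=\sigma$ lies between $c_{\sT_-}$ and $c_{\sT_+}$ in every row. The parabolic curve is the $2$-jet of $f_{xx}f_{yy}-f_{xy}^2=0$, whose lowest terms are $-4\sigma y+2\sigma^2(3\rho-2)x^2+\dots$, so $c_\sP=\sigma(3\rho-2)$.

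The flecnodal curvature $c_\sF$ is the one substantial computation and the step I expect to be the \emph{main obstacle}. I would use the contact description directly: a point lies on $F$ exactly when the asymptotic direction $(1,P)$---a root of the second fundamental form $f_{xx}+2f_{xy}P+f_{yy}P^2=0$---also annihilates the third-order form $f_{xxx}+3f_{xxy}P+3f_{xyy}P^2+f_{yyy}P^3$, the latter being the vanishing of the third derivative of the height of $M$ over the asymptotic line (four-point contact). Near $g$ the relevant branch of the first equation is $P=\sigma x-\sqrt{\sigma y+\sigma^2(1-\tfrac32\rho)x^2}+\dots$, while the leading part of the second gives $P=\sigma\rho x+\dots$; equating, isolating the radical and squaring yields $c_\sF=\sigma\rho(2\rho-1)$. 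The delicate points are keeping exactly the terms surviving to order $x^2$ along $y=\tfrac12 c_\sF x^2$, and checking that it is the minus-sign branch of the asymptotic direction that meets the flecnodal condition.

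With every curvature available the rest is a finite ordering check, which I would organise through the pairwise coincidences in $(-\infty,1)$. After cancelling $\sigma>0$, the equality $c_\sP=c_{\sT_-}$ becomes $3(1-\rho)=\sqrt{1-\rho}$, giving the isolated value $\rho=\tfrac89$. Every remaining transition involves $c_\sF$: clearing the radical in $c_\sF=c_{\sT_\pm}$ collapses the equation to $4\rho^3-3\rho=T_3(\rho)=0$, with roots $\rho\in\{0,\pm\tfrac{\sqrt3}{2}\}$, while $c_\sF=c_\sC$ and $c_\sF=0$ give the further values $\rho=-\tfrac12$ and $\rho=\tfrac12$; these five numbers are precisely $\cos\tfrac{k\pi}{6}$ for $k=1,\dots,5$ (the roots of $U_5(\rho)=2\rho(4\rho^2-1)(4\rho^2-3)$), and are the source of the exceptional values in the statement. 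I would then evaluate the list at one interior point of each of the seven open intervals to confirm that the ordering of the six curvatures is constant there, noting that the two intervals meeting at $\rho=\cos\tfrac{2\pi}{6}=\tfrac12$ carry the same six-curvature ordering and are separated only by the sign of $c_\sF$, that is by whether $F$ lies above or below the asymptotic line. Reading off these orderings produces the seven displayed configurations.
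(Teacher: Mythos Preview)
Your proposal is correct and follows essentially the same approach as the paper: compute the quadratic coefficients $c_\sF,c_\sP,c_\sV,c_{\sT_\pm},c_\sC$ from the Monge form~\eqref{eq:hypCoG}, observe that $\sigma$ factors out, and read off the ordering as a function of $\rho$ alone. The paper obtains $c_\sF$ by the same four-point-contact condition you describe (stating only that the calculation is ``straightforward'') and then simply asserts that the listed expressions determine the exceptional values, referring to Figure~\ref{fig:godron-curvatures}; your treatment is more explicit in both places, and your identification of the five trigonometric values as the roots of $U_5$ (via $T_3(\rho)=4\rho^3-3\rho$ from $c_\sF=c_{\sT_\pm}$) is a pleasant observation the paper does not make.
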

\begin{figure}[h]
\centerline{\includegraphics[width=5.2in]{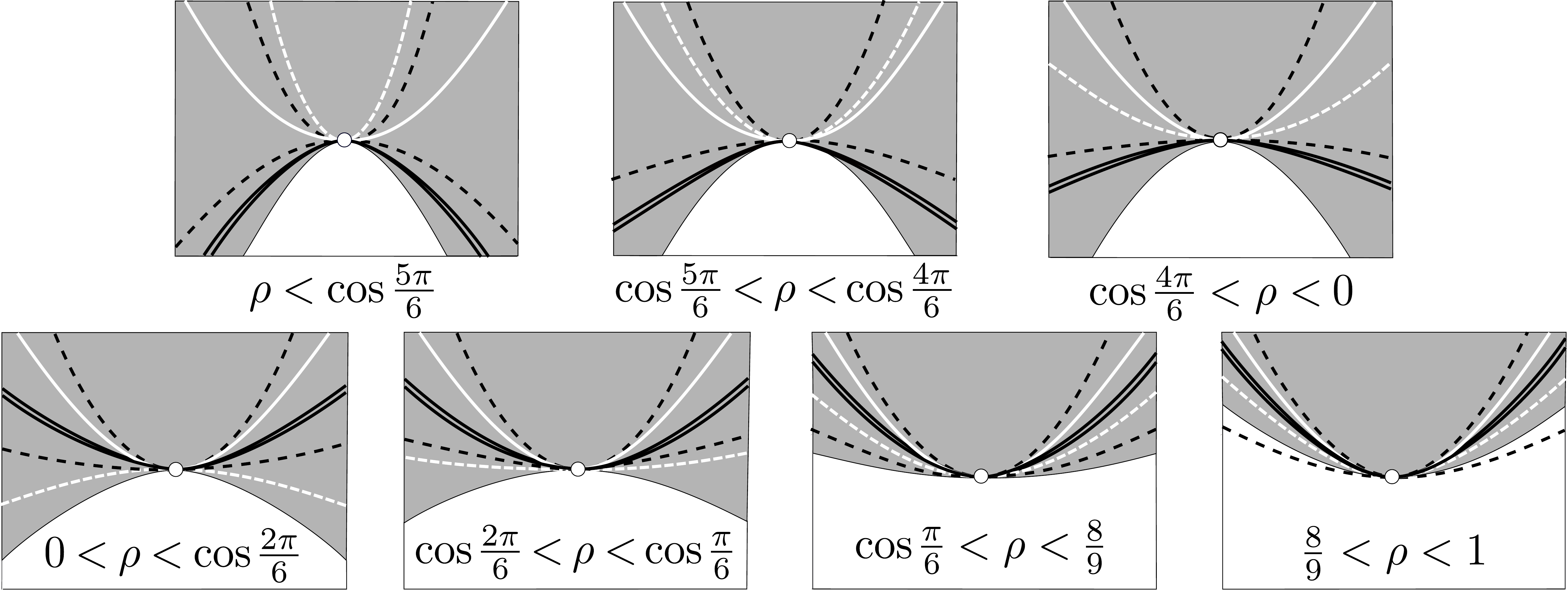}}
\caption{\small The seven generic configurations, at a hyperbolic cusp of Gauss, of the curves\,: 
flecnodal $F$ (white dotted), parabolic $P$ 
(boundary between white and grey domains), V-curve $V$ (black curves
which are very close together), tangent section 
$T_\pm$ (black dotted curves), line of principal curvature $C$ (white).}
\label{fig:godron-class_section}
\end{figure}

\begin{proof}
To determine the flecnodal curve $F$ near $g$ we consider 
tangent lines to $M$ at points $(x,y,f(x,y))$ and impose
the condition that the line should have at least 4-point contact with $M$. 
It is then straightforward to calculate the local equation:
\begin{equation}
\mbox{flecnodal curve} \ F: \ \ y =\ha \sigma\rho(2\rho-1)x^2+\ldots \, .
\label{eq:flec-hypCoG}
\end{equation}

The local equation of $P$, $y=\ha\sigma(3\rho-2)x^2+\ldots$\,, is given by the Hessian: 
$f_{xx}f_{yy}-f_{xy}^2=0$. We get the local equations of $T_\pm$ from \eqref{eq:hypCoG} 
by solving $f(x,y)=0$: $y=\ha\sigma(1\pm\sqrt{1-\rho})+\ldots$.  

If in addition we use \eqref{eq:flec-hypCoG}, \eqref{eq:vt-CoG} and Proposition~\ref{prop:curvature-separating},  
we find that the $2$-jets of the curves $F$, $P$, $V$, $T_-$, $T_+$ and $C$ on $M$, are curves in the 
tangent plane written as $y=h(x)$, where $h$ is given by the following respective functions\,: 
\[\ha\sigma\rho(2\rho-1)x^2, \quad \ha\sigma(3\rho-2)x^2, \quad \ha\sigma\rho x^2, 
\quad \ha\sigma(1+\sqrt{1-\rho})x^2, \quad \ha\sigma(1-\sqrt{1-\rho})x^2, \quad \ha\sigma x^2\,. 
\]
Thus the respective curvatures are $c_{\sF}=\sigma\rho(2\rho-1)$, $c_{\sP}=\sigma(3\rho-2)$, 
$c_{\sV}=\sigma\rho$, $c_{\sT_-}=\sigma(1+\sqrt{1-\rho})$, $c_{\sT_+}=\sigma(1-\sqrt{1-\rho})$ and 
$c_{\sC}=\sigma$. Since all these curvatures have $\sigma$ as factor, their relative positions 
in the real line are determined by $\rho$.  Thus in Figure~\ref{fig:godron-curvatures} the 
curvatures are divided by $\sigma$. 

\begin{figure}[h]
\centerline{\includegraphics[width=5in]{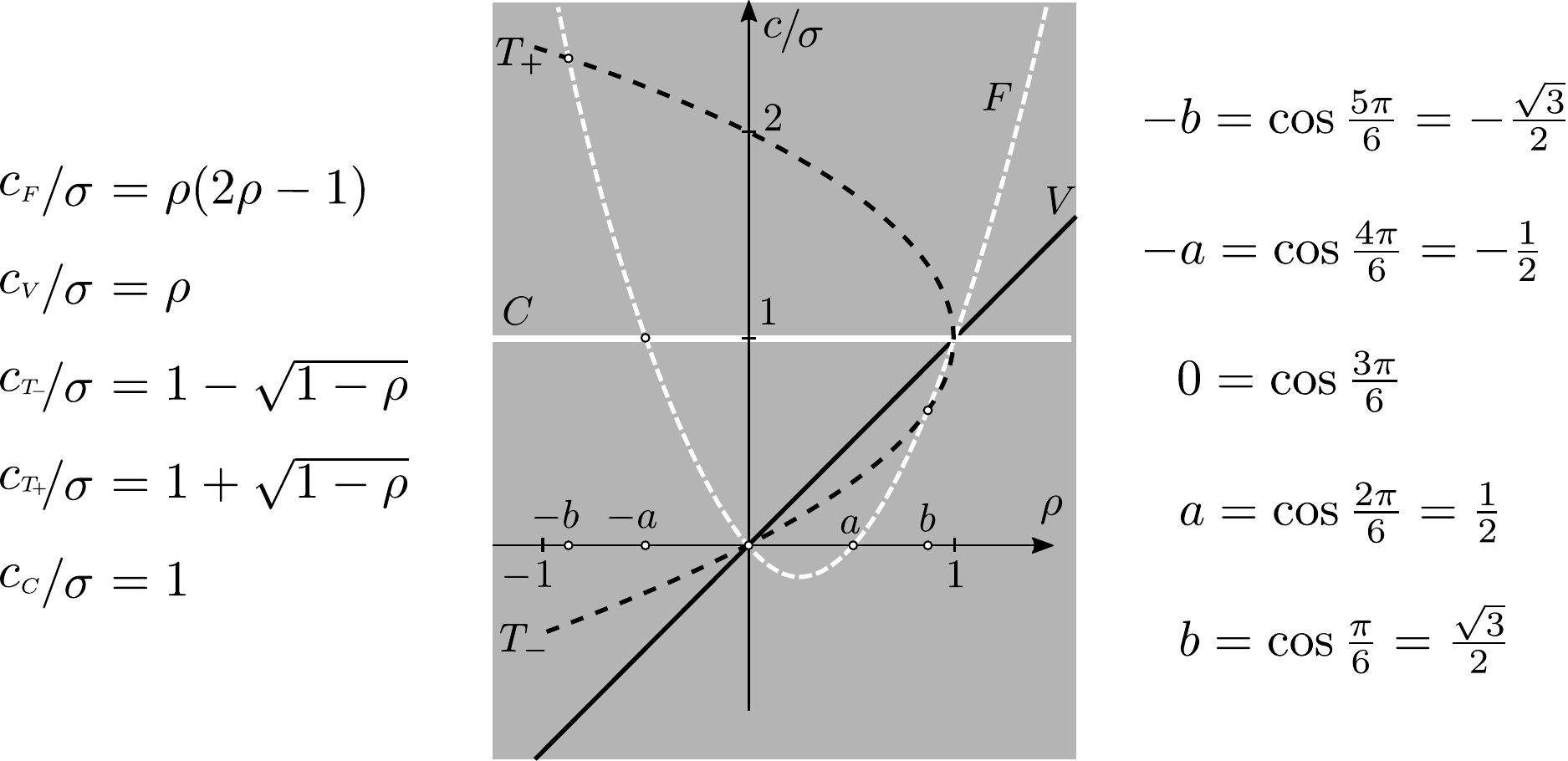}}
\caption{\small The curvatures $c_F$ (white dotted), $c_{\sV}$ (black), $c_{\sT_\pm}$ (dotted) 
and $c_{\sC}$ (white), all divided by $\sigma$.}
\label{fig:godron-curvatures}
\end{figure}
The expressions for the curvatures determine the exceptional values of $\rho$. 
\end{proof}
\noindent
\textbf{\small Note}. 
{\small For simplicity we omit from Figure~\ref{fig:godron-curvatures} the graph of 
$c_{\sP}/\sigma=(3\rho-2)$, which is a line. It cuts $c_{\sT_-}$ at $\rho=\frac{8}{9}$ 
and the $\rho$-axis at $\rho=\frac{2}{3}$ (the value of $\rho$ where $P$ changes its convexity).} 


\subsection{Relative positions of the flecnodal and vertex curves considering left, right branches and minimum, maximum types}
Consider a hyperbolic cusp of Gauss $g\in M$ with given cr-invariant $\rho$ and separating invariant $\sigma$, 
and write $r_1 = -\ha\sigma(1+\sqrt{1-\rho})$,\, $r_2=-\ha\sigma(1-\sqrt{1-\rho})$. 

Take $M$ in Monge form \eqref{eq:hypCoG}. 
\begin{prop}\label{prop:maxmin-CoG}
At points close to $g$ on the tangential components $V^1, V^2$ 
of the V-curve the absolute radius function has
\begin{eqnarray*} \mbox{a maximum on} \ V^1 &\Longleftrightarrow&  G_1>0\,; \\
 \mbox{a maximum on} \  V^2 &\Longleftrightarrow & \rho\,G_2>0 \quad (G_2, \ \rho \ \mbox{have equal signs})\,,
\end{eqnarray*}
where $G_1:=-r_1^2b_2+r_1c_1-d_0$\, and \,$G_2:=-r_2^2b_2+r_2c_1-d_0$. 
\end{prop}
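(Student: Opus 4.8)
The plan is to reduce to the maximum/minimum criterion of Proposition~\ref{prop:maxmin-hyper}\,$(a)$ by letting the V-point tend to $g$. Every point $p\ne g$ of $V^1$ or $V^2$ is a hyperbolic V-point, so that criterion applies at $p$: writing $r(p)$ for the parameter of the $5$-point contact circle there and $A(p)=\tfrac16\,\partial^3H/\partial x_1^3$ for the associated value of the reduced contact function $H$ (Definition~\ref{def:H}), the absolute radius of curvature of the branch of $M\cap T_pM$ carrying the vertex has a maximum if and only if $r(p)A(p)<0$. It therefore suffices to determine the sign of $r(p)A(p)$ for $p$ on $V^i$ close to $g$, which I obtain by computing the limit as $p\to g$.

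First I would evaluate the two factors in the limit. Along $V^i$ the parameters $(x_0,y_0,r,s)$ of the $5$-point contact circle tend to $(0,0,r_i,0)$, where $r_1,r_2$ are the two roots of $r^2-b_1r+c_0=0$ supplied by Lemma~\ref{prop:rs}; in particular $r(p)\to r_i$. For the other factor, note that at any V-point $H,H_{x_1},H_{x_1x_1}$ all vanish, so $\tfrac16\,\partial^3H/\partial x_1^3$ is exactly the coefficient of $x_1^5$ in the contact function $G=x_1^2H$. Substituting the osculating parametrization $y_1=-rx_1^2+\ldots$ of the circle \eqref{eq:circlex1y1} with $s=0$ into the contact function \eqref{eq:contact-hyper} built from \eqref{eq:hypCoG} and reading off this coefficient at the origin with $r=r_i$ yields exactly $G_i=-r_i^2b_2+r_ic_1-d_0$; hence $A(p)\to G_i$ by continuity. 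Thus, provided $r_iG_i\ne0$, we have $\operatorname{sign}\!\big(r(p)A(p)\big)=\operatorname{sign}(r_iG_i)$ for $p$ near $g$, and the radius has a maximum on $V^i$ if and only if $r_iG_i<0$.

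It remains to insert the signs of $r_1$ and $r_2$. We have fixed $\sigma=-b_1>0$, and a hyperbolic cusp of Gauss satisfies $\rho<1$, so $\sqrt{1-\rho}>0$. From $r_1=-\tfrac12\sigma(1+\sqrt{1-\rho})$ one sees that $r_1<0$ always, whence $r_1G_1<0\iff G_1>0$, which is the asserted condition on $V^1$. From $r_2=-\tfrac12\sigma(1-\sqrt{1-\rho})$ and $\operatorname{sign}(1-\sqrt{1-\rho})=\operatorname{sign}(\rho)$ we get $\operatorname{sign}(r_2)=-\operatorname{sign}(\rho)$, so $r_2G_2<0\iff\rho G_2>0$, which is the asserted condition on $V^2$.

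The delicate point is the frame-dependence underlying the second paragraph: Proposition~\ref{prop:maxmin-hyper}\,$(a)$ is proved in a Monge frame adapted to the hyperbolic point, whereas the asymptotic direction at $p$ only tends to the $x$-axis as $p\to g$, so computing $A(p)$ in the fixed frame \eqref{eq:hypCoG} differs from the adapted value by the effect of a small frame rotation. What must be verified is that, because the $x_1^3$ and $x_1^4$ coefficients of $G$ vanish at every V-point, an orientation-preserving reparametrization of the $5$-point contact circle rescales the leading $x_1^5$ coefficient by a positive factor only; its sign---the only datum the criterion uses---is therefore preserved, and likewise $\operatorname{sign}(r(p))$ is unchanged, so evaluating the criterion in the fixed frame and passing to the limit $A(p)\to G_i$ is legitimate. (The non-generic loci $\rho=0$, where $r_2=0$ and the circle degenerates to a biflecnodal line, and $r_iG_i=0$ are excluded.)
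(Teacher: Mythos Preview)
Your proof is correct and follows essentially the same route as the paper: both invoke Proposition~\ref{prop:maxmin-hyper}\,$(a)$, observe that along $V^i$ the value of $r$ tends to $r_i$ while $\tfrac16\partial^3H/\partial x_1^3$ tends to $G_i$, and then finish by reading off the signs $r_1<0$ and $r_2\rho<0$. Your explicit computation of the $x_1^5$ coefficient from \eqref{eq:hypCoG} (noting that the $y^2$ quadratic part here, unlike the $xy$ in \eqref{eq:hyperbolic}, contributes no $r^3$ term) and your remark on frame-independence of the sign of the leading coefficient are additional care that the paper leaves implicit.
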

Note that this is a different use of the notation $G_1, G_2$ from \S\ref{ss:contact2}.
\begin{proof}
We use Proposition~\ref{prop:maxmin-hyper} to find the conditions for the points of 
these two branches to represent maximum/minimum of the absolute radius function. 
The function $r$ on the two tangential components $V^1$, $V^2$ takes the respective forms
$\widehat{r}_1 = r_1+ \ldots$\, and\,  $\widehat{r}_2=r_2 + \ldots$\,. 

The function $\partial^3H/\partial x^3$, on $V^1$ and $V^2$, has the respective signs of 
$G_1$ and $G_2$. 

According to Proposition~\ref{prop:maxmin-hyper} a point on $V^i$ has a maximum if and only if 
$\widehat{r}_iG_i<0$, that is if and only if $r_iG_i<0$. Clearly $r_1<0$, so we get the last inequality 
if and only if $G_1>0$. On the other hand, 
it is easy to check that $r_2\rho <0$; hence 
$r_2G_2<0$ if and only if $\rho\, G_2>0$.  
\end{proof}
\begin{remark*}
{\rm
The quantities $G_1, G_2$ arise elsewhere. Consider the branches of the intersection $M\cap T_pM$ 
of $M$ with its tangent plane at $p$, that is the plane curve $f(x,y)=0$. The local equations are}
\[ y = \ha\sigma\left(1+\sqrt{1-\rho}\right)x^2+\textstyle{\frac{1}{\sigma\sqrt{1-\rho}}}\displaystyle G_1x^3 + \ldots, \ \ \ 
y = \ha\sigma\left(1-\sqrt{1-\rho}\right)x^2 - \textstyle{\frac{1}{\sigma\sqrt{1-\rho}}}\displaystyle G_2x^3 + \ldots. \]
\end{remark*}

\begin{prop}\label{prop:relative-position_V1-V2}
Close to $g$ the tangential component $V^1$ is `below' $V^2$ for $x<0$ 
{\rm (that is, has lower $y$ values: $y_1<y_2$)} 
if and only if $G_1<G_2$.
\end{prop}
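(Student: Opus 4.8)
The plan is to read both statements off the cubic coefficients in the local parametrizations \eqref{eq:vt-CoG} of the two branches $V^1,V^2$. Since these branches share the same quadratic term $\ha\sigma\rho x^2$, their vertical separation is governed entirely by the cubic terms, so that $y_1-y_2=(B_1-B_2)x^3+O(x^4)$. For small $x<0$ we have $x^3<0$, hence $y_1<y_2$ if and only if $B_1-B_2>0$. Thus the geometric `below for $x<0$' condition is nothing but the positivity of $B_1-B_2$.

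Next I would invoke the value $B_1-B_2=8\sqrt{1-\rho}\,(\sigma b_2+c_1)$ recorded in the proof of Proposition~\ref{prop:VatCoG}. Because $g$ is a simple hyperbolic cusp of Gauss we have $\rho<1$, so $\sqrt{1-\rho}>0$, and therefore $B_1-B_2>0$ if and only if $\sigma b_2+c_1>0$. This reduces the `below' condition to the single inequality $\sigma b_2+c_1>0$.

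It then remains to express $G_1-G_2$ in the same terms. Using $G_i=-r_i^2b_2+r_ic_1-d_0$ I would factor
\[ G_1-G_2=(r_1-r_2)\bigl[-(r_1+r_2)b_2+c_1\bigr], \]
and substitute $r_1-r_2=-\sigma\sqrt{1-\rho}$ and $r_1+r_2=-\sigma$ (both immediate from the given expressions for $r_1,r_2$) to obtain $G_1-G_2=-\sigma\sqrt{1-\rho}\,(\sigma b_2+c_1)$. Since $\sigma=-b_1>0$ and $\sqrt{1-\rho}>0$, the condition $G_1<G_2$ is equivalent to $\sigma b_2+c_1>0$, exactly the inequality found above. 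Combining the two equivalences proves the proposition.

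The computation is routine; the only point requiring care is the sign bookkeeping. One must track the sign of $x^3$ for $x<0$ and use the standing conventions $\sigma>0$ (from $b_1=-\sigma<0$) and $\sqrt{1-\rho}>0$ (hyperbolic cusp), so that the negative factor $-\sigma\sqrt{1-\rho}$ in $G_1-G_2$ is not mistakenly allowed to flip the final equivalence. The check to emphasize is that $B_1-B_2>0$ and $G_1<G_2$ both collapse to the same sign of $\sigma b_2+c_1$, despite the coefficient of $(\sigma b_2+c_1)$ being positive in the former and negative in the latter.
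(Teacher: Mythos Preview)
Your proof is correct and follows essentially the same route as the paper: compute $G_1-G_2=-\sigma\sqrt{1-\rho}\,(\sigma b_2+c_1)$, compare with $B_1-B_2=8\sqrt{1-\rho}\,(\sigma b_2+c_1)$ from \eqref{eq:vt-CoG}, and conclude $G_1<G_2\iff B_1>B_2$. Your version is simply more explicit in spelling out the factorization $G_1-G_2=(r_1-r_2)\bigl[-(r_1+r_2)b_2+c_1\bigr]$ and the sign bookkeeping for $x<0$, which the paper leaves to the reader.
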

\begin{proof}
The relative size of $B_1$ and $B_2$ determines the relative position of $V^1$ and $V^2$. 
But we have: $G_1-G_2=-\sigma\sqrt{1-\rho}\,(\sigma b_2+c_1)$. Thus by (\ref{eq:vt-CoG}), 
$G_1<G_2$ if and only if $B_2<B_1$. 
\end{proof}


\begin{theorem}
There are $12$ generic types of hyperbolic cusps of Gauss, according to the 
relative positions of the flecnodal curve branches $F_r$, $F_\ell$ and of 
the branches $V^1_r$, $V^1_\ell$, $V^2_r$, $V^2_\ell$ of the V-curve, 
counting their maximum and minimum types. These types are listed in Figure~{\rm \ref{fig:CoG1}}.  
\end{theorem}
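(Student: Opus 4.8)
The plan is to read off the ``type'' of a simple hyperbolic cusp of Gauss $g$ as the combinatorial arrangement, in the tangent plane near $g$, of the six labelled half-branches $F_r,F_\ell,V^1_r,V^1_\ell,V^2_r,V^2_\ell$, together with the maximum/minimum label carried by each of $V^1$ and $V^2$. All the ingredients are already in hand: from the proof of Theorem~\ref{rho-classification_tangent} the flecnodal curve has $2$-jet curvature $c_{\sF}=\sigma\rho(2\rho-1)$, while by \eqref{eq:vt-CoG} the two vertex branches $V^1,V^2$ share the common $2$-jet curvature $c_{\sV}=\sigma\rho$ and separate only at third order, with $V^1$ below $V^2$ (for $x<0$) precisely when $G_1<G_2$ (Proposition~\ref{prop:relative-position_V1-V2}). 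The left/right splitting of each branch at $g$ is fixed by Remark~\ref{left-right-orient-F} for $F$ and by Proposition~\ref{prop:left-righ_Vcurve-at-g} for $V^1,V^2$, and the max/min labels are governed by the signs of $G_1$ and $\rho G_2$ (Proposition~\ref{prop:maxmin-CoG}). Thus the entire type is encoded by the sign data $\operatorname{sgn}\rho$, $\operatorname{sgn}G_1$, $\operatorname{sgn}G_2$ and $\operatorname{sgn}(G_1-G_2)$, and the proof reduces to enumerating these.

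First I would pin down the position of $F$ relative to the pair $\{V^1,V^2\}$. Since $c_{\sF}-c_{\sV}=2\sigma\rho(\rho-1)$ and $\sigma>0$, $\rho<1$ at a simple hyperbolic cusp of Gauss, this difference has sign $-\operatorname{sgn}\rho$, so $F$ lies above both vertex branches when $\rho<0$ and below both when $0<\rho<1$. Because $F$ is separated from $V^1,V^2$ already at second order whereas $V^1$ and $V^2$ differ only at third order, $F$ never lies between them; and as $c_{\sF}=c_{\sV}$ only at $\rho=0,1$, no finer information than $\operatorname{sgn}\rho$ enters here. Hence the $F$-versus-$V$ position contributes exactly two cases, recovering the coarse split (intervals $1$--$3$ versus $4$--$7$) of Theorem~\ref{rho-classification_tangent}.

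Next I would enumerate the remaining data: the ordering of $V^1,V^2$ and their two max/min labels. These are all functions of $(G_1,G_2)$, through $\operatorname{sgn}G_1$ (max/min of $V^1$), $\operatorname{sgn}(\rho G_2)$ (max/min of $V^2$), and $\operatorname{sgn}(G_1-G_2)$ (relative position). As $G_1,G_2$ are built from the free higher coefficients $b_2,c_1,d_0$, the pair $(G_1,G_2)$ ranges over all of $\RR^2$, and the three lines $G_1=0$, $G_2=0$, $G_1=G_2$ cut it into six open sectors, each realising a distinct triple $(\operatorname{sgn}G_1,\operatorname{sgn}G_2,\operatorname{sgn}(G_1-G_2))$. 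Pairing these six sectors with the two signs of $\rho$ from the previous step yields $2\times 6=12$ candidate types, which I would then match one-to-one with the panels of Figure~\ref{fig:CoG1}.

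The remaining work is to confirm that this sign data is a complete local invariant and that all twelve classes occur. Realisability is immediate, since $\rho$ is fixed by the low-order jet while $G_1,G_2$ involve the genuinely higher coefficients $b_2,c_1,d_0$, so $\operatorname{sgn}\rho$ and the sector of $(G_1,G_2)$ may be prescribed independently on a generic surface; distinctness is equally clean, since two classes that agree in all of $(\operatorname{sgn}\rho,\operatorname{sgn}G_1,\operatorname{sgn}G_2,\operatorname{sgn}(G_1-G_2))$ must coincide. The genuine obstacle is geometric fidelity: one must verify that the arrangement really is a nest of mutually tangent parabolic arcs with $F$ lying strictly outside the close pair $\{V^1,V^2\}$, and---most delicately---that the $V^1$-versus-$V^2$ stacking reverses between the half-branches $x>0$ and $x<0$ (because their separating term is of third order, so $x^3$ changes sign), so that each of the twelve sign-classes is drawn as a single coherent crossing picture rather than being conflated with another. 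Checking that these twelve pictures are exactly the panels of Figure~\ref{fig:CoG1}, with the orientation-fixed left/right labels and the $\rho$-twisted max/min label on $V^2$ correctly attached in each of the two $\rho$-regimes, completes the proof.
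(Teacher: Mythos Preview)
Your proposal is correct and follows essentially the same approach as the paper: both arguments split into the two cases $\rho<0$ versus $0<\rho<1$ for the position of $F$ relative to $V$, and then into six cases coming from the signs of $G_1$, $G_2$ and $G_1-G_2$ to determine the relative position of $V^1,V^2$ together with their max/min labels, giving $2\times 6=12$ types. Your parametrisation of the six $(G_1,G_2)$-cases as the sectors cut by the three lines $G_1=0$, $G_2=0$, $G_1=G_2$ is just a repackaging of the paper's enumeration ``$G_1<G_2$ or $G_2<G_1$, each with three sub-cases according to the position of $0$''.
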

\begin{proof}
The relative position `below/above' between the tangential components $V^1$, $V^2$ 
of the V-curve are given by the inequalities $G_1<G_2$ and $G_2<G_1$ 
(Proposition~\ref{prop:relative-position_V1-V2}); each one has three 
realisations, for example: $G_1<G_2<0$, $G_1<0<G_2$, $0<G_1<G_2$. 
The two relative positions `below/above' between the V-curve and the flecnodal curve 
are given by the inequalities $\rho<0$ and $\rho>0$ (Theorem\,\ref{rho-classification_tangent}). 
We get the type maximum or minimum for $V^1$ and $V^2$ from Proposition~\ref{prop:maxmin-CoG} applied 
to all these inequalities. Then we obtain the $12$ generic types shown in Figure~\ref{fig:CoG1}. 
\end{proof}

In Figure~\ref{fig:CoG1}, the flecnodal curve ($F_r$, $F_\ell$) is shown only 
in the left-hand diagrams. In all cases, the right and left branches of the two tangential components 
of the V-curve (denoted $1$, $2$) and of the flecnodal curve correspond to those indicated in the first diagram. 
Observe that for every position of $G_1$ and $G_2$, passing from $\rho<0$ to $\rho>0$ only the tangential 
component $V^2$ changes from maximum to minimum or vice versa. It is explained because if we pass from 
$\rho<0$ to $\rho>0$ continuously, at $\rho=0$ there is a flecgodron transition 
(Figure~\ref{fig:flecgodron-transition}) where only the component $V^2$ changes its type. 

\begin{figure}[H]
\centering
\includegraphics[scale=0.6]{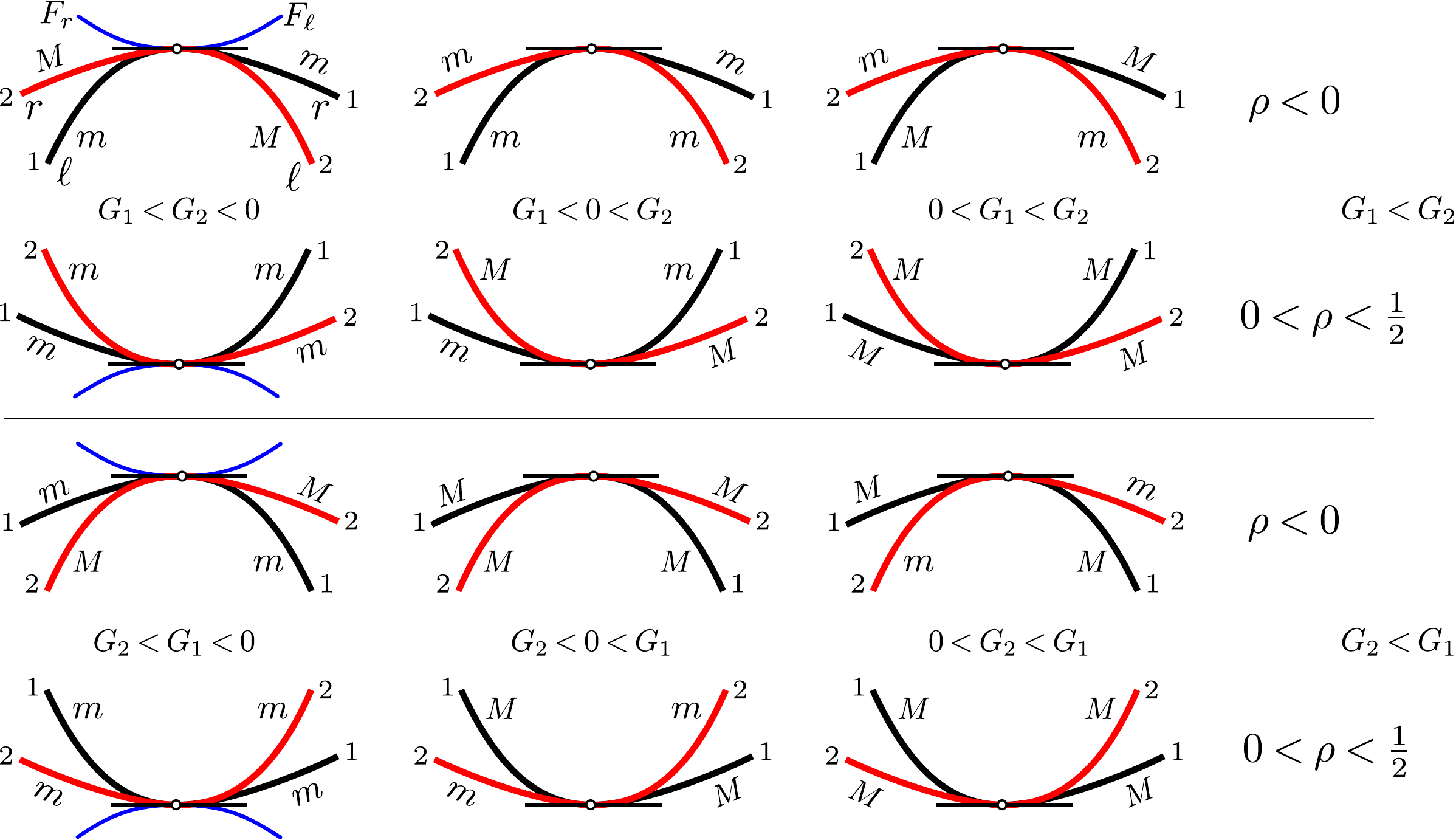}
\caption{\small The 12 generic local configurations of the flecnodal curve $F$ and of the 
components $V^1$, $V^2$ (denoted $1$ and $2$) counting their `left' ($\ell$), 
`right' ($r$) branches, and their maximum ($M$), minimum ($m$) types at a hyperbolic cusp of  Gauss. 
In all cases, the parabolic curve is `below' the other curves but is not drawn.}
\label{fig:CoG1}
\end{figure}

\section{Some codimension 1 transitions on the V-curve}\label{s:cod1}

In this section we shall investigate some transitions on the V-curve, and other curves, 
which occur in generic 1-parameter
families of surfaces, $\{M_t\}$, where $t$ is in some open interval of real numbers containing $t=0$. 
We consider mainly transitions in which the parabolic curve undergoes a transition. The results of this
section are obtained by exact calculation for $M_0$ and 
are largely experimental for nearby members of the family.

We consider the following cases:
\begin{enumerate}
\item[\ref{s:cod1}.1] The parabolic set of the family $M_t$  is  undergoing a 
`Morse transition' \\
6.1a: the parabolic set of $M_0$ has an isolated point;\\
6.1b: the parabolic set of $M_0$ has a 
 self-intersection consisting of two transverse smooth branches. 
 
 These two cases are referred to as `non-transversal $A_3$ transitions' in~\cite{BGT1,BGT2}, the
$A_3$ referring to contact between the tangent plane at the origin and $M_0$. Case 6.1a is $A_3^+$ and 6.1b is $A_3^-$.
\item[\ref{s:cod1}.2] The parabolic set is undergoing a `$D_4$ transition' as in ~\cite{BGT1,BGT2}. 
The symbol $D_4$ refers to the contact between the surface and its tangent plane at the origin and means
that the quadratic terms in the Monge form for $M$ vanish identically.  The cubic terms can have
one real root ($D_4^+$) or three ($D_4^-$).
\item[\ref{s:cod1}.3] $M_0$ has a degenerate cusp of Gauss, that is in the notation of Definition~\ref{def:coG}
and (\ref{eq:coG}), $b_0=0, \ c_0=\frac{1}{4}\sigma^2$.  This means that the contact of $M$ with its tangent
plane at the origin is of type at least $A_4$.  To ensure that the contact is no
higher than $A_4$ we require $\sigma^2b_2+2\sigma c_1+4d_0\ne 0$. 
Such a point of $M$ is called a `bigodron' in~\cite{ricardo} and an
`$A_4$ transition' in~\cite[\S3.2]{BGT1}; it occurs when an elliptic and a hyperbolic 
cusp of Gauss come into coincidence and disappear and is generic in a 1-parameter family of surfaces.
The parabolic curve remains nonsingular throughout.

\item[\ref{s:cod1}.4] The V-curve is {\em singular} because both components of (\ref{eq:singV}) are zero; this amounts
to saying that $d_0$ and $d_1$ are expressible in terms of coefficients $a, b_i, c_j$ in the hyperbolic case.

\item[\ref{s:cod1}.5] In \S\ref{ss:flecgodron} we describe a different kind of transition,
the `flecgodron' which is the coincidence of a biflecnode and a cusp of Gauss.

\end{enumerate}

\bigskip

The generic transitions of the parabolic set of a surface in 3-space are enumerated in~\cite{BGT1,BGT2}, the second
of these articles providing full mathematical details of results summarized in the first.   Since
the V-curve does not intersect the parabolic set except at hyperbolic  cusps of Gauss, the cases \ref{s:cod1}.1--2
are restricted to those where hyperbolic cusps of Gauss are created or destroyed.  For \ref{s:cod1}.1a this means
that an `elliptic island' appears in a hyperbolic region of $M_t$ as $t$ passes through 0, in which case two
hyperbolic cusps of Gauss are created on a newly created closed curve of the parabolic set.  See Figure~\ref{fig:singular-parabolic}.
For \ref{s:cod1}.2 only one of the two cusps of Gauss which are created or destroyed in the transition can intersect the V-curve.

\begin{figure}[H]
\centerline{\includegraphics[width=5in]{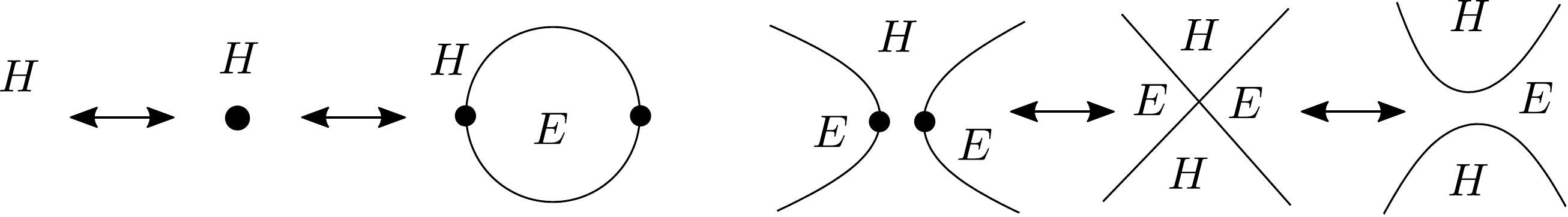}}
\caption{\small A diagram of local transitions on the parabolic curve $P$ in a 
generic {\color{black}$1$-parameter} family of surfaces as in \ref{s:cod1}.1a,b (from \cite[Figure 2]{BGT1}), 
where $H,E$ denote the hyperbolic and elliptic regions respectively.  \\
(left): from empty to a closed loop of $P$ which has two cusps of Gauss (the dots); \\
(right): through a crossing of smooth branches in which two cusps
of Gauss are created or destroyed. \\ 
These are the cases in which {\em hyperbolic} cusps
of Gauss are involved, so that the V-curve is involved too.
 }
\label{fig:singular-parabolic}
\end{figure}

\subsection{`Morse ($A_3$) transition' on the parabolic curve}\label{ss:Morse}

\begin{theorem}
Assume the parabolic set of a generic family of smooth surfaces 
$M_t$ has an $A_3$ (Morse) transition (at $t=0$) in a point $p$ of the surface $M_0$. Then 
\smallskip  

\noindent
$(a)$ If the parabolic set of $M_0$ locally consists of $p$, then the $V$-curve also consists of $p$.
\smallskip

\noindent
$(b)$ If the parabolic set of $M_0$ has two transverse smooth branches at $p$, then 
both components of the $V$-curve also consist of a crossing of two smooth branches at $p$.
\smallskip

\noindent
$(c)$ In case $b$, the two components of the $V$-curve have the same pair of tangent lines, 
and these tangents are distinct from the tangents to the parabolic curve. 
\end{theorem}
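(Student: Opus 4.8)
The plan is to analyze the $A_3$ (Morse) transition by setting up the Monge form for the singular member $M_0$ at the transition point $p$, where the height function (contact of $M_0$ with its tangent plane at $p$) has type $A_3$ but in a degenerate way that produces either an isolated parabolic point ($A_3^+$, case $a$) or a crossing of two smooth parabolic branches ($A_3^-$, case $b$). Following the methods of \S\ref{s:contact} and \S\ref{s:coG}, I would use the alternative contact map of \S\ref{ss:contact2}, since for the degenerate situation the direct computation via the parametrized-surface, two-equations-for-the-circle approach is stated to be easier. The starting observation is that since the V-curve meets the parabolic curve only at hyperbolic cusps of Gauss (Proposition on page with Lemma~\ref{prop:rs}), the behaviour of the V-curve at $p$ is governed entirely by the cusps of Gauss that appear on the (possibly degenerate) parabolic set of $M_0$ and the nearby $M_t$.

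For part $(a)$, the plan is to observe that when the parabolic set of $M_0$ locally reduces to the isolated point $p$ (the $A_3^+$ case), there are no hyperbolic points accumulating at $p$ in $M_0$, so the V-curve, which lives in the hyperbolic domain, can only meet $\{p\}$ at $p$ itself; a direct computation of the reduced contact conditions \eqref{eq:Vatorigin} at $p$ should show that no real 5-point-contact circle exists in a punctured neighbourhood, forcing the V-curve to collapse to the single point $p$. For part $(b)$, the $A_3^-$ case, the two transverse smooth parabolic branches each carry (generically) cusps of Gauss, and near each hyperbolic cusp of Gauss Proposition~\ref{prop:VatCoG} gives two smooth tangential branches of the V-curve. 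As the two parabolic branches cross at $p$ in the limit $t=0$, these V-curve branches must organize into a crossing. The technical heart is to write the two-variable contact map $(H_1,H_2)$ for $M_0$ with the appropriate degenerate Monge coefficients, compute the zero set defining the V-curve, and verify it factors into two smooth branches through $p$ for each of $V_\ell$ and $V_r$, i.e. a genuine crossing of two smooth curves.

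For part $(c)$, the plan is to extract the leading (quadratic) jets of the two V-curve branches and of the two parabolic branches at $p$ from the same normal form, and compare their tangent directions. I expect that by symmetry of the $A_3^-$ normal form the two V-components share the same unordered pair of tangent directions, and that a separate computation of the tangent lines to the parabolic crossing (from $f_{xx}f_{yy}-f_{xy}^2=0$, as in Lemma~\ref{lemma:Monge godron}) yields a distinct pair; the distinctness should come from a generic open condition on the coefficients, verified by checking that the relevant resultant or discriminant is nonzero.

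The main obstacle I anticipate is organizing the degenerate Monge form correctly: at an $A_3$ Morse transition the \emph{standard} hyperbolic normal form \eqref{eq:hyperbolic} and the cusp-of-Gauss form \eqref{eq:coG} are both inadequate, since the quadratic part of $f$ must degenerate along a whole arc (or to a point) rather than at an isolated parabolic point. I would therefore need to adopt the correct $A_3^\pm$ normal form from \cite{BGT1,BGT2} (the ``non-transversal $A_3$'' models) and recompute the contact conditions from scratch in those coordinates, which is where the bookkeeping is heaviest and where the claim about tangents being distinct from the parabolic tangents in part $(c)$ must be pinned down by an explicit but nonvanishing coefficient comparison. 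The conceptual steps—localizing the V-curve at cusps of Gauss and invoking Proposition~\ref{prop:VatCoG}—are straightforward; the difficulty is purely in carrying the degenerate normal form through the contact computation cleanly enough to read off the two tangent directions.
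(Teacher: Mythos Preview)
Your plan has a genuine conceptual gap: the Morse-transition point $p$ is \emph{not} a cusp of Gauss. In the Monge form \eqref{eq:coG} the parabolic curve has linear part $3b_0x+b_1y$, so an $A_3$ Morse singularity of the parabolic set forces $b_0=b_1=0$, whereas a cusp of Gauss requires $b_0=0$ and $b_1\ne 0$ (Lemma~\ref{lemma:Monge godron}). Hence Proposition~\ref{prop:VatCoG} does not apply at $p$, and the strategy of ``localizing the V-curve at cusps of Gauss'' on the two parabolic branches cannot be carried out for $M_0$ itself. Your argument for part $(a)$ is also incorrect: when the parabolic set of $M_0$ is an isolated point (the ``elliptic island appearing in a hyperbolic region'' case of Figure~\ref{fig:singular-parabolic}), the punctured neighbourhood of $p$ consists entirely of \emph{hyperbolic} points, so you cannot conclude the V-curve is empty there merely by lack of hyperbolic points.

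The paper's route is much more direct and avoids any reduction to the cusp-of-Gauss picture. One works in the Monge form with $b_0=b_1=0$, so
\[
z=y^2+b_2xy^2+b_3y^3+c_0x^4+c_1x^3y+c_2x^2y^2+\ldots,
\]
and uses the \emph{standard} reduced contact function $H$ of \S\ref{ss:standard} (not the alternative map of \S\ref{ss:contact2}). At the origin the $5$-point-contact condition gives $r^2+c_0=0$, so one needs $c_0<0$ and obtains two values $r=\pm C_0$ with $C_0=\sqrt{-c_0}$. A direct computation of the $2$-jet of the V-curve equation in the $(x_0,y_0)$-plane yields the \emph{same} quadratic form for both values of $r$, and its discriminant is $16C_0^4\Delta$, a positive multiple of the discriminant $\Delta=8b_2^2c_0-8c_0c_2+3c_1^2$ of the $2$-jet of the parabolic-curve equation. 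Thus the V-curve has an isolated point or a crossing exactly when the parabolic curve does, giving $(a)$ and $(b)$ simultaneously; for $(c)$ one checks that coincidence of the V-curve tangents with the parabolic tangents forces $\Delta=0$. No special normal form from \cite{BGT1,BGT2} is needed beyond setting $b_0=b_1=0$.
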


\begin{proof}
For a surface $M_0$ in Monge form (9) the parabolic curve has local equation 
\begin{equation}\label{P-local-eq}
3b_0x+b_1y+(3b_0b_2-b_1^2+6c_0)x^2+(9b_0b_3-b_1b_2+3c_1)xy+(3b_1b_3-b_2^2+c_2)y^2=0,  
\end{equation}
up to order $2$ in $x$, $y$. It is singular provided $b_0=b_1=0$ so that the Monge form becomes 
\[z=y^2+b_2xy^2+b_3y^3+c_0x^4+c_1x^3y+c_2x^2y^2+c_3xy^3+c_4y^4+d_0x^5+\ldots \,.\]

The parabolic curve has a Morse (i.e., nondegenerate) singularity provided the discriminant 
$\Delta:=8b_2^2c_0-8c_0c_2+3c_1^2$ of the quadratic terms in \eqref{P-local-eq} is nonzero. So we get    
\begin{equation}\label{cross-isolated}  
\mbox{a crossing for } \Delta>0, \qquad \mbox{an isolated point for } \Delta<0\,. 
\end{equation} 

To examine the V-curve of $M_0$, we use the reduced contact function $H$ (in Definition 2.1) 
and find that the values of $r$ for $5$-point contact at the origin are given by $r^2+c_0=0$. 
Thus we require $c_0<0$ for real 5-point contact circles ($c_0=0$ is of higher codimension). 
In this case, write $C_0=\sqrt{-c_0}$ so that $r=\pm C_0$ and $\Delta=-8b_2^2C_0^2+8C_0^2c_2+3c_1^2$. 

The local equations of both components of the $V$-curve (for the corresponding values $r=\pm C_0$) 
have the same $2$-jet\,:  
\begin{equation}\label{loc-eq-V-curve}
8C_0^4x_0^2-4C_0^2c_1x_0y_0+(4C_0^2b_0^2-4C_0^2c_2-c_1^2)y_0^2=0\,.
\end{equation}
Its discriminant, \,$\widehat{\Delta}:=16C_0^4(-8b_2^2C_0^2+8C_0^2c_2+3c_1^2)$,\, is a 
positive multiple of the discriminant $\Delta$ of the $2$-jet of the local equation of 
the parabolic curve, \,$\widehat{\Delta}=16\,C_0^4\Delta$. 

Thus using this last equality together with \eqref{cross-isolated} we prove items $a$ and $b$.  
\smallskip

\noindent
\textit{Proof of item $c$}: The pairs of tangents of the two components of the $V$-curve are defined by the 
same quadratic form \eqref{loc-eq-V-curve}, and one can check that the coincidence of 
these tangents with the tangents to the parabolic curve would lead to $\Delta=0$. 
\end{proof}

\noindent
\textbf{Remark}. Note that $c_0<0$ is also the condition for the intersection between $M_0$ and its tangent plane 
$T_pM_0$ at $p$ to consist of two tangential curves---a tacnode---rather than an isolated point. 

\smallskip

 The transitions occurring on the V-curves in a generic family of surfaces $M_t$ with $M_0$ as above are illustrated in
 Figure~\ref{fig:singular-parabolic-transition}.
 
 \smallskip\noindent
 \begin{rem}\label{note:blue-red}
 {\bf Note on the Figures~\ref{fig:singular-parabolic-transition}, \ref{fig:D4minus},
 \ref{fig:D4plus1branch}, \ref{fig:D4plus3branches}, \ref{fig:D4plus5branches} and
 \ref{fig:A4}.}
In these diagrams the radius of the 5-point contact circle
varies continuously along a segment of the V-curve with a single colour. 
Thus at a cusp of Gauss, the branches denoted by
$V^1$ and $V^2$ in (\ref{eq:vt-CoG}) and
Proposition~\ref{prop:left-righ_Vcurve-at-g} 
maintain the same colour on each side, and the left-rightness changes.
Of course, left-rightness stays constant along a segment of the V-curve 
with a single colour away from
cusps of Gauss.
\end{rem}
 
\begin{figure}[!h] 
\begin{center}
\includegraphics[width=5.5in]{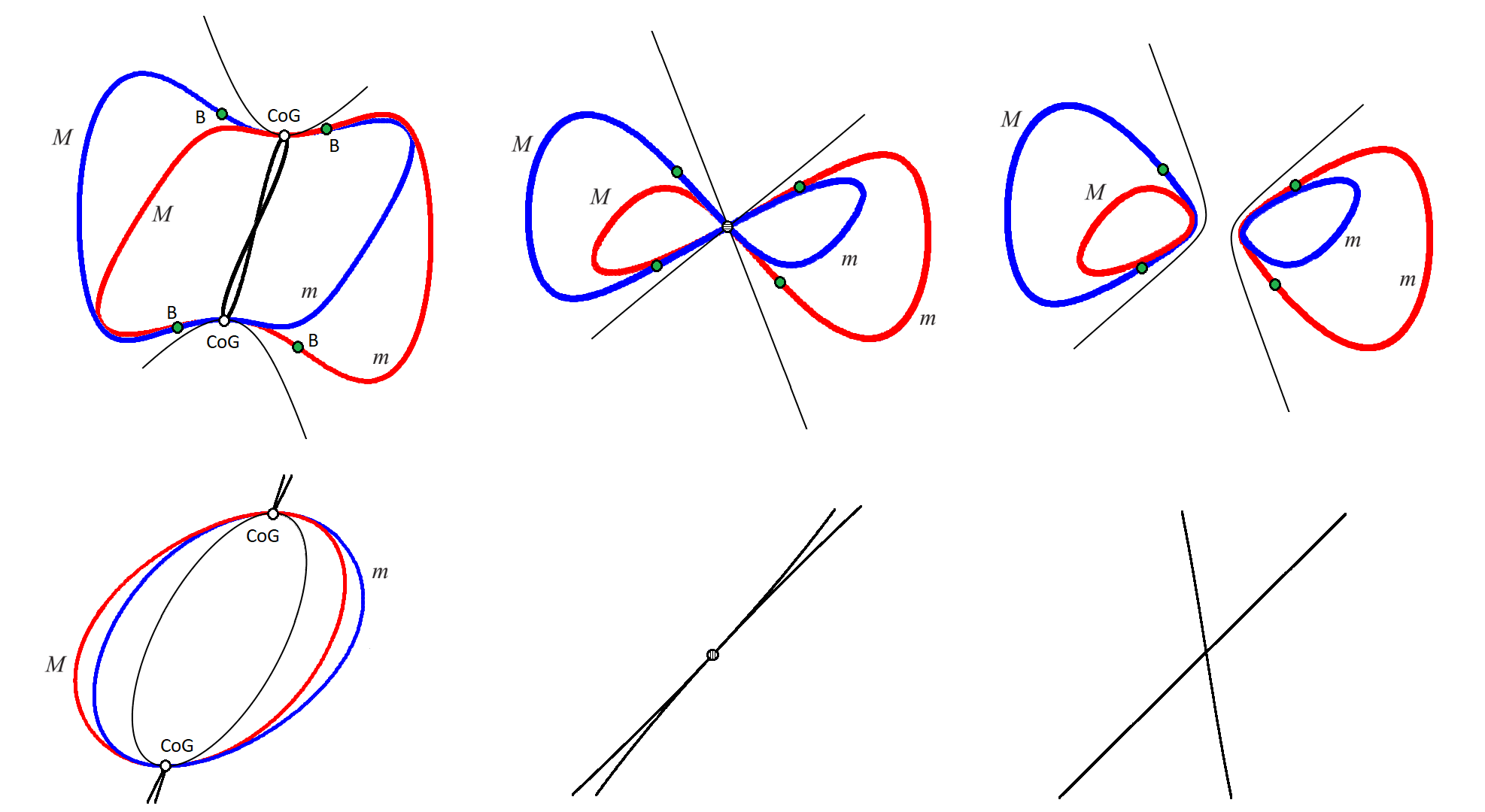}
\end{center}
\caption{\small Transitions on the parabolic set (thin black), the flecnodal curve (thick black) and V-curve (red and blue) in a generic
family of surfaces $M_t$ as the parabolic set undergoes a Morse transition. Here the $m$ or $M$ labelling of the V-curves indicates whether the relevant 5-point contact
circle is a minimum or maximum of the absolute radius of curvature as described in Section \ref{ss:maxmin}.  The small circles (labelled B in the top left diagram)
represent bi-vertices which separate the minimum and maximum components: bi-vertices are not involved in the transition. CoG stands for (hyperbolic) cusp of Gauss.
 Above: a crossing on the parabolic set,
case~\ref{s:cod1}.1b; below: an isolated point, case~\ref{s:cod1}.1a. The family of surfaces in the upper diagram is
$z = y^2 - x^4 + x^3y + x^2y^2  + tx^2  $ for small $t$ with $t=0$ in the middle, and 
$z = y^2 - x^4+ x^3y - x^2y^2  + tx^2$ for the lower diagram.}
\label{fig:singular-parabolic-transition}
\end{figure}

\subsection{$D_4$ transition on the parabolic curve (``flat umbilic'')}\label{ss:D4}

The label `$D_4$' refers to contact between $M_0$ and its tangent plane to $M_0$ at the origin.
In this case the quadratic terms of the Monge form of the surface $M_0$ are absent. Such a 
point is also called a {\em flat umbilic} in contrast to a generic umbilic which has quadratic
terms of the form $\kappa(x^2+y^2), \kappa\ne 0$.

By rotating the coordinates and scaling equally in all directions we may assume 
that the Monge form of
$M_0$ is
\begin{equation}
z=x^2y+b_2xy^2+b_3y^3+c_0x^4+c_1x^3y+c_2x^2y^2+c_3xy^3+c_4y^4+\ldots,
\label{eq:MongeD4}
\end{equation}
with one root of the cubic terms along the $x$-axis $y=0$. The two cases are distinguished by
\[ D_4^+ (\mbox{one real root}): b_2^2<4b_3; \qquad \ D_4^- (\mbox{three real roots}): b_2^2>4b_3.\]
We assume from now on that $b_2^2 \ne 4b_3$, so that the contact between $M_0$ and the
tangent plane $T_pM_0$ at {\color{black}$p$} is `no worse' than $D_4$.

The parabolic curve of $M_0$ has the form $x^2+b_2xy+(b_2^2-3b_3)y^2 + \mbox{h.o.t.} = 0$
with discriminant of the quadratic terms equal to $-3(b_2^2-4b_3)$. Thus (see~\cite[p.298]{BGT1},
noting that the labels $D_4^\pm$ on Figure~4 are the wrong way round):\\
$D_4^+$: one branch of $M_0\cap T_pM$  and the parabolic curve has a crossing of smooth branches\\
$D_4^-$: three transverse branches of $M_0\cap T_pM_0$ and the parabolic curve has an isolated point.

Looking for circles in the $(x,y)$-plane which have 5-point contact with $M_0$ at the origin we find that
for each real branch of $z=0$ there is a circle centred on the line perpendicular to that branch;
for the branch tangent to $y=0$ the centre of this circle is at $\left(0,-\frac{1}{2c_0}\right)$
(and the radius is of course
$\frac{1}{2|c_0|}$). 

{\em We shall make the generic assumption $c_0\ne 0$ in what follows.}

 But the second circle having 5-point contact and centre $(0,r)$ {\color{black}shrinks to} a 
 `circle of radius {\color{black}$r=0$}'.  We shall see in \S\ref{sss:radius0}
that indeed
there is at least one branch of the V-curve through the origin on which 
the radius of the 5-point contact circle tends to 0 at the origin.

To analyse this situation for the transitional surface $M_0$ we shall adopt the `alternative' approach to
the contact function as described in~\S\ref{ss:contact2}, using the mapping
$H=(H_1,H_2)$ described there.  
The V-curve consists of those points $(x_0, y_0, f(x_0,y_0))\in M_0$ for which $u,v$ exist such that $H$
is $\mathcal K$-equivalent to an $A_{\ge 4}$ singularity at $p=q=0$. 

\subsubsection{{\color{black}Osculating} circle of radius $1/|2c_0|$}

We consider here the branch of the intersection of $M_0$ with its tangent plane $z=0$ at the origin
which is tangent to $y=0$.  For $D_4^+$ this is the only real branch of the intersection while for
$D_4^-$ the same argument applies to each of the three real branches of the intersection.

Consider the circle having 5-point contact with $M_0$ at the origin and centre  
$(u,v)=\left(0,-\frac{1}{2c_0}\right)$. We shall expand $H$ about $(x_0,y_0,u,v,p,q)$ 
$=
\left(0,0,0,-\frac{1}{2c_0},0,0\right)$, substituting $v=V-\frac{1}{2c_0}$ so that $V$ is small.
The coefficient of $q$ in $H_2$ then works out as $\frac{1}{c_0}\ne 0$ so that we can
solve $H_2=Q$ say for a function $q=Q(p,x_0,y_0,u,V)$.  Substituting in $H_1$ we can
then put $Q=0$ in $H_1$ since we are classifying $H$ up to contact equivalence and terms
containing $Q$ can therefore be removed from $H_1$. The result is a mapping
$(\overline{H}_1, Q)$ say where $\overline{H}_1$ is a function of $p,x_0,y_0,u,V$.
It is a straightforward matter to check that $\overline{H}_1$ is divisible by $p^2$,
corresponding to the fact that the circle always has at least 2-point contact with $M_0$
at $(x_0,y_0,f(x_0,y_0))$. Then the second, third and fourth derivatives of $\overline{H}_1$
evaluated at $p=0$ give 3 equations in $x_0,y_0,u,V$ the solution to which is the preimage of one branch of the 
V-curve; the V-curve itself is the projection of this set to the $(x_0,y_0)$ plane.  In fact in this case
an argument similar to that in \S\ref{ss:hyper} shows that, provided $c_0\ne0$ as above, the 
solution set in $(x_0,y_0,u,v)$-space is smooth, parametrized locally by $x_0$, and its projection to the 
$(x_0,y_0)$ plane is therefore smooth.
In fact the V-curve is tangent to the $x$-axis, with local parametrization in the $(x_0,y_0)$ plane
of the form $y_0=-2c_0x_0^2 + \mbox{h.o.t}$. 

Assume, in the notation of (\ref{eq:MongeD4}), that $b_2^2 \ne 4b_3$ and $c_0 \ne 0$. Then we have
the following.
\begin{prop}
In a generic $1$-parameter family of smooth surfaces $M_t$ having a $D_4^\pm$ transition at $t=0$, 
each smooth branch of $M_0\cap T_pM_0$ {\rm (the local intersection of $M_0$ with its tangent plane at $p$)} 
has a tangential smooth branch of the $V$-curve. 

The corresponding 5-point contact circle or circles at $p$ have nonzero radius; 
for the branch of $M_0\cap T_pM_0$ tangent
to the $x$-axis this radius is $1/|2c_0|$ in the notation of (\ref{eq:MongeD4}).
\label{prop:nonzeroradius}
\end{prop}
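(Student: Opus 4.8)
The plan is to handle first the branch of $M_0\cap T_pM_0$ tangent to the $x$-axis, for which almost everything is already established in the discussion preceding the proposition, and then to obtain the remaining branches---which are present only in the $D_4^-$ case---by a rotation of coordinates together with one genericity input.

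For the $x$-axis branch I would simply assemble the preceding computation. Expanding the alternative contact map $H=(H_1,H_2)$ of \S\ref{ss:contact2} about $\left(0,0,0,-\frac{1}{2c_0},0,0\right)$ and using $c_0\ne0$ to solve $H_2=0$ for $q$, reduce $H_1$ modulo the solved variable, and factor out the forced $p^2$, the argument parallel to \S\ref{ss:hyper} shows that the solution set in $(x_0,y_0,u,v)$-space is smooth and parametrised by $x_0$. Its projection is therefore a smooth branch of the V-curve, with leading term $y_0=-2c_0x_0^2+\cdots$, so it is tangent to the $x$-axis and hence tangential to the branch $y=-c_0x^2+\cdots$ of $M_0\cap T_pM_0$. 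The corresponding $5$-point contact circle has centre $\left(0,-\frac{1}{2c_0}\right)$ and passes through the origin, so its radius equals the distance from that centre to the origin, namely $1/|2c_0|$, which is nonzero exactly because $c_0\ne0$. Geometrically this circle is the osculating circle of the branch $y=-c_0x^2+\cdots$, whose curvature at the origin is $2|c_0|$.

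For the other branches, which occur only when $b_2^2>4b_3$ (three distinct real root lines of the cubic), I would rotate the axes so that the chosen branch becomes tangent to the new $x$-axis and re-expand $M_0$ in the form \eqref{eq:MongeD4}. The vanishing of the quadratic part and the presence of a cubic root along the new $x$-axis are preserved, and the role of $c_0$ is taken by the new quartic coefficient $\tilde c_0$; since the three root lines are distinct, the leading cubic coefficient adapted to the branch is nonzero, and $\tilde c_0$ is then proportional, with nonzero constant, to the curvature of that branch at the origin. The verbatim reduction produces a smooth V-curve branch tangent to that branch, whose $5$-point contact circle is the osculating circle of nonzero radius $1/|2\tilde c_0|$.

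The step requiring care is the genericity input $\tilde c_0\ne0$ for every branch, i.e.\ that each branch of $M_0\cap T_pM_0$ has nonzero curvature at $p$. I would argue that the vanishing of a branch's curvature is a condition of codimension at least one on the quartic coefficients---it is not implied by the $D_4$ and $D_4^\pm$ conditions, which constrain only the cubic---so it fails for a generic $M_0$, exactly as the companion assumption $c_0\ne0$ was imposed for the $x$-axis branch. Granting $\tilde c_0\ne0$, the smoothness argument goes through unchanged, because the reduction of \S\ref{ss:hyper} used only the nonvanishing of the relevant quartic coefficient, so no further calculation is needed.
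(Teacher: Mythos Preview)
Your proposal is correct and follows essentially the same route as the paper. The paper's argument is precisely the computation you assemble for the $x$-axis branch (expand $H$ about $(0,0,0,-\tfrac{1}{2c_0},0,0)$, use that the $q$-coefficient of $H_2$ is $1/c_0\ne0$ to reduce, factor out $p^2$, and apply the Jacobian argument of \S\ref{ss:hyper}); for the remaining branches in the $D_4^-$ case the paper simply says ``the same argument applies to each of the three real branches'', whereas you make the rotation and the genericity condition $\tilde c_0\ne0$ explicit---a welcome clarification rather than a different method.
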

   
\subsubsection{Degenerate osculating circle of (limiting) radius 0}\label{sss:radius0}

The 5-point contact {\color{black}degenerate} circle {\color{black}of radius zero} is obtained by
expanding $H$ as power series in $x_0,y_0,u,v,p,q$, all of which are small. We find the following:
\begin{eqnarray}
H_1&=&
-y_0p^2-2(x_0+b_2y_0)pq-(b_2x_0+3b_3y_0)q^2-p^2q-b_2pq^2-b_3q^3+\mbox{
degree} \ge 4  \nonumber\\
H_2&=&-2(u-x_0)p-2(v-y_0)q+p^2+q^2 + \mbox{ degree} \ge 6. \label{eq:HD4}
\end{eqnarray}
The V-curve consists of those points $(x_0, y_0, f(x_0,y_0))\in M_0$ 
for which $u,v$ exists such that $H$
is $\mathcal K$-equivalent to an $A_{\ge 4}$ singularity at $p=q=0$.   It is convenient to
substitute $U=u-x_0, V=v-y_0$ so that the quadratic terms of $H_2$ take the form
$p^2+q^2-2pU-2qV$.

\begin{remark*}{\rm
Evaluating $H$ at $x_0=y_0=u=v=0$
we obtain $(-q(p^2+b_2q+b_3q^2)+\ldots, p^2+q^2+\ldots)$, which in the {\em complex}
$\mathcal K$ classification of~\cite{dimca-gibson}  is equivalent to the $\mathcal K$-simple
germ $B_{3,3}: (x,y)\mapsto (xy,x^3+y^3)$. According to the list of specializations
in~\cite[p.278]{dimca-gibson} this singularity has $A_5$ singularities in its neighbourhood; however
the unfolding by parameters $x_0, y_0, u, v$ will not be versal; in our situation of a
generic 1-parameter family of surfaces we do not
expect to  find more degenerate singularities than $A_4$.
}
\end{remark*}
We shall approach this case by neglecting terms of degree $\ge 6$ in $H_2$ and solving $H_2=Q$ say exactly
for $q$ as a function of $x_0, y_0, U, V, p, Q$ in order to reduce $H_2$ to $Q$.
When this is done we can replace $Q$ by 0 and $H$ takes the form $(\overline{H}_1, Q)$, say
where $\overline{H}_1$ is a function of $x_0, y_0, U, V, p$. Both this function and its derivative
with respect to $p$ vanish at $p=0$, since the circle always has at least 2-point contact with $M_0$
at $(x_0,y_0,f(x_0,y_0))$.  The conditions we want to impose are, as usual, that the second, third
and fourth derivatives vanish at $p=0$. 

When this is done we find (after a rather tedious calculation) that $x_0, y_0$ can be expressed in
terms of $U,V$ but that the relation between $U,V$ has lowest terms a homogeneous quintic:
\begin{eqnarray}
b_2(2b_2^2-7b_3)U^5+(4b_2^2b_3-5b_2^2-12b_3^2+14b_3)U^4V&&\nonumber \\
-b_2(2b_2^2-b_3-3)U^3V^2+(3b_2^2+6b_3-2)U^2V^3-5b_2UV^4+2V^5&=&0. \label{eq:quintic}
\end{eqnarray}
 A solution $(U,V)=(k,1)$ of this quintic gives a branch of the V-curve with slope 
\[ \frac{k(2-b_2k)}{3b_3k^2-2b_2k+1}\,,\]
in the $(x_0,y_0)$-plane. 
This slope cannot be zero provided $b_2^2\ne 4b_3$ as above, so that the V-curve branch 
cannot be tangent to the $x_0$ axis and by symmetry cannot be tangent to any of the branches 
of $M_0\cap T_pM_0$.

It can be shown that if $b_2^2 > 4b_3$ the quintic equation has negative discriminant (that is
for $D_4^-$), which indicates three real branches in the $(U,V)$-plane and therefore three
real branches of the V-curve in the $(x_0,y_0)$-plane, with slopes given as above by the three
real roots of the quintic. But if $b_2^2<4b_3 \  (D_4^+)$ there
can be one, three or five real branches of the V-curve. 

We sum up this situation as follows.
\begin{prop}
In addition to the smooth branches listed in Proposition~\ref{prop:nonzeroradius} the V-curve has 
other smooth branches, whose corresponding 5-point contact circle has radius $0$:\\
$D_4^-$: one for each of the three real branches of $M_0\cap T_pM_0$, in each case not tangent 
to the latter branch;\\
$D_4^+$: either 1, 3 or 5 such branches depending on the cubic terms of $M_0$ at the origin. 
For more information see below.
\label{prop:zeroradius}
\end{prop}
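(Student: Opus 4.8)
The plan is to read off the radius-$0$ branches of the V-curve directly from the binary quintic form \eqref{eq:quintic} already obtained in the reduction above. Since that quintic is homogeneous in $(U,V)$, its real zeros are directions in the real projective line $\mathbb{RP}^1$, and the reduction shows that each direction $(U,V)=(k,1)$ yields exactly one smooth branch of the V-curve, with the slope $\frac{k(2-b_2k)}{3b_3k^2-2b_2k+1}$ recorded above. First I would check that the leading coefficient $b_2(2b_2^2-7b_3)$ is generically nonzero, so that the direction $(U,V)=(1,0)$ is not a zero and every real branch comes from a genuine real root $k$ of the dehomogenised quintic. The number of radius-$0$ branches is then the number of real linear factors of \eqref{eq:quintic}; since a real binary quintic splits into five linear factors over $\mathbb{C}$ with the non-real ones occurring in conjugate pairs, this number is automatically odd, i.e.\ $1$, $3$, or $5$. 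This already produces the list of possibilities asserted in the $D_4^+$ case.

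Next I would establish the non-tangency statement. The slope above vanishes only at $k=0$ or $k=2/b_2$. Evaluating \eqref{eq:quintic} at $(U,V)=(0,1)$ gives the nonzero constant $2$, so $k=0$ is never a root; and a direct substitution shows that the remaining possibility $k=2/b_2$ can be a root of \eqref{eq:quintic} only when $b_2^2=4b_3$. Hence, for $b_2^2\neq4b_3$, no branch has slope $0$, so no branch is tangent to the $x$-axis, which is the tangent line to the branch of $M_0\cap T_pM_0$ along $y=0$. Applying the same argument after re-choosing coordinates so that each of the real root-directions of the cubic part of \eqref{eq:MongeD4} lies along the $x$-axis—this being exactly the normalisation used to put $M_0$ in the form \eqref{eq:MongeD4}—shows that no radius-$0$ branch is tangent to any branch of $M_0\cap T_pM_0$.

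It then remains to pin down the exact count in the $D_4^-$ case and to show that $1$, $3$, and $5$ are all realised in the $D_4^+$ case; this is where the discriminant of the quintic enters. For a binary quintic with distinct factors, the sign of the discriminant $\Delta$ fixes the parity of the number of conjugate pairs: $\Delta<0$ corresponds to exactly three real factors, while $\Delta>0$ corresponds to one or five. I would therefore compute $\Delta$ as a polynomial in $b_2,b_3$ and analyse its sign on the two regions cut out by the parabola $b_2^2=4b_3$. For $D_4^-$ ($b_2^2>4b_3$) the target is $\Delta<0$ throughout, giving exactly three real branches, which I would then associate with the three branches of $M_0\cap T_pM_0$ via the coordinate re-choice of the previous paragraph. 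For $D_4^+$ ($b_2^2<4b_3$) I would instead exhibit explicit values of $(b_2,b_3)$ in this region realising $\Delta<0$ and $\Delta>0$, and within the latter both the one-root and five-root cases, confirming that $1$, $3$, and $5$ all occur.

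The main obstacle is the discriminant computation in the last paragraph, specifically proving that $\Delta$ is strictly negative on the whole region $b_2^2>4b_3$. The discriminant of a quintic is a large polynomial, so the difficulty is algebraic rather than conceptual: the cleanest route would be to find a factorisation of $\Delta$ that is manifestly negative on this region—ideally with the sign controlled by $4b_3-b_2^2$—or, failing that, to show $\Delta$ does not vanish there (so its sign is constant by connectedness) and evaluate it at one convenient point to fix that sign as negative. I expect this step to need computer algebra, whereas the correspondence between branches and real roots, the parity count, and the non-tangency are essentially forced by the reductions already carried out.
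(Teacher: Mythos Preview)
Your proposal is correct and follows essentially the same route as the paper: both read the radius-$0$ branches off the real roots of the quintic \eqref{eq:quintic}, establish non-tangency via the slope formula (the paper simply asserts the slope is nonzero for $b_2^2\ne 4b_3$ and invokes symmetry; you spell out the check at $k=0$ and $k=2/b_2$), and appeal to the sign of the discriminant of the quintic to force exactly three real roots in the $D_4^-$ region while leaving $1,3,5$ possible for $D_4^+$. The one substantive addition in the paper is that, rather than exhibiting ad hoc $(b_2,b_3)$ values to realise the three $D_4^+$ cases, it rewrites the cubic part in the complex form \eqref{eq:betacubic} and plots the discriminant locus in the $\beta$-plane (Figure~\ref{fig:umbilic-plot3}), which gives a global picture of where each count occurs; your example-based approach would certify existence equally well but without that stratification.
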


This situation can be better illustrated  by a change of normal form; in fact we shall adopt a procedure
analogous to that used to separate the ``lemon/star/monstar'' cases of a generic umbilic point, as 
explained in, for example,~\cite{Porteous}.  It is an elementary calculation to check that every real
cubic form in $x,y$ can be transformed, by rotation and scaling in the $(x,y)$-plane, and then
writing $z=x+\ii y$  ($\ii=\sqrt{-1}$), into the special form
\begin{equation}
 z^3 + 3\overline{\beta}z^2\overline{z}+3\beta z\overline{z}^2+ \overline{z}^3
 \label{eq:betacubic}
 \end{equation}
where $\beta$ is a complex number. The only exception is a cubic form $ax^3+bx^2y+axy^2+by^3$, which
equals $(ax+by)(x^2+y^2)$ and so has only one real root $ax+by=0$. So far as the case $D_4^+$ is
concerned we can ignore this exception, since the cubic form of $M_0$ has three real roots.  The 
conditions for (\ref{eq:quintic}) to have 1, 3 or 5 real roots can then be expressed in terms of $\beta=
\beta_1 + \ii\beta_2$
and the resulting diagram Figure~\ref{fig:umbilic-plot3} in the $\beta$ plane  
has a pleasing symmetry and compactness.

For the record, the result of expressing the quintic form (\ref{eq:quintic}) in terms of $\beta_1$ and
$\beta_2$ is
\begin{eqnarray}
3(\beta_1^3+\beta_1\beta_2^2-3\beta_1^2+5\beta_2^2+3\beta_1-1)U^5+
3\beta_2(\beta_1^2+\beta_2^2-1)U^4V&&\nonumber \\
+6(\beta_1-1)(\beta_1^2+\beta_2^2-1)U^3V^2+2\beta_2
(3\beta_1^2+3\beta_2^2+40\beta_1+17)U^2V^3&&\nonumber\\
+(3\beta_1^3+3\beta_1\beta_2^2-29\beta_1^2+11\beta_2^2+17\beta_1+9)UV^4+
\beta_2(3\beta_1^2+3\beta_2^2+16\beta_1+5)V^5.
\label{eq:betaquintic}
\end{eqnarray}
The discriminant of this quintic form is a product of two factors, one of which is the cube of
the discriminant of the cubic form (\ref{eq:betacubic})
of $M_0$ and the other has degree 10 in $\beta_1$ and $\beta_2$.
Both are invariant under the rotation $\beta\mapsto\beta\exp(2\pi\ii/3)$,
as is clear from Figure~\ref{fig:umbilic-plot3} below. 
Figures~{\color{black}\ref{fig:D4minus},} \ref{fig:D4plus1branch},
\ref{fig:D4plus3branches} illustrate the V-curve itself. The central
diagram in each case follows from the calculations above and the outer diagrams, representing
the evolution of the V-curve 
in a generic family of surfaces, are produced from an example.

\begin{figure}[H]
\begin{center}
\includegraphics[width=6.5in]{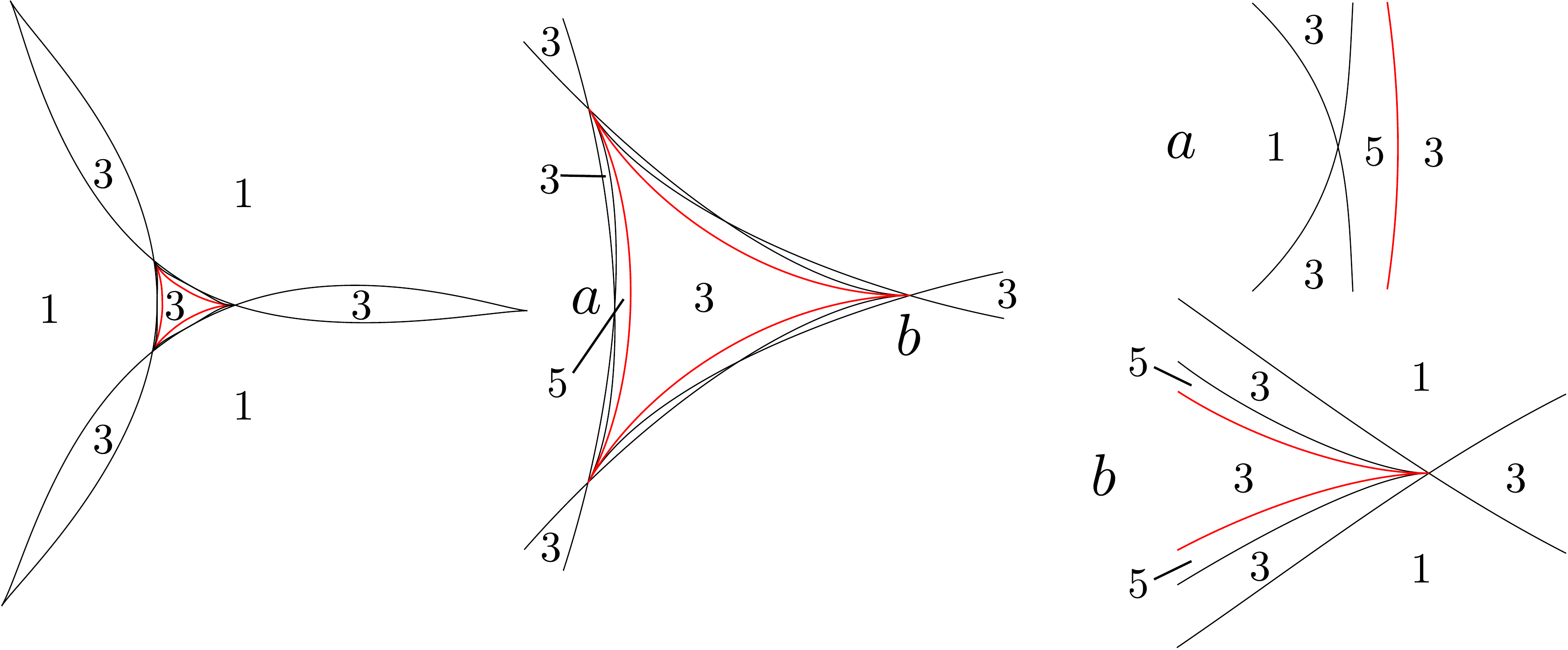}
\end{center}
\vspace*{-0.8cm}
\caption{\small The regions of the $\beta$-plane corresponding to different numbers 1,3,5 of real
roots of the quintic form (\ref{eq:betaquintic}).  The 3-cusped red curve is the discriminant of the cubic form
(\ref{eq:betacubic}) which forms part of the discriminant of (\ref{eq:betaquintic}). The middle
diagram is an enlargement of the central area of the left-hand diagram and the details of two 
small areas a and b of
the middle diagram are on the right. The region inside the red curve corresponds to $D_4^-$ and
all the rest of the diagram to $D_4^+$.}
\label{fig:umbilic-plot3}
\end{figure}

\noindent
{\bf Notation.}
In the following figures  the thin black curve is the parabolic curve, and the arrows show the direction of
increasing absolute {\color{black}radius} of the 5-point contact circle along the V-curve and  `Min' refers to a minimum
of this radius. In some figures, there 
are also 6-point contact points (bi-vertices) labelled $B$. The various thick lines are the branches of the V-curve;
see Remark~\ref{note:blue-red} for further details.
\begin{figure}[H]
\begin{center}
\includegraphics[width=6in]{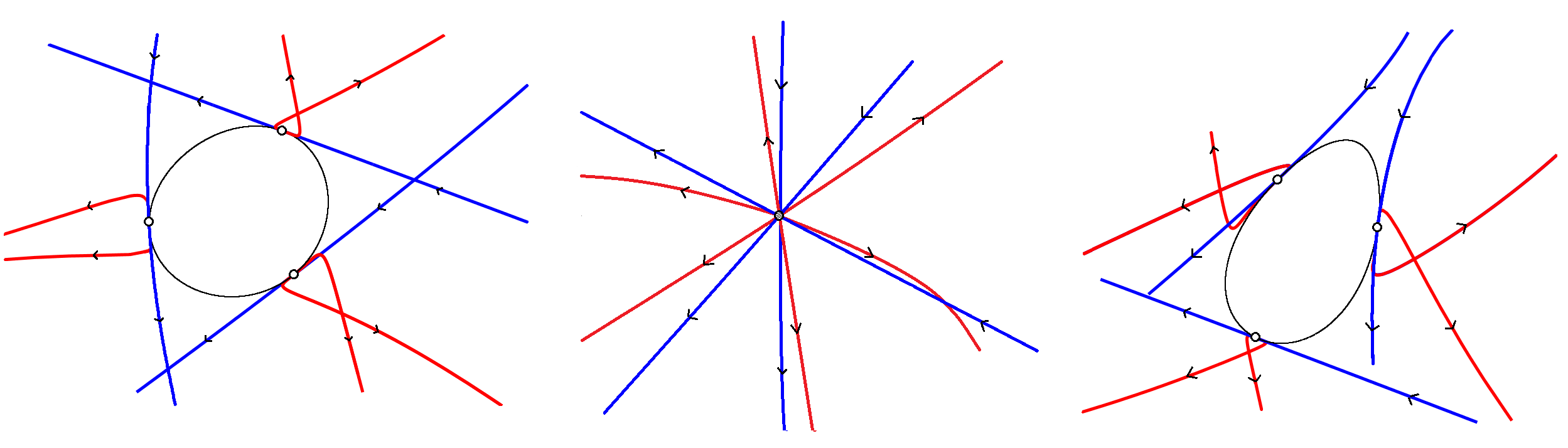}
\end{center}
\vspace*{-0.8cm}
\caption{\small $D_4^-$ case.  The surface $M_0$ in this example is 
$z= x^3-xy^2-\frac{4}{5}x^2y+\frac{1}{7}y^4$
 and the family $M_t$ is obtained by adding 
a small term $tx^2$.  For $M_0$ the blue curve is a branch of the
V-curve along which the radius of the 5-point contact circle is nonzero.  On the red curves near each cusp of Gauss there is a local minimum of radius and also a bi-vertex which are not shown.  The nearby flecnodal curves are also not included. }
\label{fig:D4minus} 
\end{figure}

\begin{figure}[H]
\begin{center}
\includegraphics[width=6in]{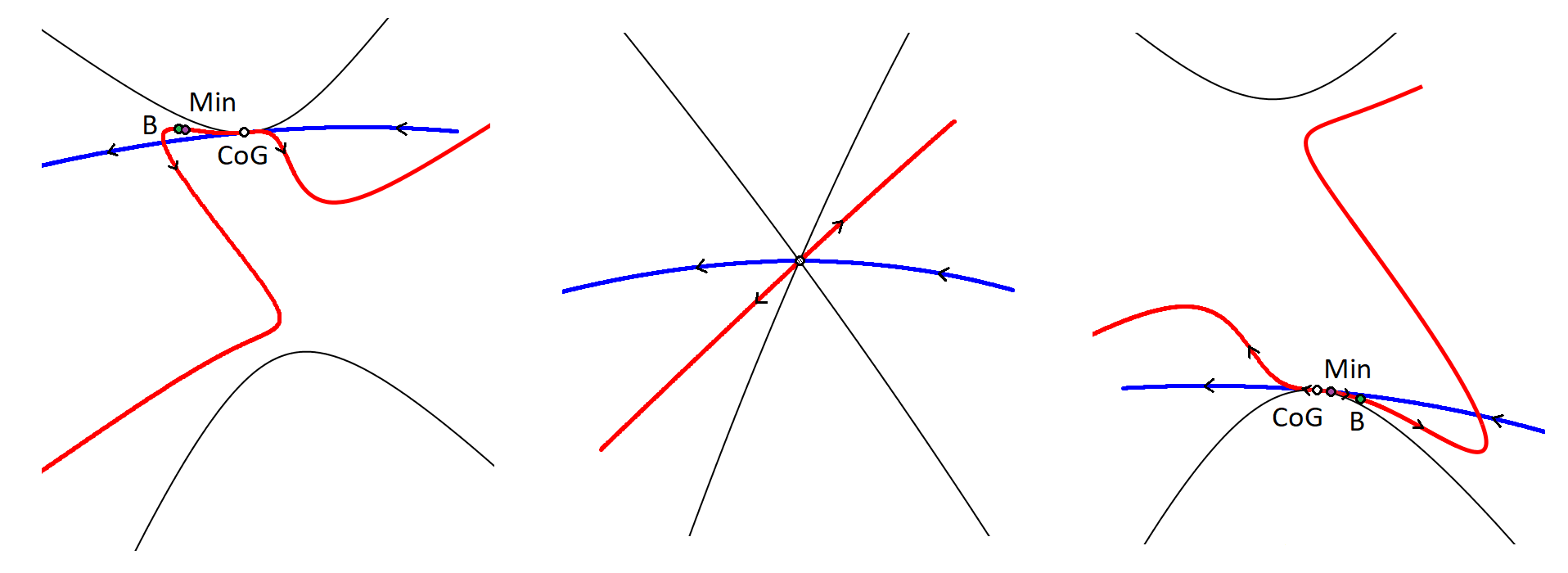}
\end{center}
\vspace*{-0.8cm}
\caption{\small $D_4^+$ case with one real root of the quintic form (\ref{eq:betaquintic}), 
in a family of surfaces $M_t$ where $t=0$ is the middle diagram.  For $M_0$
the blue curve is a branch of the
V-curve along which the radius of the 5-point contact circle is nonzero; the  other branch has this
radius with limit 0 at the crossing.   In this figure, the example surface $M_0$ chosen is  $z=x^3+xy^2 - \frac{4}{5}x^2y+\frac{1}{7}y^4$, 
and the family $M_t$ is obtained by adding a small term $t x^2$.}
\label{fig:D4plus1branch}
\end{figure}

\begin{figure}[H]
\begin{center}
\includegraphics[width=6in]{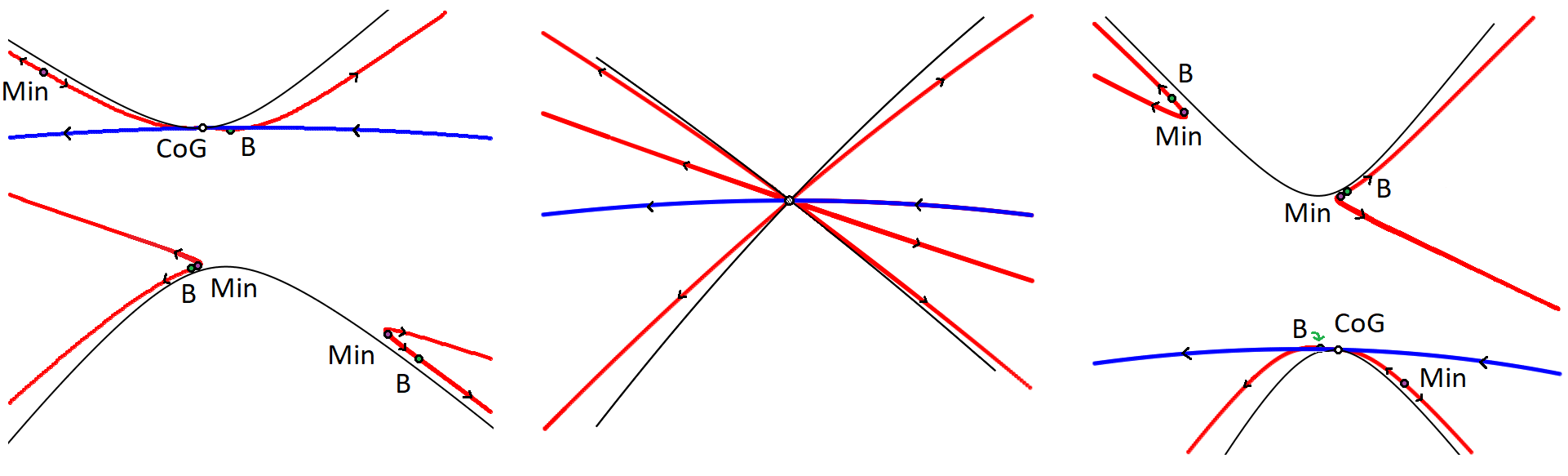}
\end{center}
\vspace*{-0.8cm}
\caption{\small $D_4^+$ case with three real roots of the quintic form (\ref{eq:betaquintic}), 
in a family of surfaces $M_t$ where $t=0$ is the middle diagram.  For $M_0$
the blue curve is the branch of the
V-curve along which the radius of the 5-point contact circle is nonzero; the three other branches all have this
radius with limit 0 at the crossing.  The surface   $M_t$  in this example is $z=x^2y + xy^2 + 4y^3 + 
\frac{1}{7}x^4 + tx^2.$ }
\label{fig:D4plus3branches}
\end{figure}

\begin{figure}[H]
\begin{center}
\includegraphics[width=6in]{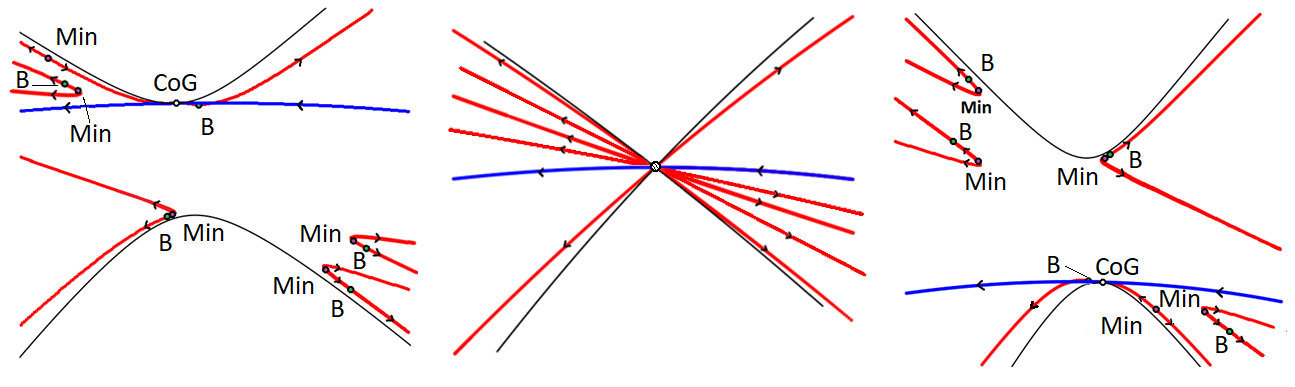}
\end{center}
\vspace*{-0.8cm}
\caption{\small $D_4^+$ case with five real roots of the quintic form (\ref{eq:betaquintic}), 
in a family of surfaces $M_t$ where $t=0$ is the middle diagram. The blue curve for $M_0$ is again
the branch of the
V-curve along which the radius of the 5-point contact circle is nonzero. The family of surfaces used
in this example is
$z = -x^2y + \frac{9}{2}xy^2 - \frac{51}{10}y^3 + \frac{1}{7}x^4 + ty^2$. }
\label{fig:D4plus5branches}
\end{figure}

\subsection{Double cusp of Gauss (bigodron)}\label{ss:degen-coG}

This case refers to higher ($A_4$) contact between $M_0$ and its tangent plane at the origin and the
Monge form of $M_0$ takes the form 
\begin{equation}
z=y^2-\sigma x^2y+b_2xy^2+b_3y^3+\qu\sigma^2x^4 + \ldots,
\label{eq:MongeA4}
\end{equation}
where $\sigma^2b_2+2\sigma c_1+4d_0\ne 0$.  In this case circles in the plane $z=0$
having 5-point contact with $M_0$ at the origin must have their centres on the $y$-axis
but there is only {\em one} solution to the position of the centre, namely $(0,\frac{1}{\sigma},0)$.
(Recall that $\sigma>0$.)
We shall see that this coincidence of solutions gives a smooth branch of the V-curve and also
a singular branch.

The parabolic curve has the form $y=\ha\sigma x^2+\ldots$; locally, the hyperbolic region is parametrized by $\{(x,y):y>0\}$.

Writing down the equations for the V-curve as usual we find in this case that $x_0$ and $y_0$ can
be expressed in terms of $u$ and $V=v-\frac{1}{\sigma}$ and that in the $(u,V)$-plane there is a locus
whose lowest terms take the form
\[0= \sigma^3uV+\frac{\sigma(6\sigma^3b_2^2+17\sigma^2b_2c_1+40\sigma b_2d_0+6\sigma c_1^2+20c_1d_0)}
{5(\sigma^2b_2+2\sigma c_1+4d_0)}u^2.\]
The denominator of the fraction is nonzero by the assumption of exactly $A_4$ contact of $M_0$ with
the plane $z=0$.  Thus there are two branches to the locus in the $(u,V)$-plane.  One of them
has the form $u=\mbox{constant}\times V + \ldots$ and the other $u=\mbox{constant}\times V^3+\ldots$, with in fact no term in $V^2$.
The first of these leads to a `parabola' component of the V-curve, of the form 
$y_0=\mbox{constant}\times x_0^2 + \ldots$ but the other gives a V-curve whose initial terms,
parametrized by $V$, are
\begin{eqnarray*}
x_0&=&-\frac{\sigma^4}{5(\sigma^2b_2+2\sigma c_1+4d_0)}V^2, \\ 
y_0&=&\frac{\sigma^9}{50(\sigma^2b_2+2\sigma c_1+4d_0)^2}V^4
-\frac{2\sigma^{10}(\sigma^2b_2+6\sigma c_1+20d_0)}{125(\sigma^2b_2+2\sigma c_1+4d_0)^3}V^5
\end{eqnarray*}
Of course, both components lie in the hyperbolic region of $M_0$.
\begin{remark*}
{\rm
Using $\mathcal{A}$-equivalence, the above singularity is not in fact equivalent to the
standard `rhamphoid cusp'
$(t^2, t^5)$ but to $(t^2, t^7)$.  Of course this equivalence is not Euclidean invariant. 
}
\end{remark*}
\begin{figure}[H]
\begin{center}
\includegraphics[width=6in]{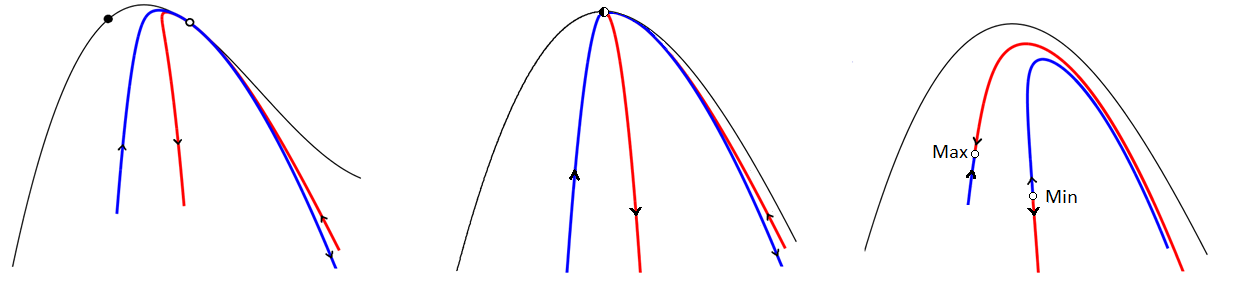}
\end{center}
\vspace*{-0.8cm}
\caption{\small $A_4$ transition on the V-curve.  The surface $M_0$ is
$z=y^2+x^2y+\frac{1}{4}x^4-\frac{1}{5}x^5-x^2y^2+2x^3y+3xy^3$, and the family
is given by adding a small multiple of $xy$. The thin black line is the parabolic curve and the thick coloured lines are the two branches of the V-curve; in the left-hand
diagram the colouring of the branches is in accordance with
 Remark~\ref{note:blue-red} and then
these branches are followed through the transition in the other two diagrams.  
On $M_0$ one branch has a `rhamphoid cusp' (the two close together curves, one red and one blue, on the right-hand side of the middle diagram) and the other branch is smooth. As before the arrows indicate the direction of increasing absolute radius of the 5-point contact circle. On the left figure the filled in circle is an elliptic cusp of Gauss and the white circle is a hyperbolic cusp of Gauss.  The flecnodal curve is not shown.}
\label{fig:A4}
\end{figure}

\subsection{Singular V-curve}
This is the special case where both components of the vector (\ref{eq:singV}) are zero.  
In that situation generically the V-curve will have a nondegenerate quadratic
form for its 2-jet, corresponding to the component of the intersection $M_0\cap T_pM_0$ 
tangent to the $x$-axis in the hyperbolic case.  Thus the V-curve
will have an unstable crossing or isolated point. 

In the case of a crossing this can be interpreted as saying the following.  
Corresponding to {\em one} of the branches of the intersection $M_0\cap T_pM_0$, 
having a $5$-point contact circle tangent to this branch, there are {\em two} distinct 
directions in which $p\in M_0$ can move away from the origin and still have a 
$5$-point contact circle tangent to the intersection of $M_0$ with its tangent plane at $p$. 
 
 In the case of an isolated point, which in the family $M_t$ will open out into a closed loop,
 this is a way in which the V-curve can acquire a single left or right loop, 
 in contrast to the situation depicted in Figure~\ref{fig:singular-parabolic-transition} 
 where {\color{black}two loops} appear on the V-curve.

This is not the same situation as {\color{black}a $V$-crossing}. There, {\em each} branch
of $M\cap T_pM$ contributes a smooth branch {\color{black}(one left and one right)} of the V-curve,  
whose crossing is stable under small perturbations of $M$. 

\subsection{Flecgodrons: the coincidence of a cusp of Gauss and a biflecnode}\label{ss:flecgodron}
At a simple cusp of Gauss $g$ with $\rho=0$ ($c_0=0$), exactly one value of $r$ (in Lemma\,\ref{prop:rs}) is zero. 
This means that the corresponding `circle' is a straight line having 
$5$-point contact with $M$ at $g$ (exactly $5$-point contact 
requires $d_0\neq 0$). Thus $g$ is a cusp of Gauss and is also a biflecnode. 
\medskip

\noindent
\textbf{\small Flecgodron}. A simple cusp of Gauss at which the asymptotic tangent line 
and the surface $M$ have $5$-point contact is called {\em a flec-godron}. 
\medskip

A surface in general position has no flecgodron\,: under any small generic deformation of 
a surface $M$ having a flecgodron the condition $\rho=0$ is destroyed. 
Perturbing $M$ inside a generic $1$-parameter family of surfaces (Figure~\ref{fig:flecgodron-transition}) 
$\{M_t\}$, there is an isolated 
parameter value $t_0$ (near $0$) whose corresponding surface $M_{t_0}$ has a simple flecgodron. 

\begin{figure}[h]
\centerline{\includegraphics[width=5.4in]{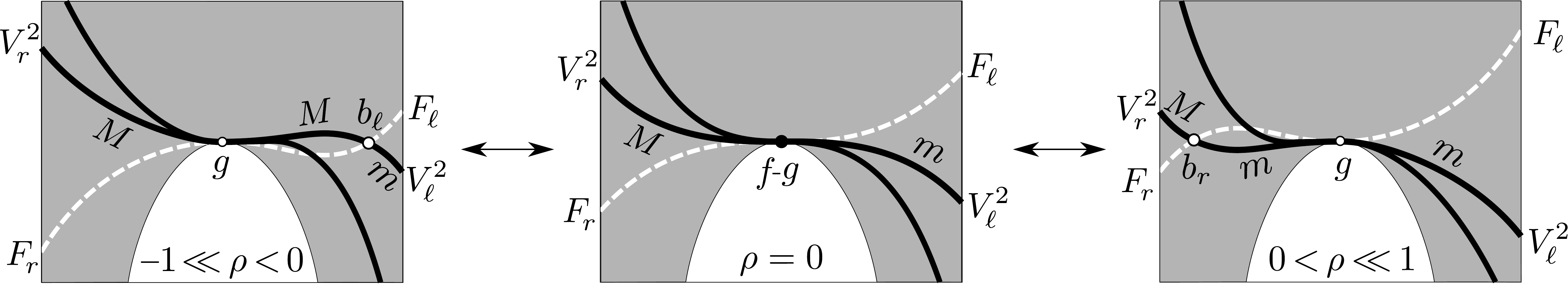}}
\caption{\small A flecgodron transition. }
\label{fig:flecgodron-transition}
\end{figure}

In Figure~\ref{fig:flecgodron-transition}, a left biflecnode $b_\ell$ approaches $g$
as $\rho\to 0^-$, it coincides with $g$ when $\rho=0$ and, as $\rho$ is growing, this point 
leaves $g$ as a right biflecnode $b_r$. By Remark\,\ref{left-right-orient-F} 
and Proposition~\ref{prop:left-righ_Vcurve-at-g}, a biflecnode near $g$ 
is the intersection point of the flecnodal curve $F$ with the tangential component $V^2$ 
of the V-curve ($V^2$ has the same local orientation right-to-left as $F$ near $g$). 

\section{Further investigations}\label{s:further}
In \S\ref{s:cod1} we have formally investigated the `transitional moment' $M_0$ of the families
studied, and explained the transitions by means of examples, but we reserve for further work a formal classification of the families themselves, including the behaviour of the V-curve relative to the flecnodal
curve during these transitions. Global results about the V-curve on a compact surface
are also for further investigation. We do not know whether there are interesting
affinely invariant generalizations, say to
contact of surfaces with conics in their tangent planes. There are also  questions
concerning the symmetry sets or medial axes of the families of curves obtained as plane
sections of a smooth surface parallel to the tangent plane (as mentioned in the Introduction);
these will be investigated elsewhere.

\medskip\noindent
{\sc Acknowledgements} The first and third authors acknowledge support from
the Research Centre in Mathematics and 
Modelling at the University of Liverpool (spring 2008) and by the
Engineering and Physical Sciences Research Council
(autumn 2008, grant number EP/G000786/1) when this problem was originally
studied. The second author acknowledges support from Liverpool Hope University to deliver a talk on this subject at the 6th International Workshop on Singularities in 
Generic Geometry and its Applications, Valencia, Oct 2019.
 The third author acknowledges support from 
Laboratory Solomon Lefschetz UMI2001 CNRS, 
Universidad Nacional Autonoma de M\'{e}xico.



\begin{thebibliography}{99}
\bibitem{BGM} T.Banchoff, T.Gaffney and C.McCrory, {\em Cusps of
Gauss Mappings}, Pitman Advanced Publishing Program 1982.
\bibitem{Bruce} J.W.Bruce, `Lines, circles, focal and symmetry sets', {\em Math.\ Proc.\ Cambridge Philos.\
Soc.}\ 118 (1995), 411--436.
\bibitem{BGsymmsets}J.W.Bruce and P.J.Giblin `Growth, motion and one-parameter families of symmetry sets', {\em Proc. Royal Soc. Edinburgh} 104A (1986), 179--204. 
\bibitem{BGT1} J.W.Bruce, P.J.Giblin and F.Tari, `Parabolic curves of evolving surfaces,'
{\em Int. J. Computer Vision} 17 (1996), 291--306.
\bibitem{BGT2} J.W.Bruce, P.J.Giblin and F.Tari, `Families of surfaces: height functions, Gauss maps and duals', in {\em
Real and Complex Singularities, W.L.Marar(ed.), Pitman Research Notes in Mathematics}, Vol. 333 (1995), 148--178.
\bibitem{BGT3} J.W.Bruce, P.J.Giblin and F.Tari, `Families of surfaces: focal sets, ridges and umbilics', {\em Math. Proc. Camb. Phil. Soc.} 125 (1999), 243-268. 
\bibitem{diatta-giblin} A.Diatta and P.J.Giblin, `Vertices and inflexions of plane sections of surfaces in $\mathbb R^3$', {\em Trends in Mathematics, Real
and Complex Singularities}, Birkh\"{a}user (2006), 71--97.
\bibitem{dimca-gibson} A.Dimca and C.G.Gibson, `On contact germs from the plane to the plane',
{\em Proc.\ Symp.\ in Pure Math.\ } 40.1 (1983), 277--282.
\bibitem{Fukui} T. Fukui, M. Hasegawa and K. Nakagawa, `Contact of a regular surface in Euclidean 3-space with cylinders and cubic binary differential equations',   {\em J. Math. Soc. Japan} 69 (2017), 819-847.
\bibitem{Mumford} P.L.Hallinan, G.G.Gordon, A.L.Yuille, P.Giblin and D.Mumford, {\em Two-and Three-Dimensional Patterns of the Face}, A.K.Peters 1999. 
\bibitem{Shyuichi-et-al} S.Izumiya, M. del C. Romero-Fuster, M.A.S.Ruas and
F.Tari {\em Differential Geometry from a Singularity Theory Viewpoint}, 
World Scientific Pub.\ Co.\ 2015.
\bibitem{Kazarian-Uribe} M. Kazarian, R. Uribe-Vargas,
 `Characteristic Points, Fundamental Cubic Form
    and Euler Characteristic of Projective Surfaces',
{\em Moscow Math.\ J.}  20 (2020), 511--530. 
\bibitem{solidshape} J.J.Koenderink, {\em Solid Shape}, M.I.T. Press 1990.
\bibitem{Montaldi} James A. Montaldi, `On contact between submanifolds', {\em Michigan Math.\ J.}
33 (1986), 195--199.
\bibitem{Montaldi2} James A. Montaldi, `Surfaces in 3-space and their contact with circles', {\em J. Diff.\ Geom.} 23 (1986), 109--126.
\bibitem{Porteous} Ian R. Porteous, {\em Geometric differentiation}, Cambridge University Press, second edition 2001.
\bibitem{ricardo} R. Uribe-Vargas, `A Projective Invariant for Swallowtails and Godrons,
and Global Theorems on the Flecnodal Curve',
{\em Moscow Math.\ J.} \textbf{6} (2006) 731--768. 


\bibitem {Uribeevolution} R. Uribe-Vargas, {\em Surface Evolution, Implicit 
Differential Equations and Pairs of Legendrian Fibrations}, Preprint (2002). 
An improved version has been submitted for publication (2020). 

\bibitem {Uribeinvariant} R. Uribe-Vargas,
{\em On Projective Umbilics: a Geometric Invariant and an Index}.
Journal of Singularities \textbf{17}, Worldwide Center of Mathematics, 
LLC (2018) 81-90. DOI 10.5427/jsing.2018.17e

\medskip\noindent
Peter Giblin, Department of Mathematical Sciences, The University of Liverpool,
Liverpool L69 7ZL, England. Email pjgiblin@liv.ac.uk

\smallskip\noindent
Graham Reeve,  Department of Mathematics and Computer Science, Liverpool Hope University, Liverpol L16 9JD, UK, email reeveg@hope.ac.uk

\smallskip\noindent
Ricardo Uribe-Vargas, Institut de Mathématiques de Bourgogne, UMR 5584, CNRS, 
Université Bourgogne Franche-Comté, F-21000 Dijon, France. 


\end{thebibliography}
\end{document}